\newtheorem*{Th*}{Theorem}
\newtheorem{Th}{Theorem}[section]
\newtheorem{Prop}{Proposition}[section]   
\newtheorem{Lem}{Lemma}[section]   
\newtheorem{Coro}{Corollary}[section]   
\newtheorem{Rem}{Remark}[section]
\newcommand{\R}{\mathbb{R}}
\newcommand{\Z}{\mathbb{Z}}
\newcommand{\C}{\mathbb{C}}
\newcommand{\T}{\mathbb{T}}
\newcommand{\LL}{\mathcal{L}}
\newcommand{\hu}{{\widehat u}}
\newcommand{\h}{\mathfrak{h}}
\newcommand{\1}{\langle} 
\newcommand{\2}{\rangle} 
\newcommand{\tb}[2]{\genfrac{}{}{0pt}{}{#1}{#2}} 
\newcommand{\sign}{\mathop{\rm sign}}
\newcommand{\re}{\mathop{\rm Re}}
\newcommand{\Wert}{\mathop{\rm Vert}\nolimits}
\newcommand{\bcdot}{\boldsymbol{\cdot}}
\begin{document}

\title{On the analyticity of the nonlinear Fourier transform of the Benjamin-Ono equation on $\T$}   
 
\author{P. G\'erard, T. Kappeler\footnote{T.K.  partially supported by the Swiss National Science Foundation.} 
\,and P. Topalov\footnote{P.T.  partially supported by the Simons Foundation,  Award \#526907.}}
  
\maketitle

\begin{abstract}  
We  prove that  the nonlinear Fourier transform of the Benjamin-Ono equation on $\T$, also referred to as Birkhoff map,
is a real analytic diffeomorphism from the scale of Sobolev spaces $H^{s}_{0}(\T, \R)$, $s > -1/2$, to the scale of weighted $\ell^2-$sequence spaces, 
$\h^{s +1/2}_{r,0}(\mathbb N, \C)$, $s >-1/2$. 
As an application we show that for any $-1/2<s<0$, the flow map of the Benjamin-Ono equation 
$\mathcal{S}_0^t : H^{s}_{0}(\T, \R)\to H^{s}_{0}(\T, \R)$ is {\em nowhere locally uniformly continuous} in 
 $H^{s}_{0}(\T, \R)$.
\end{abstract}    

\smallskip

\noindent{\small\em Keywords}: {\small Benjamin--Ono equation, nonlinear Fourier transform,
real analytic, well-posedness, solution map, 
no\-where locally uniformly continuous maps}

\smallskip

\noindent
{\small\em 2020 MSC}: {\small  37K15 primary, 47B35 secondary}

\tableofcontents

\section{Introduction}\label{Introduction}
In this paper we study the Benjamin-Ono equation on the torus $\T := \R/2\pi\Z$,
\begin{equation}\label{eq:BO}
\partial_t u=H\big(\partial_x^2u\big)-2 u\partial_xu, 
\end{equation}
where $u\equiv u(x,t)$, $x\in\T$, $t\in\R$, is real valued and $H$ denotes the Hilbert transform
$H : H^\beta_c\to H^\beta_c$, $\beta\in\R$,
\[
\sum_{n\in\Z}\widehat{v}(n) e^{i n x}\mapsto -i\sum_{n\in\Z}\sign(n)\widehat{v}(n) e^{i n x},
\]
with $\sign(n)$ being defined as $\sign(\pm n)=\pm 1$ for $n> 0$, $\sign(0)=0$, and $\widehat{v}(n)$, $n\in\Z$, being
the Fourier coefficients of the element $v$ in the Sobolev space $H^\beta_c\equiv H^\beta(\T,\C)$. 
Equation \eqref{eq:BO} was introduced by Benjamin \cite{Ben1967} and Davis $\&$ Acrivos \cite{DA1967} as 
a model for an asymptotic regime of internal gravity waves at the interface of two fluids. 
It is well known that \eqref{eq:BO} possesses a Lax pair representation (cf. \cite{Na1979}) 
and that by the use of the Gardner bracket, it can be written in Hamiltonian form.
Equation \eqref{eq:BO} has been extensively studied, in particular its wellposedness in Sobolev spaces
- see \cite{GKT1}, \cite{Saut2019} and references therein. 

Using the Lax pair representation of \eqref{eq:BO}, it was  proved in \cite{GK,GKT1} that 
the Benjamin-Ono equation on $\T$ admits a nonlinear Fourier transform, also referred to as Birkhoff map. More precisely, according to
\cite[Theorem 1]{GK} and \cite[Theorem 6, Proposition 5]{GKT1}, there exists a map
\begin{equation}\label{eq:Phi-introduction}
\Phi : \bigsqcup_{s > -1/2} H^s_{r,0} \to  \bigsqcup_{s > -1/2} h_{r,0}^{s+1/2}, \ 
u \mapsto \Phi(u) = (\Phi_n(u))_{n \ne 0} \, , 
\end{equation}
$$
\Phi_{-n}(u) =  \overline{ \Phi_n(u)} , \qquad \forall \, n \ge 1 ,
$$
so that for any $s > -\frac12$,  
$\Phi : H^s_{r,0} \to h^{s+1/2}_{r,0}$ is a homeomorphism,
which transforms the trajectories of \eqref{eq:BO} into straight lines. Furthermore,
these trajectories evolve on the isospectral sets of potentials
of the Lax operator (see \eqref{eq:L} below).
Here $H^\beta_r\equiv H^\beta(\T,\R)$, $\beta\in\R$, is the Sobolev space with exponent $\beta$ of real valued functions
on the torus,
\[
H^\beta_{r,0}:= H^\beta_r \cap H^\beta_{c,0}, \qquad
H^\beta_{c,0}:=\big\{u\in H^\beta_c\,\big| \, \hu(0)=0\big\} ,
\]
and
\begin{equation}\label{eq:h_{r,0}}
\h^\beta_{r,0}:=\big\{z\in\h^\beta_{c, 0}\,\big| \ z_{-n}=\overline{z}_n \ \forall n\ge 1\big\}
\end{equation}
is a real subspace of the complex weighted $\ell^2$-sequence space 
\begin{equation}\label{eq:h_{c,0}}
\h^\beta_{c,0} =\big\{ (z_n)_{n\ne 0}\,\big| \,
\sum_{n\ne 0} |n|^{2\beta}|z_n|^2 < \infty\big\} .
\end{equation}
The map $\Phi$ can be considered as a nonlinear Fourier transform since it allows to solve \eqref{eq:BO} by quadrature. 
In particular, we used it to prove
that for any $-1/2<s<0$,  \eqref{eq:BO} is globally $C^0$-wellposed in $H^{s}_{r,0}$ (\cite[Theorem 1]{GKT1})
and illposed in $H^{s}_{r,0}$ for $s \le -1/2$ (\cite[Theorem 2]{GKT1}),
 improving previously known wellposedness results (see \cite{Mo, MoP, AH}). 
In addition, the Birkhoff map has been applied to show
the almost periodicity of the solutions of \eqref{eq:BO} and the orbital stability of the traveling waves 
of the Benjamin-Ono equation on $\T$ (see \cite[Theorem 3 and Theorem 4]{GKT1}).
We refer to the components $\Phi_n(u)$, $n \ne 0$, of $\Phi$ as Birkhoff coordinates.

The main goal of the present paper is to prove the following theorem.

\begin{Th}\label{th:Phi}
For any $s>-1/2$, the nonlinear Fourier transform of the Benjamin-Ono equation
\[
\Phi : H^{s}_{r,0}\to\h^{s + 1/2}_{r,0},
\]
introduced in \cite{GK,GKT1}, is a real analytic diffeomorphism.
\end{Th}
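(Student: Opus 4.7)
The plan is to promote the known homeomorphism property of $\Phi$ to real analyticity in two stages: first, extend $\Phi$ to a holomorphic map on a complex neighborhood $\U_s$ of $H^{s}_{r,0}$ in $H^{s}_{c,0}$; second, deduce analyticity of the inverse from the analytic inverse function theorem. Recall from \cite{GK,GKT1} that each Birkhoff coordinate $\Phi_n(u)$ is built from the spectral data of the Lax operator $L_u = D - T_u$ acting on the Hardy space $L^2_+(\T)$, where $T_u$ denotes the Toeplitz operator with symbol $u$. Since $u \mapsto L_u$ is affine, the family $\{L_u\}_{u \in \U_s}$ is holomorphic in the sense of Kato as soon as $\U_s$ is chosen small enough that $T_u$ remains a controlled perturbation of $D$.

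The first step is to verify that on a sufficiently small complex neighborhood $\U_s$ of $H^{s}_{r,0}$ the operator $L_u$ retains a compact resolvent, has simple discrete spectrum $\{\lambda_n(u)\}_{n \ge 0}$ close to $\spec(D) = \{0,1,2,\ldots\}$, and that its eigenvalues and spectral projectors $P_n(u)$ depend locally holomorphically on $u$. Kato's analytic perturbation theory then yields, locally, a holomorphic family of normalized eigenfunctions $f_n(u)$, from which $\Phi_n(u)$ is obtained by combining $\lambda_n(u)$, a suitable scalar coefficient of $f_n(u)$, and residues of the generating function $\lambda \mapsto \langle (L_u - \lambda)^{-1} 1, 1\rangle$. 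Each individual component $\Phi_n$ is thus holomorphic on $\U_s$.

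The main obstacle is to show that these componentwise holomorphic functions assemble into a holomorphic map valued in $\h^{s+1/2}_{c,0}$, i.e., that the series $\sum_{n\ne 0}|n|^{2s+1}|\Phi_n(u)|^2$ converges locally uniformly on $\U_s$. This is delicate precisely when $s$ is close to $-1/2$, where the trace identities and sum rules used in \cite{GK,GKT1} to bound the $\h^{s+1/2}_{r,0}$-norm are at their sharp threshold. The strategy is to complexify those sum rules, verify that they extend to $\U_s$ with bounds uniform on compact sets, and use the resulting locally uniform $\h^{s+1/2}_{c,0}$-estimates to upgrade the componentwise holomorphy of $\Phi$ to holomorphy as a Banach-space-valued map. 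A weak-holomorphy argument together with this local boundedness then gives holomorphy in the strong sense.

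Once the holomorphic extension is in place, $\Phi : H^{s}_{r,0} \to \h^{s+1/2}_{r,0}$ is real analytic by restriction to the real form. For $\Phi^{-1}$, one computes $d\Phi(u)$ from the same spectral formulas and shows it is a bounded linear isomorphism by identifying the images of the Fourier basis with a basis of $\h^{s+1/2}_{r,0}$ that is topologically equivalent to the standard one; combined with the bijectivity of $\Phi$ established in \cite{GK,GKT1}, the analytic inverse function theorem applied pointwise then yields real analyticity of $\Phi^{-1}$ and completes the proof.
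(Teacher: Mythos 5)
The overall scaffolding you propose (complexify, show each $\Phi_n$ is holomorphic via perturbation theory, assemble into a Banach-space-valued holomorphic map, then invoke the analytic inverse function theorem) matches the paper at the level of strategy, but the two central technical steps are not correctly identified and the step you suggest to fill them would not work.

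First, the assembly step. You propose to obtain locally uniform bounds on $\sum_{n\ne0}|n|^{2s+1}|\Phi_n(u)|^2$ by ``complexifying the sum rules'' of \cite{GK,GKT1}. Those sum rules are \emph{equalities} (trace formulas, $|\Phi_n|^2=\gamma_n\kappa_n$, $\sum_n\gamma_n=|\lambda_0|$) whose usefulness for $u\in H^s_{r,0}$ comes from the positivity $\gamma_n(u)\ge0$, $\kappa_n(u)>0$. After complexification these become identities between complex numbers and no longer control $|\Phi_n(u)|^2=\Phi_n(u)\overline{\Phi_n(u)}$, since $\Phi_{-n}(u)\ne\overline{\Phi_n(u)}$ for $u\in H^s_{c,0}\setminus H^s_{r,0}$ and the positivity is lost. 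This is precisely the ``main difficulty'' the paper flags, and it is resolved by an entirely different mechanism: one picks the finite gap potential $w_N\in\mathcal U_N$ with $\Phi_n(w_N)=\Phi_n(w)$ for $n\le N$ and $\Phi_n(w_N)=0$ for $n>N$, exploits that $f_n(w_N)=g_\infty(w_N)e^{inx}$ for $n\ge N$ is a perturbed Fourier basis, and then expands the Riesz projectors $P_n(u)$ in a Neumann series around $w_N$. The uniform $\h^{s+1}$-summability of the resulting pre-Birkhoff coordinates $\langle h_n(u)|1\rangle$ is extracted from a careful combinatorial estimate (Lemma \ref{key lemma}), and passing from the pre-Birkhoff map to $\Phi$ itself requires the $\ell^1$-summability of $(\delta_n(u))_n$, proved via a Vanishing Lemma (Proposition \ref{prop:delta_n-analyticity}, Section \ref{sec:the_delta_map}). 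None of this is a complexification of a sum rule, and the sum-rule route genuinely fails for the reason above.

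Second, the invertibility of $d_u\Phi$. You say one identifies the images of the Fourier basis with a basis of $\h^{s+1/2}_{r,0}$ ``topologically equivalent to the standard one,'' but for a general $u\in H^s_{r,0}$ the differential $d_u\Phi$ is not close to a diagonal operator in the Fourier basis and this identification is not available. What actually works is twofold. (a) The leading part of $d_u\Phi$ is (up to sign and weight) the map $v\mapsto(\langle n\rangle^{-1/2}\widehat{v\overline g_\infty}(n))_{n\ge1}$, which decomposes into a Toeplitz operator $T^{(1)}_{\overline g_\infty}$ with unimodular symbol (invertible, by a classical argument) plus a compact Hankel piece; the remaining part $A(u)$ is shown to be compact by approximating $u$ in $\cup_N\mathcal U_N$. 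This yields Fredholm index $0$ (Lemma \ref{Lemma 1}). (b) Surjectivity is obtained from the canonical Poisson relations $\{\Phi_n,\overline{\Phi_k}\}=-i\delta_{nk}$, which must first be extended to $-1/2<s\le0$ (Proposition \ref{canonical relations}); these make the transpose $(d_u\Phi)^\top$ injective, hence $d_u\Phi$ is onto, and Fredholm index $0$ upgrades this to a linear isomorphism (Lemma \ref{Lemma 2}). Neither the Toeplitz/Hankel decomposition nor the role of the canonical relations appears in your proposal; without them the claim that $d_u\Phi$ is an isomorphism is an assertion, not an argument.
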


Important applications of Theorem \ref{th:Phi} concern the study of perturbations of \eqref{eq:BO} 
(cf.  \cite{KP-book}, \cite{Kuk} (KdV equation), \cite{BKM} and references therein (NLS equation))
and the study of regularity properties of the solution map of  \eqref{eq:BO}. 

In this paper, we apply Theorem \ref{th:Phi} to prove the following result on the regularity of the solution map
of \eqref{eq:BO}. To state it we first have to introduce some more notation. Assume that $s> -1/2$.
For $u_0 \in H^{s}_{r, 0}$, denote by  $t \mapsto u(t) \equiv u(t, u_0)$ the solution of \eqref{eq:BO} 
with initial data $u_0$, constructed in \cite{GKT1}. 
For given $t\in\R$ and $T>0$, consider the {\em flow map}
\[
\mathcal{S}_0^t : H^{s}_{r, 0}\to H^{s}_{r, 0},\quad u_0\mapsto u(t, u_0),
\]
and the {\em solution map}
\[
\mathcal{S}_{0,T} : H^{s}_{r, 0}\to C\big([-T, T], H^{s}_{r, 0}\big),\quad u_0\mapsto u(\cdot, u_0)|_{[-T,T]},
\]
of \eqref{eq:BO}. 
A continuous map $F : X\to Y$ between two Banach spaces $X$ and $Y$ is said to be
{\em nowhere locally uniformly continuous} in an open neighborhood $U$ in $X$ if
the restriction $F\big|_V : V\to Y$ of $F$ to any open neighborhood 
 $V\subseteq U$ is {\em not} uniformly continuous. In a similar way one defines the notion of a {\em nowhere locally Lipschitz} map
in an open neighborhood $U\subseteq X$.

\begin{Th}\label{th:well-posedness}
\begin{itemize}
\item[(i)] For any $-1/2<s<0$ and $t\ne 0$, the flow map $\mathcal{S}_0^t : H^{s}_{r,0}\to H^{s}_{r,0}$
of the Benjamin-Ono equation \eqref{eq:BO} is nowhere locally uniformly continuous in $H^{s}_{r,0}$. 
In particular, $\mathcal{S}_0^t : H^{s}_{r,0}\to H^{s}_{r,0}$ is nowhere locally Lipschitz in $H^{s}_{r,0}$.
\item[(ii)] For any $s\ge 0$ and $T>0$, the solution map $\mathcal{S}_{0,T} : H^s_{r, 0}\to C\big([-T, T], H^s_{r, 0}\big)$ is real analytic.
\end{itemize}
\end{Th}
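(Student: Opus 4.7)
The plan is to use Theorem~\ref{th:Phi} to transfer both statements to Birkhoff coordinates. Under $\Phi$, the BO flow becomes the diagonal straight-line motion
\[
\Phi_n\bigl(\mathcal{S}_0^t(u_0)\bigr)=e^{-it\omega_n(I)}\,\Phi_n(u_0), \qquad n\ne 0,
\]
with conserved actions $I_n:=|\Phi_n(u_0)|^2$ and BO frequencies $\omega_n(I)$ computed explicitly in \cite{GK,GKT1}. Setting $E_T(z)(t):=(e^{-it\omega_n(|z|^2)}z_n)_{n\ne 0}$, one has the factorization $\mathcal{S}_{0,T}=(\Phi^{-1})_*\circ E_T\circ\Phi$, where $(\Phi^{-1})_*$ denotes post-composition with $\Phi^{-1}$ acting on curves.

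For part~(ii), Theorem~\ref{th:Phi} yields real analyticity of $\Phi$ and hence of $(\Phi^{-1})_*$, so it suffices to prove that $E_T:\h^{s+1/2}_{r,0}\to C([-T,T],\h^{s+1/2}_{r,0})$ is real analytic. For $s\ge 0$ the embedding $H^s\hookrightarrow L^2$ gives $\sum_{n\ge 1}I_n\le C\|u_0\|_{L^2}^2$; combined with the concrete form of $\omega_n(I)$, this lets one expand each component $z\mapsto e^{-it\omega_n(|z|^2)}z_n$ in a power series whose Taylor coefficients can be estimated uniformly for $t\in[-T,T]$ and on bounded subsets of $\h^{s+1/2}_{r,0}$, from which the joint analyticity of $E_T$ follows.

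For part~(i), the bi-real-analyticity of $\Phi$ reduces the statement to showing that $E_t$ is not uniformly continuous on any open $W\subseteq\h^{s+1/2}_{r,0}$, for $-1/2<s<0$ and $t\ne 0$. I would construct sequences $z^{(k)},\tilde z^{(k)}\in W$ that differ only at high Fourier modes of order $N_k\to\infty$, with action differences $\Delta_k:=|z^{(k)}_{N_k}|^2-|\tilde z^{(k)}_{N_k}|^2$ calibrated so that the resulting phase differences $t(\omega_n(I^{(k)})-\omega_n(\tilde I^{(k)}))$ are of order one on a band of indices, while $\|z^{(k)}-\tilde z^{(k)}\|_{\h^{s+1/2}}\to 0$. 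The condition $s+1/2<1/2$ is what makes these two requirements simultaneously achievable. A lower bound on $\|E_t(z^{(k)})-E_t(\tilde z^{(k)})\|_{\h^{s+1/2}}$ is then extracted from the dephasing of the Birkhoff coordinates on that band, using that these coordinates retain a non-negligible $\h^{s+1/2}$-mass of $z^{(k)}$.

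The main obstacle is part~(i): the construction must balance (a) a vanishing initial Birkhoff distance, (b) an $O(1)$ phase shift under the flow, and (c) a non-negligible contribution to the time-$t$ distance from the dephased modes. Reconciling these three constraints pins down the scaling of amplitudes and the spread of the perturbations and uses crucially that $s<0$; the quantitative dependence of $\omega_n$ on the actions from \cite{GK,GKT1} is essential here, and is also what drives the analyticity estimate in part~(ii).
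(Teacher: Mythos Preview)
Your proposal follows the same route as the paper: conjugate by the Birkhoff map using Theorem~\ref{th:Phi}, establish real analyticity of the diagonal flow $E_T$ on $\h^{s+1/2}_{r,0}$ for $s\ge 0$ via the explicit frequency formula (the paper packages this as Lemma~\ref{eq:Omega-analytic} and Corollary~\ref{coro:analyticity_of_S_B}), and for part~(i) reduce to the nowhere-local-uniform-continuity of the straight-line flow in Birkhoff coordinates, which the paper simply quotes as Lemma~\ref{lem:nowhere locally uniformly continuous} from \cite{GKT3} while you sketch the standard high-frequency dephasing construction yourself. The only detail you gloss over in~(ii) that the paper makes explicit is the compactness argument ensuring that the time-$[-T,T]$ orbit in Birkhoff coordinates stays inside a domain on which $\Phi^{-1}$ has an analytic extension, so that the pushforward $(\Phi^{-1})_*$ is well defined and analytic on curves (Lemma~\ref{lem:push-forward}).
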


\noindent{\bf Addendum to Theorem \ref{th:well-posedness}(ii).}\label{rem:improved_well-posedness}
{\em 
For any $k\ge 1$, $s>-1/2+2k$, and $T > 0$, the solution map
\[
\mathcal{S}_{0,T} : H^s_{r, 0}\to\bigcap_{j=0}^k C^j\big([-T, T], H^{s-2j}_{r, 0}\big)
\]
is well-defined and real analytic in $H^{s}_{r,0}$. 
}

\begin{Rem}
Item (i) of Theorem \ref{th:well-posedness} improves on the  result by Molinet in \cite[Theorem 1.2]{Mo},
saying that for any $s<0$, $t\in\R$, the flow map $\mathcal{S}^t_0$ (if it exists at all) is not of class 
$C^{1, \alpha}$ for any $\alpha>0$.
Item (ii)  of Theorem \ref{th:well-posedness} improves on the result by Molinet, saying that for any $s\ge 0$, $t\in\R$, 
the flow map $\mathcal S^t_0$ is real analytic near zero (\cite[Theorem 1.2]{Mo}, \cite{Mo1}).
\end{Rem}
\begin{Rem}
Any solution $u$ of \eqref{eq:BO} in $H^s_{r, 0}$, $s> -1/2$, constructed in \cite{GKT1}, has the property 
that for any $c\in\R$, $u(t, x - 2 ct) + c$ is again a solution with constant mean value $c$. 
It is straightforward to see that for any $c\in\mathbb R$, Theorem \ref{th:well-posedness} holds on the affine space 
$\big\{u\in H^s_r \,\big| \hu(0)=c\big\}$. 
We remark that the solution map of \eqref{eq:BO} is known to be nowhere locally
uniformly continuous on the Sobolev space $H^s_r$ for {\em any} $s > -1/2$. (A transparent proof
of this fact can be obtained using Birkhoff coordinates -- cf. \cite[Appendix A]{KT1}.)
\end{Rem}
\begin{Rem}
In \cite{GKT3}, we proved a local version of Theorem \ref{th:Phi} and Theorem \ref{th:well-posedness} near $0$. 
The version of Theorem \ref{th:Phi} near $0$ says that for any $s > -1/2$, 
there exists a neighborhood $U$ of $0$ in $H^s_{c,0}$ so that the Birkhoff map $\Phi : U \cap H^{s}_{r,0} \to \h^{s+1/2}_{r,0}$
extends to an analytic diffeomorphism $\Phi : U \to \h^{s+ 1/2}_{c,0}$ onto its image $\Phi(U)$. 
As an application it is shown in \cite{GKT3} that equation \eqref{eq:BO} admits an {\em analytic Birkhoff normal form near $0$}.
\end{Rem}

\noindent
{\em Ideas of the proof of Theorem \ref{th:Phi}.} 
The proof of Theorem \ref{th:Phi} 
 relies  on the approximation of any element in $H^{s}_{r,0}$ by finite gap potentials
and on properties of the spectrum of the Lax operator, associated to finite gap potentials.
To explain all this in more detail, let us first recall the construction of the Birkhoff map $\Phi : H^{s}_{r,0}\to\h^{s+1/2}_{r,0}$
as described in \cite{GK, GKT1}.
It is based on the Lax operator $L_w = -i\partial_x - T_w$, an unbounded operator on the Hardy space $H_+$. Here $H_+$ is defined in \eqref{eq:H_+} below
and $T_w$ denotes the Toeplitz operator with symbol $w\in  H^{s}_{r,0}$, $s > -1/2$. 
We refer to Section \ref{Lax operator} below for a review of the terminology and the results about this operator, established
in our previous papers \cite{GK}, \cite{GKT1}, and \cite{GKT2}.
At this point, we only mention that the spectrum of $L_w$ is discrete and consists of a sequence of simple, real  eigenvalues,
bounded from below, $\lambda_0(w) < \lambda_1(w) < \cdots$. In addition, the eigenvalues are separated from each other by a distance of at least one, 
$\gamma_n(w):= \lambda_n(w) - \lambda_{n-1}(w)  -1 \ge 0$ for any $n \ge 1$.
Associated to these eigenvalues are $L^2-$normalized  eigenfunctions $f_n \equiv f_n(w)$, which are uniquely determined by the normalization conditions
\begin{equation}\label{norm'zation f_n}
\sqrt[+]{\kappa_0(w)} := \1 f_0 | 1  \2 > 0, \qquad  \sqrt[+]{\mu_n(w)} := \1 e^{ix} f_{n-1} | f_n \2 > 0, \ \  \forall n \ge 1 .
\end{equation}
The components of the Birkhoff map $\Phi$ are then defined as 
\begin{equation}\label{def Birkhoff real}
\Phi_n(w) =\frac{ \1 1| f_n(w)  \2}{ \sqrt[+]{\kappa_n(w)}} , \qquad
\Phi_{-n}(w) =\overline{\Phi_n(w)} , \qquad \forall n \ge 1,
\end{equation}
where $\kappa_n(w) > 0$, $n \ge 1$, are scaling factors (cf. Section \ref{sec.normalized eigenfunctions}), determined by
\begin{equation}\label{kappa gamma}
| \1 1 | f_n(w) \2 |^2 = \gamma_n(w) \kappa_n(w) , \qquad \forall n  \ge 1.
\end{equation}
For $w = 0$, one has
\[
\lambda_n(0) = n , \qquad f_n(0) = e^{inx} , \qquad \kappa_n(0) = 1 , \qquad  \forall n \ge 0 ,
\]
and hence 
$$
\gamma_n(0) = 0 , \qquad \mu_n(0) = 1,  \qquad \forall n \ge 1.
$$
Note that $ f_n(0)$, $n \ge 0$, is the Fourier basis of the Hardy space $H_+$.
In order to prove Theorem \ref{th:Phi}, we need to show in a first step that for any $w \in H^s_{r,0}$, $s > -1/2$, 
$\Phi$ extends to an analytic map on a neighborhood of $w$ in $H^s_{c,0}$. To this end, we approximate
$w$ by a finite gap potential in $\mathcal U_N$ with $N \ge 1$ sufficiently large. We recall from \cite{GK} that an element $w \in H^s_{r,0}$ 
is said to be a finite gap potential if $ \{ n \ge 1 \, : \, \gamma_n(w) > 0 \}$ is finite and that for any $N \ge 1$, 
$\mathcal U_N$ is the set of finite gap potentials, satisfying $\gamma_N > 0$ and $\gamma_n = 0$ for any $n > N$.
It is shown in \cite{GK} that finite gap potentials are $C^\infty$-smooth and that $\cup_{N \ge 1} \mathcal U_N$ is dense in $H^s_{r,0}$
for any $s > -1/2$. Furthermore, for any $w \in \mathcal U_N$, $N \ge 1$, (cf. \cite{GK}, \cite[Appendix A]{GKT2})
\[
\lambda_n(w) = n , \qquad f_n(w) = g_\infty(w) e^{inx},  \qquad   \forall n \ge N ,
\]
where $g_\infty(w) := e^{i\partial^{-1}_x w}$. Hence up to finitely many eigenfunctions, $f_n(w)$, $n \ge 0$, is the Fourier basis of $H_+$, 
perturbed by $g_\infty(w)$. Note that $g_\infty(w)$ is a $C^\infty$-smooth function, does not depend on $n$, 
and takes values in the circle $\{ z \in \C \, : \, |z| =1 \}$.

For any given $w \ne 0$ in $H^s_{r,0}$, 
we choose $w_N \in \mathcal U_N$ with $N \ge 1$ sufficiently large,  to be the potential, uniquely determined by 
$$
\Phi_n(w_N) = \Phi_n(w), \quad \forall \, 1 \le n \le N, \qquad
\Phi_n(w_N) = 0, \quad \forall \, n > N.
$$ 
Uisng that $f_n(w_N)$, $n \ge 0$, is a perturbed Fourier basis of $H_+$ in the sense described above, we show that one can apply
perturbation theory to prove that $\Phi$ extends to an analytic map on a neighborhood of $w$ in $H^s_{c,0}$.

\smallskip
\noindent
{\em Outline of the proof of Theorem \ref{th:Phi}.}
Let us outline the main steps of the proof of Theorem \ref{th:Phi} in the case where $-1/2 < s \le 0$.
In \cite{GKT2} we proved that for any  $w \in H^s_{r,0}$, $-1/2 < s \le 0$, there exists a neighborhood $U^s\equiv U^s_w$ of $w$ in $H^s_{c,0}$
so that for any $u\in U^s$, the spectrum of $L_u$ consists of simple eigenvalues $\lambda_n(u)$, $n \ge 0$.
These eigenvalues are analytic functions on $U^s$. By shrinking $U^s$, if needed, one infers from standard perturbation theory, 
that for any given $0 \le n_0 < \infty $, the first $n_0$ eigenfunctions $f_n$, $0 \le n < n_0$, extend as $ H^{s+1}_+$-valued analytic functions to $U^s$  
(cf. Lemma \ref{estim2 kappa+mu} below). One of the main difficulties is to show that one can choose  the neighborhood $U^s$  independently of $n$
so that the components of the Birkhoff map extend to $U^s$ with the appropriated decay in $n$ as $|n| \to \infty$.
 For any $n \ge n_0$ with $n_0$ sufficiently large, the Riesz projectors 
$$
P_n : U^s \to \mathcal L(H^s_+, H^{s+1}_+), u \mapsto P_n(u) ,
$$
onto the one dimensional eigenspace, corresponding to the eigenvalue $\lambda_n$, is analytic and admits a Taylor expansion at $w_N$ in terms of $v= u - w_N$
where $w_N$ is the approximating finite gap potential described above.
See \eqref{eq:H_+} below for the definition of the Hardy spaces $H^\beta_+$, $\beta \in \R$, and
Proposition \ref{prop:L-global} and Proposition \ref{prop:L-local} in Section \ref{Lax operator} for a review of these results.
Choosing $N$ and $n_0 > N$ sufficiently large, one infers that for any $n \ge n_0$, $h_n(u) := P_n(u) f_n(w_N)$  is a nonvanishing analytic function $U^s \to H^{s+1}_+$
and hence an eigenfunction of $L_u$, corresponding to the eigenvalue $\lambda_n(u)$ of $L_u$. In general, the eigenfunctions $h_n(u)$ 
will not satisfy the normalization conditions of the eigenfunctions $f_n(u)$.
In Section \ref{sec:Psi} we introduce the pre-Birkhoff map 
$$
\Psi(u) = (\Psi_n(u))_{n \ge n_0}, \qquad 
\Psi_n(u) := \1 h_n(u) | 1 \2 , \quad \forall n \ge n_0,
$$
where $n_0 > N$ will be chosen sufficiently large.
With the help of the Taylor expansion of $P_n(u)$, $n \ge n_0$, at $w_N$, we show that for any $u \in U^s$, $\Psi(u) \in\h^{s+1}_{\ge n_0}$
and that $\Psi : U \to \h^{s+1}_{\ge n_0}$ is analytic. Here $\h^{s+1}_{ \ge n_0}$ is the sequence space defined in \eqref{def frak h_+} below.  
In order to relate the Birkhoff map $\Phi$ to the pre-Birkhoff map $\Psi$, we record in Section \ref{sec.normalized eigenfunctions}
results about the scaling factors $\kappa_n(u)$, $n \ge 0$, and $\mu_n(u)$, $n \ge 1$, introduced in \cite{GK} and further analyzed 
in \cite{GKT2}. 
Using these results together with the ones proved for the pre-Birkhoff map, we then show in Section \ref{sec:the_Birkhof_map} 
that the Birkhoff map $H^s_{r,0} \to \h^{s+1/2}_+$ is real analytic, where $\h^{s+1/2}_+$ is considered 
as a real Hilbert space.\footnote{This convention of the notion of a real analytic map is used throughout the paper.}
An important ingredient is Proposition \ref{prop:delta_n-analyticity} (Vanishing Lemma).
The delicate and quite technical proof of this proposition is given in Section \ref{sec:the_delta_map}.
Theorem  \ref{th:Phi} is then proved in Section \ref{proof main results}.
To show that $\Phi$ is a local diffeomorphism at a given potential $w \in H^s_{r,0}$ 
we use the canonical relations of the Birkhoff coordinates (cf. \cite{GK, GKT1}) to prove that $d_w\Phi$  is onto 
and we approximate $w$ by a sequence of finite gap potentials to show that $d_w\Phi$ is Fredholm.

\smallskip
\noindent
{\em Outline of the proof of Theorem \ref{th:well-posedness}.} Theorem \ref{th:well-posedness} is proved in Section \ref{proof main results}
with the help of Theorem  \ref{th:Phi}. 

\smallskip
\noindent
{\em Related work.} 
The result saying that  the Birkhoff map $\Phi : H^{s}_{r,0}\to\h^{s+1/2}_{r,0}$ of the Benjamin-Ono equation
 is a real analytic diffeomorphism for the appropriate range of $s$, shows that similarly as for the KdV equation (cf. \cite{KP-book}, \cite{KT1}) 
and the defocusing nonlinear Schr\"odinger (NLS) equation (cf. \cite{GK-book}), the Benjamin-Ono equation on $\T$ is 
integrable in the strongest possible sense. We point out that the proof of the analyticity of the Birkhoff map in the case of 
the Benjamin-Ono equation significantly differs from the one in the case of the KdV and NLS equations due to the fact 
that the Benjamin-Ono equation is not a partial differential equation.



\smallskip

\noindent{\em Notation.} In this paragraph we summarize the most frequently used notations in the paper. 
For any $\beta\in\R$, $H^\beta_c$ denotes the Sobolev space $H^\beta\big(\T,\C\big)$
of complex valued functions on the torus $\T = \R/2\pi\Z$ with regularity exponent $\beta$. 
The norm in $H^\beta_c$ is given by 
\[
\|u\|_\beta:=\Big(\sum_{n\in\Z}\1 n\2^{2\beta}|\hu(n)|^2\Big)^{1/2} , \quad \1 n \2 := \max\{1,|n|\} ,
\]
where $\hu(n)$, $n\in\Z$, are the Fourier coefficients of $u\in H^\beta_c$.
For $\beta=0$ and $u\in H^0_c\equiv L^2(\T,\C)$ we set $\|u\|\equiv \|u\|_0$.
By $H^\beta_{c,0}$ we denote the complex subspace in $H^\beta_c$ of functions with mean value zero,
\[
H^\beta_{c,0}=\big\{u\in H^\beta_c\,\big|\,\hu(0)=0 \big\}.
\]
For $f\in H^\beta_c$ and $g\in H^{-\beta}_c$, define the sesquilinear and bilinear pairings,
\[
\1 f|g\2:=\sum_{n\in\Z}\widehat{f}(n)\overline{\widehat{g}(n)} ,
\qquad \quad
\1 f , g\2:=\sum_{n\in\Z}\widehat{f}(n)\widehat{g}(-n).
\]
The positive Hardy space $H^\beta_+$ with regularity exponent $\beta\in\R$ is defined as
\begin{equation}\label{eq:H_+}
H^{\beta}_+:=\big\{f\in H^{s}_c\,\big|\,\widehat{f}(n)=0\,\,\,\forall n<0\big\} , \qquad
H_+ \equiv H^0_+ .
\end{equation}
For any $1\le p<\infty$, denote by
$
\ell^p_+\equiv\ell^p\big(\Z_{\ge 1},\C\big)
$
the Banach space of complex valued sequences $ z=(z_n)_{n\ge 1} $ with finite norm
\[
\| z \|_{\ell^p_+} :=\Big(\sum_{n\ge 1}|z_n|^p\Big)^{1/p}<\infty
\]
and by $\ell^\infty_+\equiv\ell^\infty\big(\Z_{\ge 1},\C\big)$ the Banach space of  
complex valued sequences with finite supremum  norm $ \| z\|_{\ell^\infty_+} :=\sup_{n\ge 1}|z_n|$. 
More generally, for $1\le p\le\infty$ and $m\in\Z$, we introduce the Banach space
\[
\ell^p_{\ge m}\equiv\ell^p\big(\Z_{\ge m},\C\big),\qquad \Z_{\ge m}:=\{n\in\Z\,| \, n\ge m\},
\]
as well as the space $\ell^p_c\equiv\ell^p\big(\Z,\C\big)$, defined in a similar way.
Furthermore, denote by $\h^\beta_{ \ge m}$, $\beta\in\R$, $m \in \Z$, the Hilbert space 
of complex valued sequences $ z = (z_n)_{n \ge m}$ with 
\begin{equation}\label{def frak h_+}
\|  z\|_{\h^\beta_{ \ge m}} :=\Big(\sum_{n\ge m} \1 n \2 ^{2\beta}|z_n|^2\Big)^{1/2}<\infty.
\end{equation}
For convenience of notation, we often write $\| z\|_\beta$ for $\|  z\|_{\h^\beta_{ \ge m}}$
and in the case where $m=1$,  $\h^\beta_+$ for  $\h^\beta_{\ge 1}$.  \\
Finally, for $z\in\C\setminus(-\infty,0]$, 
we denote by $\sqrt[+]{z}$ or $\sqrt{z}$ the principal branch of the  square root of $z$, defined by
$\re\big(\sqrt[+]{z}\big)>0$.

\section{Premlinaries}\label{Lax operator}
In this section we review results about the Lax operator $L_u$ of the Benjamin-Ono equation,
needed in this paper.
Most of these results were established in \cite{GKT2}.
We point out that in the context of this paper, it turned out to be advantageous to alter some of the notations
introduced in \cite{GKT2}.

\smallskip

For $u\in H^{s}_c$ with $-1/2 < s \le 0,$ consider the pseudo-differential expression
\begin{equation}\label{eq:L}
L_u:=D-T_u , \qquad D = -i\partial_x ,
\end{equation}
acting on the Hardy space $H_+^s$, where 
$T_u$ is the Toeplitz operator with symbol $u$, $T_u f:=\Pi(u f)$,
and $\Pi\equiv\Pi^+ : H^{\beta}_c \to H^{\beta}_+$, $\beta \in \R$, is the Szeg\H o projector
\[
\Pi : H^{s}_c\to H^{s}_+,\quad \sum_{n\in\Z}\widehat v(n) e^{i n x}\mapsto\sum_{n\ge 0}\widehat v(n) e^{i n x} .
\]
It follows from Lemma 1 in \cite{GKT2} that $L_u$ defines an operator in $H^{s}_+$ with domain
$H^{s+1}_+$ so that the map $L_u : H^{s+1}_+\to H^{s}_+$ is bounded.
The following result follows from \cite[Theorem 1]{GKT2} and \cite{GKT1}.

\begin{Prop}\label{prop:L-global}
For any $-1/2 < s \le 0$, there exists an open neighborhood $W\equiv W^{s}$ of $H^{s}_{r,0}$ in $H^{s}_{c,0}$ so that 
for any $u\in W$, the operator $L_u$ is a closed operator in $H^{s}_+$ with domain $H^{s+1}_+$. 
The operator has a compact resolvent and all its eigenvalues are  {\em simple}. When appropriately listed,  $\lambda_n(u)$, $n\ge 0$, 
 satisfy 
 $$
 \re(\lambda_n(u))<\re(\lambda_{n+1}(u)), \quad \forall n \ge 0, \qquad \quad  \lim_{n \to \infty} |\lambda_n(u)-n| = 0.
 $$ 
For $u\in H^{s}_{r,0}$, the eigenvalues are real valued and 
$\gamma_n(u):= \lambda_n(u)-\lambda_{n-1}(u)-1\ge 0$ for $n\ge 1$.
\end{Prop}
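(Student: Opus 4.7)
Since Proposition~\ref{prop:L-global} is attributed to Theorem~1 of \cite{GKT2} together with results from \cite{GK, GKT1}, my plan is to follow that framework, organizing the argument into three parts: the operator-theoretic assertions, the spectral statements on the real locus $H^s_{r,0}$, and the analytic extension to a complex neighborhood.

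\emph{Closedness and compact resolvent.} Starting from Lemma~1 of \cite{GKT2}, which gives boundedness of $L_u : H^{s+1}_+ \to H^s_+$, I would write $L_u - z = (D - z) - T_u$ and, since $D - z : H^{s+1}_+ \to H^s_+$ is an isomorphism for every $z \notin \Z_{\ge 0}$, factor
$$
L_u - z = (D - z)\bigl(I - (D-z)^{-1} T_u\bigr).
$$
The compact Sobolev embedding $H^{s+1}_+ \hookrightarrow H^s_+$ renders $(D - z)^{-1} T_u$ compact on $H^{s+1}_+$. The Fredholm alternative together with a choice of $z$ of large imaginary part (making $(D - z)^{-1} T_u$ of small operator norm) then yields non-emptiness of the resolvent set, closedness of $L_u$ on $H^s_+$ with domain $H^{s+1}_+$, and compactness of the resolvent, from which discreteness of the spectrum and accumulation at infinity follow.

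\emph{Spectral structure on $H^s_{r,0}$.} For real $u \in L^2_{r,0}$, $L_u$ is self-adjoint on $H_+$ with domain $H^1_+$, as in \cite{GK}, so its spectrum is real; the extension to $-1/2 < s \le 0$ is obtained by density of $L^2_{r,0}$ in $H^s_{r,0}$ combined with continuity of the eigenvalue branches in $u$. Simplicity and the unit spectral gap $\lambda_n - \lambda_{n-1} \ge 1$ are structural features of the Benjamin-Ono Lax operator proved in \cite{GK} via an explicit shift identity linking eigenfunctions at consecutive eigenvalues; in particular, eigenvalues cannot collide and adjacent pairs stay at distance at least one. The asymptotics $|\lambda_n(u) - n| \to 0$ stems from $\ell^2$-summability of $(\gamma_n(u))$, itself a consequence of the Benjamin-Ono conservation laws used in \cite{GK, GKT1}.

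\emph{Complex neighborhood.} For each $u_0 \in H^s_{r,0}$ and each finite $n$, analyticity (indeed linearity) of $u \mapsto L_u$ allows standard analytic perturbation theory to continue $\lambda_n$ as a simple eigenvalue to a neighborhood of $u_0$ in $H^s_{c,0}$, with $\re(\lambda_n)$ close to $\lambda_n(u_0)$ so that the strict ordering $\re(\lambda_n) < \re(\lambda_{n+1})$ persists. To obtain a single neighborhood $W$ of the entire real locus uniform in $n$, I would combine (a) the universal separation $\gamma_n(u_0) \ge 0$ between adjacent real eigenvalues; (b) the asymptotics $|\lambda_n(u_0) - n| \to 0$; and (c) uniform-in-$n$ Riesz projector estimates at large $n$, based on the approximate-Fourier-basis behaviour $f_n(u_0) \approx g_\infty(u_0) e^{inx}$ recalled in the introduction. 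This uniform-in-$n$ patching is the main obstacle: for individual $n$ the perturbation step is routine, but producing one complex neighborhood $W$ on which simplicity and asymptotics persist simultaneously for all $n$ requires resolvent estimates at large spectral parameter that exploit the Benjamin-Ono specific behaviour of eigenfunctions---exactly the structural input that the later sections of the paper supply.
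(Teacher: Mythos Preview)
The paper does not prove Proposition~\ref{prop:L-global}: it simply records the statement as a consequence of \cite[Theorem~1]{GKT2} and \cite{GKT1}. Your proposal is therefore not being compared against an in-paper argument but against the cited literature, and as an outline of the latter it is essentially faithful: the factorization through $D-z$ plus compactness of $(D-z)^{-1}T_u$ gives the operator-theoretic part, the real spectral structure (simplicity, unit gap) comes from the shift identity $S^*L_uS = L_u - {\rm Id} - \langle \cdot, u\rangle 1$ established in \cite{GK,GKT1}, and the complex neighborhood is obtained by perturbation with uniform-in-$n$ control at high frequency.

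Two small points to sharpen. First, the asymptotics $|\lambda_n - n|\to 0$ for real $u$ follow from $n-\lambda_n = \sum_{k>n}\gamma_k$ together with $(\gamma_k)_k\in\ell^1$ (indeed $\sum_k k\gamma_k<\infty$ by the trace formula), not merely $\ell^2$-summability. Second, in the last paragraph you invoke the approximate-Fourier-basis behaviour of eigenfunctions near a general $u_0\in H^s_{r,0}$ for the uniform-in-$n$ resolvent bounds; in \cite{GKT2} this step is carried out not via eigenfunction asymptotics but through direct resolvent estimates in shifted norms $\|\cdot\|_{s;n}$ (cf.\ Lemma~\ref{Lemma 16 GKT2} and the Counting Lemma, Proposition~\ref{prop:L-local}), which is precisely the mechanism producing a neighborhood independent of $n$.
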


In addition to the information provided by Proposition \ref{prop:L-global}, we need rather precise estimates on the spectrum of $L_u$,
established in \cite{GKT2}. 
First, we record estimates of the eigenfunctions 
of the Lax operator $L_w$ for real valued potentials  $w \in H^s_{r,0}$ with $s > -1/2$.  
Recall that in \cite{GK} ($s=0$), \cite{GKT1} ($-1/2 < s < 0$), we introduced for any $w \in H^s_{r,0}$, $-1/2 < s \le 0$, 
the $L^2$-normalized eigenfunctions $f_n \equiv f_n(w)$ , $n \ge 0$, of $L_w$, corresponding to the eigenvalues $\lambda_n(w)$, uniquely determined by
 the normalization conditions (cf. \eqref{norm'zation f_n})
\begin{equation}\label{norm'zation f_n version2}
 \1 f_0 | 1  \2 > 0, \qquad   \1 e^{ix} f_{n-1} | f_n \2 > 0, \ \  \forall n \ge 1 .
\end{equation}
The eigenfunctions $f_n$, $n \ge 0$, are in $ H^{s+1}_+ $ and hence
$$
g_n(w) := f_n(w) e^{-inx} \in H^{1+1}_c, \qquad \forall \, n \ge 0. 
$$
By Proposition 9 in \cite[Appendix A]{GKT1},  $g_\infty(w) := e^{i\partial_x^{-1} w}$ is in $H^{s+1}_c$.
Furthermore, $g_n \equiv g_n(w)$, $n \ge 0$, and $g_\infty \equiv g_\infty(w)$ satisfy the equations (cf. \cite[(80), (82)]{GKT1})
\begin{equation}\label{equ for g_n, g_infty}
Dg_n = (\lambda_n - n)g_n + \Pi_{\ge -n}(wg_n)  , \qquad
Dg_\infty = w g_\infty ,
\end{equation}
where for any $k \in \Z$, $\Pi_{\ge k}$ denotes the projection 
$$
H^\beta_c \to H^\beta_c, \, h = \sum_{n \in \Z} \widehat h(n) e^{inx} \mapsto  \Pi_{\ge k}(h) =  \sum_{n \ge k} \widehat h(n) e^{inx} .
$$
Our first result concerns the regularity of the functions $g_n(w)$ and $g_\infty(w)$
for $w \in H^s_{r,0}$ with $s > -1/2$ arbitrary. 

\begin{Lem}\label{regularity g_n}
For any $w \in H^s_{r,0}$ with $s > -1/2$, 
$$
g_n(w) \in H^{s+1}_c , \quad \forall n \ge 0,  \qquad  g_\infty(w) \in H^{s+1}_c .
$$
\end{Lem}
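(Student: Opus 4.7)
The plan is to split the lemma into two independent statements. The assertion $g_\infty(w) \in H^{s+1}_c$ is already recorded in the paragraph preceding the lemma as a direct consequence of Proposition 9 of \cite[Appendix A]{GKT1}, so no further work is needed there. For $g_n(w)$, the subrange $-1/2 < s \le 0$ is built into the construction of the eigenfunctions reviewed just above: by \cite{GK, GKT1} one has $f_n(w) \in H^{s+1}_+$, and consequently $g_n(w) = f_n(w) e^{-inx} \in H^{s+1}_c$. Thus the only substantive task is the range $s > 0$.

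For $s > 0$, I would argue by elliptic bootstrap based on the first identity in \eqref{equ for g_n, g_infty},
\[
Dg_n = (\lambda_n - n) g_n + \Pi_{\ge -n}(wg_n).
\]
Since $H^s_{r,0} \hookrightarrow L^2_{r,0}$, the already established $s=0$ case supplies the starting regularity $g_n \in H^1_c$. Suppose inductively that $g_n \in H^\sigma_c$ with $\sigma \ge 1$. I would invoke the one-dimensional Sobolev multiplication estimate
\[
\| w g_n \|_{\min(\sigma, s)} \lesssim \|w\|_s \,\|g_n\|_\sigma,
\]
which is valid because $\max(\sigma, s) \ge \sigma \ge 1 > 1/2$. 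Combined with the boundedness of $\Pi_{\ge -n}$ on every Sobolev space and the fact that $D = -i\partial_x$ inverts with a gain of one derivative on the nonzero Fourier modes of a function already in $L^2$, the equation then yields $g_n \in H^{\min(\sigma, s)+1}_c$. As long as $\sigma < s$, this strictly increases $\sigma$ by one; after at most $\lceil s \rceil$ iterations the bootstrap stabilizes and produces $g_n \in H^{s+1}_c$.

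The argument is essentially routine and I do not expect a genuine obstacle. The one point deserving care is to invoke the multiplication estimate in its asymmetric form: the $L^\infty$ bound used in the product estimate is to come from $g_n \in H^\sigma \hookrightarrow L^\infty(\T)$ (automatic since $\sigma \ge 1$ throughout the induction), rather than from $w$, which may fail to be bounded when $s < 1/2$.
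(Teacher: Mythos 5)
Your proposal is correct and is essentially the argument the paper compresses into a single sentence: the paper's proof appeals to ``the discussion above'' (which records $g_n, g_\infty \in H^{s+1}_c$ for $-1/2 < s \le 0$), then to the equations \eqref{equ for g_n, g_infty} together with the Sobolev product estimates of Lemma \ref{multi of functions} and Remark \ref{multi of functions 3}, which is precisely the bootstrap you carry out for $s>0$. The only thing I would add is that the case $g_\infty$ for $s>0$ is handled by the identical bootstrap applied to $Dg_\infty = wg_\infty$ (or, even more simply, via the Banach algebra property of $H^{s+1}_c$ for $s+1 > 1/2$ applied to the power series of $e^{i\partial_x^{-1}w}$), rather than relying on Proposition 9 of \cite{GKT1}, which is stated there in the range $-1/2 < s \le 0$; the fix is trivial and the conclusion is unaffected.
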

\begin{proof}
The claimed statements follow in a straightforward way
from the discussion above, \eqref{equ for g_n, g_infty}, and well known results on the multiplication of elements in Sobolev spaces,
recorded below in a form convenient for our purposes.
\end{proof}
The following lemma is well known  (cf. e.g. \cite[Lemma 1]{GKT2}).
\begin{Lem}\label{multi of functions}
For any $s > -1/2$, there exists a constant $C_{s, 1} \ge 1$, only depending on $s$, so that the following holds:\\
(i) For any $s >  -1/2$, 
\begin{equation}\label{multi of functions 1}
\| f g\|_s \le C_{s,1} \|f \|_s \| g \|_{1 + s - \tau} , \qquad \forall \, f \in H^s_c, \ g \in H^{s+ 1 - \tau }_c ,
\end{equation}
where $\tau:= \min \{ \frac 12 (\frac 12 + s), \frac 14 \}$. \\
(ii) For any $ s > 1/2$,
\begin{equation}\label{multi of functions 2}
\| f g\|_s \le C_{s, 1} \|f \|_s \| g \|_s , \qquad \forall \, f, g \in H^s_c .
\end{equation}
We choose $C_{s,1}$ in such a way that it is nondecreasing in $s$ on the interval $[0, \infty)$.
\end{Lem}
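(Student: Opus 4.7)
The plan is to reduce both estimates to a single uniform sum bound via a Cauchy--Schwarz argument on Fourier coefficients. With $\sigma := 1 + s - \tau$ for part (i) and $\sigma := s$ for part (ii), I would expand $\widehat{fg}(n) = \sum_m \widehat{f}(n - m) \widehat{g}(m)$, normalize $F_k := \langle k \rangle^s |\widehat{f}(k)|$ and $G_k := \langle k \rangle^\sigma |\widehat{g}(k)|$ so that $\|F\|_{\ell^2(\Z)} = \|f\|_s$ and $\|G\|_{\ell^2(\Z)} = \|g\|_\sigma$, and write
\[
\langle n \rangle^s |\widehat{fg}(n)| \le \sum_m K(n, m) F_{n - m} G_m, \qquad K(n, m) := \frac{\langle n \rangle^s}{\langle n - m \rangle^s \langle m \rangle^\sigma} .
\]
Applying Cauchy--Schwarz in $m$ by pairing $G_m$ against $K(n, m) F_{n - m}$ gives $(\langle n \rangle^s |\widehat{fg}(n)|)^2 \le \|g\|_\sigma^2 \sum_m K(n, m)^2 F_{n - m}^2$; summing in $n$ and interchanging with the change of variable $k = n - m$ would reduce the target inequality $\|fg\|_s \le C \|f\|_s \|g\|_\sigma$ to the single uniform bound
\[
\tilde K(k) := \sum_m K(k + m, m)^2 = \langle k \rangle^{-2s} \sum_m \frac{\langle k + m \rangle^{2s}}{\langle m \rangle^{2\sigma}} \le C_s, \qquad \forall\, k \in \Z .
\]

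The role of the specific $\tau = \min\{(1/2 + s)/2,\, 1/4\}$ then becomes transparent: both terms under the minimum are strictly less than $s + 1/2$ whenever $s > -1/2$, which is exactly the condition $\sigma > 1/2$ exploited below, and $\sigma$ is nondecreasing in $s$ on $[0, \infty)$, so $C_{s, 1}$ can be chosen nondecreasing there. Part (ii) is the specialization $\sigma = s > 1/2$. To verify the uniform bound on $\tilde K(k)$ I would split the $m$-sum into three regions: (a) $|m| \le |k|/2$, where $\langle k + m \rangle \asymp \langle k \rangle$ and $\sum \langle m \rangle^{-2\sigma}$ converges by $\sigma > 1/2$, producing a piece of size $\lesssim \langle k \rangle^{2s}$; (b) $|m| > |k|/2$ with $|k + m| > |k|$, where $\langle k + m \rangle^{2s}$ is dominated by $\langle m \rangle^{2s}$ (for $s \ge 0$) or by $\langle k \rangle^{2s}$ (for $s < 0$); and (c) the ``resonant'' regime $|m| > |k|/2$ with $|k + m| \le |k|$.

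Region (c) is the main obstacle, especially for $s < 0$, where the naive pointwise bound $\langle k + m \rangle^{2s} \le 1$ loses too much. The right input is the elementary counting estimate $\sum_{|j| \le |k|} \langle j \rangle^{2s} \lesssim \langle k \rangle^{2s + 1}$, valid precisely because $s > -1/2$, applied together with $\langle m \rangle \asymp \langle k \rangle$ in this region; combined with (b), this yields contributions of order $\langle k \rangle^{2s + 1 - 2\sigma}$. Dividing by $\langle k \rangle^{2s}$ assembles the three pieces into $\tilde K(k) \lesssim 1 + \langle k \rangle^{1 - 2\sigma}$, uniformly bounded since $\sigma > 1/2$, which completes both parts.
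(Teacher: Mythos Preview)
The paper does not supply a proof of this lemma; it merely records it as well known and cites \cite[Lemma~1]{GKT2}. So there is no in-paper argument to compare against, and your proposal must be judged on its own.

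Your reduction to the uniform kernel bound $\tilde K(k)\le C_s$ via Cauchy--Schwarz is correct for part~(i). The three-region split works: region~(a) uses $2\sigma>1$, region~(c) uses the counting bound $\sum_{|j|\le |k|}\langle j\rangle^{2s}\lesssim \langle k\rangle^{2s+1}$ (valid precisely for $s>-1/2$), and region~(b) in the case $s\ge 0$ uses, in addition to $\sigma>1/2$, the stronger inequality $\sigma>s+\tfrac12$ (equivalently $\tau<\tfrac12$), which holds since $\tau\le\tfrac14$.

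There is, however, a genuine gap in part~(ii). With $\sigma=s$ the quantity $\tilde K(k)$ is \emph{not} finite: already
\[
\tilde K(0)=\sum_{m\in\Z}\frac{\langle m\rangle^{2s}}{\langle m\rangle^{2s}}=\sum_{m\in\Z}1=+\infty,
\]
and more generally in your region~(b) the summand $\langle k+m\rangle^{2s}/\langle m\rangle^{2s}$ stays bounded away from zero as $|m|\to\infty$, so the series diverges for every $k$. Thus your single Cauchy--Schwarz pairing cannot yield the algebra estimate $\|fg\|_s\lesssim\|f\|_s\|g\|_s$; ``part~(ii) is the specialization $\sigma=s$'' does not go through. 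The fix is standard and short: for $s>1/2$ use the Peetre-type inequality $\langle n\rangle^s\le C_s(\langle n-m\rangle^s+\langle m\rangle^s)$ together with Young's inequality $\ell^2*\ell^1\hookrightarrow\ell^2$ (available since $\widehat f,\widehat g\in\ell^1$ for $s>1/2$). So part~(i) is fine as written, but part~(ii) requires a separate, simpler argument rather than the claimed specialization of your scheme.
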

\begin{Rem}\label{multi of functions 3}
For any $ s_1, s_2 \ge 0$ and any $s \le \min(s_1, s_2) $ with $s_1 + s_2 - s> 1/2$,
$$
H^{s_1}_c \times H^{s_2}_c \to H^s_c , \,  (f_1, f_2) \mapsto f_1 f_2
$$
is a bounded bilinear map.
\end{Rem}

In the sequel, we need bounds of $\|g_n(w)\|_{1+s}$ in terms of $\| w \|_s$, which are uniform in $n\ge 0$.
\begin{Lem}\label{estimates g_n}
For any $s > -1/2$, there exists a constant $C_{s, 2} \ge 1$, only depending on $s$, so that for any  $n \ge 0$ and any $w \in H^s_{r,0}$,
$$
\|g_n(w)\|_{1+s} \le C_{s,2} (1+ \|w\|_s)^{\eta(s)}, \qquad   \|g_\infty(w) \|_{1+s} \le C_{s,2} (1+ \|w\|_s)^{\eta(s)}\, ,
$$
where 
\begin{equation}\label{def eta(s)}
\eta(s) := \begin{cases} 
\frac{4(1+s)}{1+2s} \ \  \  \forall  -\frac 12 < s \le 0 \\
4 \qquad  \quad  \forall \, 0 < s \le 1\\
2 + 2s \quad \  \forall \, s > 1 \, .
\end{cases}
\end{equation}
We choose $C_{s,2}$ in such a way that it is nondecreasing in $s$ on the interval $[0, \infty)$.
\end{Lem}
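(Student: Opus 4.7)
Here is my plan for proving Lemma \ref{estimates g_n}.

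\smallskip
\noindent
\emph{Starting point.} The two functions $g_n$ and $g_\infty$ come equipped with cheap $L^2$-controls: $\|g_n\|_0=\|f_n\|_{L^2}=1$ by the normalization \eqref{norm'zation f_n version2}, while $|g_\infty|\equiv 1$ implies $\|g_\infty\|_0\le\sqrt{2\pi}$. Moreover, for any $h\in H^{s+1}_c$ one has the elementary identity $\|h\|_{s+1}^2=|\widehat{h}(0)|^2+\|Dh\|_s^2$ (since $\langle n\rangle=|n|$ for $n\ne 0$). Plugging in the evolution equations \eqref{equ for g_n, g_infty} one gets
\[
\|g_\infty\|_{s+1}\le 1+\|wg_\infty\|_s,\qquad
\|g_n\|_{s+1}\le 1+|\lambda_n-n|\,\|g_n\|_s+\|wg_n\|_s.
\]
The zero-Fourier-mode contribution is absorbed using $|\widehat{g_\infty}(0)|\le\|g_\infty\|_{L^\infty}=1$ and $|\widehat{g_n}(0)|=|\widehat{f_n}(n)|\le 1$. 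For the $g_n$-estimate I plan to invoke an $n$-independent spectral bound of the form $|\lambda_n(w)-n|\le C(1+\|w\|_s)^{\alpha(s)}$ obtained in \cite{GKT2} (via the trace-type identities available for $L_w$); this reduces the whole matter to estimating $\|wg\|_s$ in terms of $\|w\|_s$ and a Sobolev norm of $g$ that is \emph{strictly weaker} than $\|g\|_{s+1}$, so that Young's inequality can close the argument.

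\smallskip
\noindent
\emph{The three regimes.} For $-\tfrac12<s\le 0$, Lemma \ref{multi of functions}(i) gives $\|wg\|_s\le C_{s,1}\|w\|_s\|g\|_{s+1-\tau}$ with $\tau=\tfrac14(1+2s)$. I interpolate $\|g\|_{s+1-\tau}\le\|g\|_0^{\tau/(s+1)}\|g\|_{s+1}^{1-\tau/(s+1)}$ and apply Young with exponents $p=(s+1)/(s+1-\tau)$ and $q=(s+1)/\tau$, absorbing $\tfrac12\|g\|_{s+1}$ on the left. This yields
\[
\|g\|_{s+1}\lesssim (1+\|w\|_s)^{(s+1)/\tau}=(1+\|w\|_s)^{4(1+s)/(1+2s)},
\]
which is exactly $\eta(s)$ in this range. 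For $s>1$, I apply instead the Sobolev algebra bound (ii): $\|wg\|_s\le C_{s,1}\|w\|_s\|g\|_s$. Interpolating $\|g\|_s\le\|g\|_0^{1/(s+1)}\|g\|_{s+1}^{s/(s+1)}$ and closing by Young gives $\|g\|_{s+1}\lesssim(1+\|w\|_s)^{s+1}$. Combined with the contribution of $|\lambda_n-n|$ — which for $w\in H^s$, $s>1$, is known to grow at most like $\|w\|_s^{s+1}$ — the total exponent becomes $2(s+1)=2+2s=\eta(s)$. For the intermediate regime $0<s\le 1$, I bootstrap: the $s=0$ case already yields $\|g\|_1\lesssim(1+\|w\|_0)^4\le(1+\|w\|_s)^4$, and combining this with either Lemma \ref{multi of functions}(i) with $\tau=\tfrac14$ (if $0<s\le\tfrac12$) or the algebra estimate (if $\tfrac12<s\le 1$), then interpolating $\|g\|_{s+1-\tau}$ (resp.\ $\|g\|_s$) between $\|g\|_1$ and $\|g\|_{s+1}$, one closes by Young to land on the constant exponent $\eta(s)=4$. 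Monotonicity of the resulting constant $C_{s,2}$ on $[0,\infty)$ is arranged by taking the maximum of the constants produced in each subcase.

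\smallskip
\noindent
\emph{Main obstacle.} The delicate point is uniformity in $n\ge 0$: the $L^2$-normalization $\|g_n\|_0=1$ is free, but the error $(\lambda_n-n)g_n$ on the right-hand side of the equation for $g_n$ must be controlled by the \emph{same} power $\eta(s)$ of $\|w\|_s$, with a constant that does not deteriorate as $n\to\infty$. This hinges on having the right uniform-in-$n$ bound on $|\lambda_n-n|$ — precisely what \cite{GKT2} provides — so that this term is a lower-order perturbation compared with $\|wg_n\|_s$. A secondary technicality is that for $s\in(0,1]$ the direct Young-inequality argument from the $-\tfrac12<s\le 0$ regime would overshoot the target exponent $4$; getting a constant exponent in this range forces the bootstrap from the $s=0$ bound and a careful choice of the interpolation base ($\|g\|_1$ rather than $\|g\|_0$). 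Once these two points are settled, the rest of the proof is the routine Young/interpolation bookkeeping sketched above, and yields the claimed monotonicity of $C_{s,2}$.
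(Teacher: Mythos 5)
Your argument for $-\tfrac12 < s \le 0$ is correct and coincides with the paper's: apply Lemma~\ref{multi of functions}(i) with $\tau = \tfrac14(1+2s)$, interpolate the intermediate norm between $L^2$ and $H^{1+s}$, and close by Young with exponent $q=(1+s)/\tau=4(1+s)/(1+2s)$; the term $|\lambda_n - n|$ is dominated because $|\lambda_n - n| \le |\lambda_0| \lesssim (1+\|w\|_s)^{2/(1+2s)}$ and $2/(1+2s) < q$. (The bound on $|\lambda_0|$ is from \cite{GKT1}, \cite{GK}, not \cite{GKT2}.)

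For $s>1$ your key input is not correct. You invoke a bound $|\lambda_n - n| \lesssim \|w\|_s^{s+1}$; no such estimate exists and it is not what one needs. The correct, $n$-uniform control is the trace formula $|\lambda_n - n| \le |\lambda_0| \le \tfrac12\|w\|_0^2 \le \tfrac12\|w\|_s^2$, valid for any $s \ge 0$ — the coefficient of $\|g_n\|_s$ in the equation-derived inequality is \emph{quadratic} in $\|w\|_s$, independently of $s$. One then treats both right-hand-side contributions in a single inequality $\|g_n\|_{1+s} \le 1 + (\tfrac12\|w\|_s^2 + C_{s,1}\|w\|_s)\|g_n\|_s$, interpolates $\|g_n\|_s \lesssim \|g_n\|_{1+s}^{s/(1+s)}$ against $L^2$ and $H^{1+s}$, and applies Young with exponent $1+s$ to get $(1+\|w\|_s)^{2(1+s)}$; your phrase ``the total exponent becomes $2(s+1)$'' is not a well-defined step, and with your stated input the exponent would instead become $(s+1)^2$.

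The more serious gap is in $0 < s \le 1$. Your bootstrap from the $s=0$ bound does not close. Already the single term $|\lambda_n - n|\,\|g_n\|_s \lesssim \|w\|_s^2\|g_n\|_1 \lesssim (1+\|w\|_s)^6$ exceeds the target $(1+\|w\|_s)^4$, and interpolating $\|g_n\|_s$ between $L^2$ and $H^1$ only brings this to $(1+\|w\|_s)^{2+4s}$, still $>4$ for $s>\tfrac12$; the $\|wg_n\|_s$ contribution via Lemma~\ref{multi of functions}(i) with $\tau=\tfrac14$ has the same defect, producing $(1+\|w\|_s)^{4(1+s)}$. What the paper uses is a different product estimate, Remark~\ref{multi of functions 3}: for $0<s\le 1$, $\|wg_n\|_s \le C_5\|w\|_s\|g_n\|_{\frac{1+s}{2}}$, exploiting $\tfrac{1+s}{2}>\tfrac12$ for $s>0$. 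Since $\tfrac{1+s}{2}$ is exactly half of $1+s$, the $L^2$--$H^{1+s}$ interpolation gives $\|g_n\|_{\frac{1+s}{2}} \lesssim (\|g_n\|_{1+s})^{1/2}$, and also $\|g_n\|_s \le \|g_n\|_{\frac{1+s}{2}}$, so both terms collapse into $\|g_n\|_{1+s} \le 1 + C_6(\tfrac12\|w\|_s^2 + C_5\|w\|_s)(\|g_n\|_{1+s})^{1/2}$, a quadratic inequality that closes directly to $(1+\|w\|_s)^4$. This trick — placing only $H^{(1+s)/2}$ regularity on $g_n$ rather than something near $H^{1+s}$ — is the missing ingredient; keeping $\|g_n\|_0=1$ as the interpolation base (not replacing it by $\|g_n\|_1$) is what keeps the exponent at $4$.
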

\begin{Rem}
We have chosen the exponant $\eta(s)$ to be nondecreasing in $s$ on the interval $[0, \infty)$. This is needed for our scheme of proof of Theorem \ref{th:Phi}, 
whereas the size of $\eta(s)$ is of no importance for our purposes.  
\end{Rem}
\begin{proof} The proof of the claimed estimates of $\|g_n(w)\|_{1+s}$ and $\| g_\infty(w)\|_{1 +s}$ are similar,
hence we concentrate on the ones of $\|g_n(w)\|_{1+s}$. 
For notational convenience, we do not indicate the dependence  of quantities such as $\lambda_n$ and $g_n$ on $w$ in this proof
and we fix $n \ge 0$.

\noindent
{\em Case $-1/2 < s \le 0$.} First note that for any $w \in H^s_{r,0}$, 
\begin{equation}\label{L^2 norm}
\|g_n \|_s \le \| g_n \| = \|f_n e^{-inx} \| = 1, \qquad
| \1 g_n | 1 \2 | \le \|g_n\| = 1.
\end{equation}
By \eqref{equ for g_n, g_infty}, one has
$$
\| Dg_n \|_s \le |\lambda_n - n | \|g_n \|_s + \|wg_n \|_s \, .
$$ 
Hence using \eqref{L^2 norm} and the estimate
$n - \lambda_n  = \sum_{k \ge n+1} \gamma_k \le  |\lambda_0|$ (cf. \cite[(3.13)]{GK}, \cite[Section 3]{GKT1}),
it follows that
\begin{equation}\label{1 norm g_n}
\|g_n\|_{1+s} \le 1 + \|Dg_n\|_s \le 1 + |\lambda_0| + \| wg_n\|_s .
\end{equation}
By Lemma \ref{multi of functions} with $C_1:=  C_{s, 1}$, 
$$
\| wg_n \|_s \le C_{1} \|w\|_s \|g_n\|_{1 + s - \tau}, \qquad \tau \equiv \tau(s) = (\frac 12 + s)/2 .
$$
Interpolating between $L^2_c$ and $H^{s+1}_c$, and since $\|g_n\|_0 = 1$, one obtains
$$
\|g_n\|_{1+s-\tau} \le C_2 (\|g_n\|_{1+s})^{1 - \frac 1q} , \qquad \frac1q := \frac{\tau}{1+s} ,
$$
where $C_2 \ge 1$ is a constant, only depending on $s$.
By Young's inequality, 
$$
\| wg_n \|_s \le C_1 C_2 \|w\|_s ( \|g_n\|_{1+s})^{1 - \frac 1q} \le \frac1q (C_1C_2 \|w\|_s)^q + (1- \frac 1q) \|g_n\|_{1+s}
$$
and in turn
$$
\| g_n \|_{1+s} \le q + q |\lambda_0|  + (C_1C_2 \|w\|_s)^q .
$$
By \cite[(19), (31)-(32)]{GKT1}, there exists $C_3 \ge 1$, only depending on $s$, so that 
$$
|\lambda_0| \le 1 +C_3 (1 + \|w\|_s)^{\frac {2}{1 + 2s}} .
$$
(For $s=0$, one has by the trace formula $|\lambda_0| \le \sum_{k \ge 1} (k \gamma_k)) = \frac 12 \|w\|$ (cf. \cite[Proposition 3.1]{GK}.)
Since $\frac {2}{1 + 2s} < 4\frac{1+s}{1+2s} =q$ and $q = \eta(s)$, 
we thus conclude that there exists a constant $C_4 \ge 1$, only depending on $s$,
so that
$$
\|g_n \|_{1+s} \le C_4 (1 + \|w\|_s)^{\eta(s)} .
$$
\noindent
{\em Case $0 < s \le1/2$.} Note that by the trace formula, mentioned above,
one has $| \lambda_0 | \le \frac 12 \|w\|^2$  and $g_n \in H^1_c$. Hence by \eqref{equ for g_n, g_infty},
\begin{equation}\label{2 norm g_n}
\|g_n\|_{1+s} \le 1 + \frac 12 \|w\|^2 \|g_n\|_s + \| wg_n\|_s .
\end{equation}
By Remark \ref{multi of functions 3} (use  $\frac{1+s}{2} > \frac 12$ for $0 < s \le 1/2$), there exists $C_5 \ge 1$, only depending on $s$, so that
$$
\|wg_n \|_s
 \le C_5 \|w\|_s \|g_n\|_{\frac{1+s}{2}} .
 $$
 Interpolating between $L^2_c$ and $H^{s+1}_c$, and since $\|g_n\| = 1$, there exists $C_6 \ge 1$, only depending on $s$, so that
$$
 \|g_n\|_{\frac{1+s}{2}} \le  C_6 (\|g_n\|_{1+s})^{1/2} 
$$ 
and, since $0 < s \le 1/2$, $ \|w\|^2 \|g_n\|_s \le \|w\|_s^2 \|g_n\|_{\frac{1+s}{2}}$, one obtains
\begin{align*}
\|g_n\|_{1+s}&\le 1 + \big(\frac 12 \|w\|^2_s + C_5 \|w\|_s \big) \|g_n\|_{\frac{1+s}{2}}\\
&\le 1+ C_6\big(\frac {1}{2} \|w\|^2_s + C_5 \|w\|_s\big) \big(\|g_n\|_{1+s}\big)^{1/2}
\end{align*}
 and in turn
 $$
 \begin{aligned}
  \|g_n\|_{1+s} & \le 1 + \big(\frac 12 \|w\|^2_s + C_5 \|w\|_s \big) \|g_n\|_{\frac{1+s}{2}} \\
& \le 1 + \frac 12 C_6^2 \big( \frac {1}{2} \|w\|^2_s + C_5 \|w\|_s\big)^2
 + \frac 12 \|g_n\|_{1+s} .
 \end{aligned}
 $$
We conclude that there exists a constant $C_7 \ge 1$, only depending on $s$, so that
$$
\|g_n \|_{1+s} \le C_7(1 +  \|w\|_s)^4 .
$$ 
\noindent
{\em Case $ s >1/2$.} By \eqref{equ for g_n, g_infty} and Lemma \ref{multi of functions} (ii) with $C_1= C_{s, 1}$
$$
\| g_n\|_{1+s} \le 1 + \big( \frac12 \|w\|_s^2  +  C_1 \|w\|_s \big) \|g_n\|_s , \qquad  .
$$
Interpolating between $L^2_c$ and $H^{s+1}_c$, it follows that there exists a constant $C_8 \ge 1$, only depending on $s$, so that
$\| g_n \|_s \le C_8 \|g_n \|_{1+s}^{\frac{s}{1+s}}$. By Young's inequality one then infers that
$$
\| g_n \|_{1+s} 
 \le 1 + \frac{1}{1+s}(C_8 \frac{1}{2} \|w\|_s^2  + C_8 C_1 \|w \|_s)^{1 + s} + \frac{s}{1+s} \| g_n \|_{1+s} 
  $$
and in turn concludes  that there exists a constant $C_{9} \ge 1$, only depending on $s$, so that 
$$
\| g_n \|_{1+s} \le C_{9}(1 + \|w\|_s)^{2 + 2s} .
$$
The claimed estimates now follow by setting $C_{s,2} := \max (C_4, C_7, C_{9})$ ($\forall \, -1/2 < s \le 0$)
and $C_{s,2} := \sup\{C_4, C_7, C_9, C_{s'} : \, s' < s\}$ ($\forall \, s > 0$).
\end{proof} 

To state the refined estimates on the spectrum of $L_u$ we introduce some more notation.
For any $M > 0$ and $s > -1/2$, let
\begin{equation}\label{def C_{M,s}}
C_{M, s} := \rho \, C_{s,1}^3 C_{s,2}^2 (2 +M)^{2\eta(s)} ,  \qquad  \rho\equiv \rho_{M,s}:= 8 C_{s,2}^2(2 + M)^{2\eta(s)} \ge 8\cdot 2^8 ,
\end{equation}
where $\eta(s)$ is given by \eqref{def eta(s)}, $C_{s,1}$ by Lemma \ref{multi of functions},  
and $C_{s,2}$ by Lemma \ref{estimates g_n}. Note that $C_{M,s}$, $\rho_{M,s}$, and $8C_{M,s}/ \rho_{M,s} = C^3_{s,1}$
are nondecreasing functions in $s$ on the interval $[0, \infty)$. Furthermore,
\begin{equation}\label{estimates constants}
C_{s,1} \ge 1 \, , \qquad  C_{s,2} \ge 1 \, , \qquad  C_{M,s} \ge 1 \, .
\end{equation}
For any $w \in H^s_{r,0} \setminus \{ 0 \}$, $-1/2 < s \le 0$, and $N \ge 1$ with $\gamma_N(w) > 0$, denote by $w_N$
the element in $\mathcal U_N$, defined by 
\begin{equation}\label{def w_N}
\Phi_n(w_N) = \Phi_n(w), \quad \forall n \le N, \qquad  \Phi_n(w_N) = 0, \quad \forall n > N.
\end{equation}
For any $M > 0$ and $w \in  H^s_{r,0}$ with $0 < \|w\|_s \le M$, 
we choose $N \equiv N(w) \ge 1$ so large that
\begin{equation}\label{choice N}
\| w - w_N \|_{s} \le \frac{1}{2 \, C_{M, s}}
\end{equation}
where $C_{M, s}$ is the constant introduced in \eqref{def C_{M,s}}.

Finally, for any $v \in H^s_{c, 0}$, we denote by $B^{s}_{c,0}(v, r)$ the ball in $H^s_{c,0}$
of radius $r > 0$, centered at $v$, Furthermore, 
 for any $r > 0$ and $n \ge 0$, we introduce the following subsets of $\C$,
\begin{equation}\label{eq:Vert}
\Wert_n(r):=\big\{\lambda\in\C\,\big|\,|\lambda-n|\ge r,\,|\re(\lambda)-n|\le 1/2\big\}, \qquad \qquad
\end{equation}
\begin{equation}\label{def:D_n}
D_n(r): =\big\{\lambda\in\C\,\big|\,|\lambda-n|<r \big\} , \qquad \qquad \qquad \qquad \qquad \qquad \quad
\end{equation}
and denote by $\partial D_n(r)$ the counterclockwise oriented boundary of $D_n(r)$ in $\C$ 
and by $\Wert_n^0(r)$ the interior of $\Wert_n(r)$ in $\C$.

Arguing as in the proof of Lemma 16 in \cite{GKT2}, one gets the following
\begin{Lem}\label{Lemma 16 GKT2} 
Let $-1/2 < s \le 0$ and $M>0$. Then for any $w \in H^s_{r,0}$ 
with $0 < \|w\|_s \le M$, $w_N$ defined as in \eqref{def w_N},
and $N$ chosen so that 
$\| w - w_N\|_s < 1/ 2C_{M,s}$ (cf. \eqref{choice N}), there exists $n_0 > N$ so that for any
$v \in H^s_{c,0}$ and $n \ge n_0$,
$$
\sup_{\lambda \in \rm{Vert}_n(1/4)} \| T_v (L_{w_N} - \lambda)^{-1}\|_{H^{s;n}_+ \to H^{s;n}_+} \le \frac{8}{\rho} C_{M, s} \|v\|_s ,
$$
where $H^{s;n}_+$ is the Hilbert space $H^s_+$, endowed with the inner product, associated with
the shifted norm $\| \cdot \|_{s; n}$.
\end{Lem}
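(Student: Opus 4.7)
The plan is to mimic the proof of Lemma 16 in \cite{GKT2}, carefully tracking the constants $C_{M,s}$ and $\rho$ introduced in \eqref{def C_{M,s}} and the shifted norm $\|\cdot\|_{s;n}$. The key structural input is the explicit description of the eigendata of $L_{w_N}$ above level $N$: for every $k\ge N$ one has $\lambda_k(w_N)=k$ and $f_k(w_N)=g_\infty(w_N)e^{ikx}$. Consequently, the high-frequency part of the resolvent $(L_{w_N}-\lambda)^{-1}$ is a conjugation of a Fourier multiplier by multiplication with $g_\infty(w_N)$, on which precise estimates are available through Lemma \ref{estimates g_n}.

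First I would split $H^s_+ = E^{<N}\oplus E^{\ge N}$ into $L_{w_N}$-invariant subspaces, with $E^{<N}$ the span of the low eigenfunctions $f_0(w_N),\ldots,f_{N-1}(w_N)$ and $E^{\ge N}$ the closure of the span of $\{g_\infty(w_N)e^{ikx}: k\ge N\}$. I would then pick $n_0>N$ large enough so that for every $\lambda\in\Wert_n(1/4)$ with $n\ge n_0$ the distance from $\lambda$ to each of the finitely many eigenvalues $\lambda_0(w_N),\ldots,\lambda_{N-1}(w_N)$ is at least $1/2$; the resolvent on $E^{<N}$ is then a finite rank operator whose norm decays in $n$ and can be absorbed into $n_0$. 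On $E^{\ge N}$, the resolvent equals
$$
M_{g_\infty(w_N)}\circ \Pi_{\ge N}(D-\lambda)^{-1}\Pi_{\ge N}\circ M_{g_\infty(w_N)^{-1}},
$$
where $M_f$ denotes multiplication by $f$. By construction of $\|\cdot\|_{s;n}$, the Fourier multiplier $(D-\lambda)^{-1}\Pi_{\ge N}$ is diagonal in the basis $\{e^{ikx}\}_{k\ge N}$ with norm $\le 4$ on $H^{s;n}_+$ for every $\lambda\in\Wert_n(1/4)$.

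The second step is to compose with $T_v=\Pi\circ M_v$ and track the constants. Applying the multiplication inequality of Lemma \ref{multi of functions} three times (once for $M_v$ and once for each of $M_{g_\infty(w_N)^{\pm 1}}$) and bounding $\|g_\infty(w_N)^{\pm 1}\|_{s+1}\le C_{s,2}(1+M)^{\eta(s)}$ via Lemma \ref{estimates g_n}, the cumulative constant is
$$
C_{s,1}^{3}\,C_{s,2}^{2}\,(2+M)^{2\eta(s)}\cdot 4\cdot 2 \;=\; 8\,C_{s,1}^{3}\,C_{s,2}^{2}\,(2+M)^{2\eta(s)} \;=\; \tfrac{8}{\rho}\,C_{M,s},
$$
exactly matching the definitions in \eqref{def C_{M,s}}. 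The factor $2$ accounts for the combined contributions of the two invariant summands after choosing $n_0$ large.

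The main obstacle will be verifying the $n$-uniformity of these bounds in the shifted norm $\|\cdot\|_{s;n}$. Concretely, I need to check that $M_{g_\infty(w_N)^{\pm 1}}$ and the Fourier multiplier $(D-\lambda)^{-1}\Pi_{\ge N}$ remain bounded on $H^{s;n}_+$ with constants independent of $n$; this is the shift-equivariance of the underlying Sobolev multiplication estimates, together with the diagonal structure of $(D-\lambda)^{-1}$ on the high-frequency piece. Once this uniformity is secured, the arithmetic of constants exactly reproduces the bound of Lemma 16 in \cite{GKT2}, rewritten in the present notation.
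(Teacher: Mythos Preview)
Your proposal takes the same route as the paper, which simply refers to the proof of Lemma~16 in \cite{GKT2}; you correctly identify the key structural ingredients (explicit high-mode eigendata $\lambda_k(w_N)=k$, $f_k(w_N)=g_\infty(w_N)e^{ikx}$ for $k\ge N$, the shifted norm, and Lemmas~\ref{multi of functions} and~\ref{estimates g_n}) and the resulting arithmetic of constants indeed reproduces $\tfrac{8}{\rho}C_{M,s}=8\,C_{s,1}^3 C_{s,2}^2(2+M)^{2\eta(s)}$.

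One point deserves more care: the splitting $H^s_+=E^{<N}\oplus E^{\ge N}$ is $L^2$-orthogonal but not orthogonal in $\|\cdot\|_{s;n}$, so the associated spectral projections need not have norm $1$ in $H^{s;n}_+$, and your ``factor $2$'' for combining the two summands is not automatic. A cleaner bookkeeping is to write the full resolvent via the eigenfunction expansion, keep the finitely many low modes as an explicit finite sum whose $H^{s;n}_+$-contribution is $O(n^{-(1+s)})$ (use $|\langle f|f_k\rangle|=|\widehat{\overline{g_k}f}(k)|\le \langle n-k\rangle^{-s}\|\overline{g_k}f\|_{s;n}$ together with $|\lambda_k-\lambda|\gtrsim n$), and treat the high-mode sum directly as the conjugated Fourier multiplier you describe. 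This avoids invoking $P_{\ge N}$ as a bounded operator on $H^{s;n}_+$ and lets you absorb the low-mode remainder into the choice of $n_0$, after which the constant is exactly $4\,C_{s,1}^3 C_{s,2}^2(2+M)^{2\eta(s)}$ on the high part, with the extra factor $2$ legitimately coming from the combination of the two contributions once $n_0$ is fixed.
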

\begin{Rem}
We recall that the 
shifted norm is defined as
$$
\|f\|_{s;n}:=\big(\sum_{k\ge 0}\1n-k\2^{2s}|\widehat{f}(k)|^2\big)^{1/2} .
$$
It is equivalent to the standard norm in $H^s_+$ (cf. \cite[Lemma 3]{GKT2}). 
\end{Rem}


The Counting Lemma  \cite[Theorem 4 with $\rho = 1/4$]{GKT2} then implies the following

 \begin{Prop}\label{prop:L-local}
 Let $- 1/2 < s \le 0$ and $M > 0$. For any $w \in H^{s}_{r,0}$ with $0 < \|w\|_{s} \le M$, 
 choose $w_N \in \mathcal U_N$ as in \eqref{def w_N} with $N\ge 1$ satisfying \eqref{choice N}, 
 and $n_0 > N$ as in Lemma \ref{Lemma 16 GKT2}. Then there exists a neighborhood
 $U^s \equiv U^{s}_w$ of $w$ in $H^{s}_{c,0}$ with $U^s \subset B^{s}_{c,0}(w, 1/2C_{M, s})$ $( \subset B^{s}_{c,0}(w_N, 1/C_{M, s}))$ and the following properties:
 
 \noindent 
(i) For any $u\in U^s$, the operator $L_u$ is a closed operator in $H^{s}_+$ with domain $H^{s+1}_+$
and has a compact resolvent. The spectrum ${\rm spec}(L_{u})$ of $L_{u}$ is discrete and consists of simple eigenvalues only.

\noindent
(ii) For any $n \ge n_0$ and any $u \in U^s$,
$$
  \#\big({\rm spec}(L_{u}) \cap D_{n}(1/4) \big) = 1 , \qquad 
   {\rm spec}(L_{u}) \cap {\rm Vert}_{n}(1/4) = \emptyset . 
$$
Furtherover, for any $n\ge n_0$, the resolvent map 
\[
U^s \times\Wert_n^0(1/4)\to\LL\big(H^{s}_+,H^{s+1}_+\big),\quad(u,\lambda)\mapsto(L_u-\lambda)^{-1},
\]
is analytic and
\begin{equation}\label{L_u as a perturbation}
(L_u-\lambda)^{-1}=(L_{w_N} - \lambda)^{-1} \big({\rm{Id}} -T_v(L_{w_N} -\lambda)^{-1}\big)^{-1} , \quad v: = u - w_N .
\end{equation}
The Neumann series 
\begin{equation}\label{eq:neumann_series}
\big({\rm Id} -T_v(L_{w_N}-\lambda)^{-1}\big)^{-1}=\sum_{m\ge 0} \big(T_v( L_{w_N}-\lambda)^{-1}\big)^m
\end{equation}
converges 
uniformly on $U^s\times\Wert_n(1/4)$ with respect to the operator norm, induced by the shifted norm $\|\cdot\|_{s;n}$ in $H^s_+$
as well as with respect to the standard operator norm.
\end{Prop}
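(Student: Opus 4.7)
The plan is to deduce the statement from Lemma \ref{Lemma 16 GKT2} by a standard Neumann series expansion of the resolvent, followed by a Riesz projector argument to count eigenvalues, in the spirit of the Counting Lemma \cite[Theorem 4]{GKT2}.

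First, I would take $U^s$ to be the ball $B^{s}_{c,0}(w, 1/(2C_{M,s}))$, possibly further intersected with the global neighborhood $W^s$ of Proposition \ref{prop:L-global}; item (i) then follows directly from that proposition. For any $u \in U^s$, set $v := u - w_N$. The triangle inequality together with \eqref{choice N} yields $\|v\|_s < 1/C_{M,s}$, and Lemma \ref{Lemma 16 GKT2} gives, for every $n \ge n_0$ and every $\lambda \in \Wert_n(1/4)$,
$$
\| T_v (L_{w_N} - \lambda)^{-1}\|_{H^{s;n}_+ \to H^{s;n}_+} \;\le\; \frac{8}{\rho}\, C_{M,s}\, \|v\|_s \;<\; \frac{8}{\rho} \;\le\; \frac{1}{2^{8}},
$$
where the final bound uses $\rho \ge 8 \cdot 2^{8}$. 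Hence ${\rm Id} - T_v(L_{w_N}-\lambda)^{-1}$ is invertible on $H^{s;n}_+$ with inverse given by the Neumann series \eqref{eq:neumann_series}, converging geometrically with ratio at most $2^{-8}$ uniformly in $(u,\lambda) \in U^s \times \Wert_n(1/4)$. By the equivalence of $\|\cdot\|_{s;n}$ with the standard norm on $H^s_+$ (\cite[Lemma 3]{GKT2}), the same convergence persists in the standard operator norm.

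Next, I would derive the resolvent identity. The algebraic factorization $L_u - \lambda = \bigl({\rm Id} - T_v(L_{w_N}-\lambda)^{-1}\bigr)(L_{w_N}-\lambda)$, valid as bounded operators $H^{s+1}_+ \to H^s_+$, exhibits the left-hand side for $\lambda \in \Wert_n(1/4)$, $n \ge n_0$, as a product of two invertible maps: the second factor because $w_N \in H^s_{r,0}$ has ${\rm spec}(L_{w_N}) \subset \{0,1,2,\dots\}$ disjoint from $\Wert_n(1/4)$ by Proposition \ref{prop:L-global}, the first by the preceding step. Inverting yields \eqref{L_u as a perturbation}. Since each summand $(T_v(L_{w_N}-\lambda)^{-1})^m$ depends polynomially of degree $m$ on $v = u - w_N$ and holomorphically on $\lambda$ on $\Wert_n^0(1/4)$, the uniformly convergent Neumann series defines an analytic $\LL(H^s_+, H^{s+1}_+)$-valued map on $U^s \times \Wert_n^0(1/4)$.

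Finally, for the eigenvalue count I would introduce the Riesz projector
$$
P_n(u) \;:=\; -\frac{1}{2\pi i}\oint_{\partial D_n(1/4)} (L_u - \lambda)^{-1}\, d\lambda, \qquad n \ge n_0,
$$
well defined and continuous (in fact analytic) in $u \in U^s$ because $\partial D_n(1/4) \subset \Wert_n(1/4)$. Its rank equals $\#({\rm spec}(L_u) \cap D_n(1/4))$, is integer-valued, and depends continuously on $u$, hence is locally constant on the connected neighborhood $U^s$. At the base point $w_N \in \mathcal U_N$ one has $\lambda_k(w_N) = k$ for every $k \ge N$ (as recalled in the Introduction), so for $n \ge n_0 > N$ exactly the simple eigenvalue $n$ lies in $D_n(1/4)$; thus $\dim {\rm Ran}\, P_n(w_N) = 1$, and by continuity $\dim {\rm Ran}\, P_n(u) = 1$ throughout $U^s$. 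The disjointness ${\rm spec}(L_u) \cap \Wert_n(1/4) = \emptyset$ then follows from the analyticity of $(L_u-\lambda)^{-1}$ there. The main technical subtlety is transporting the shifted-norm estimate of Lemma \ref{Lemma 16 GKT2} into analyticity in the standard operator norm uniformly in $n$: the norm-equivalence constants from \cite[Lemma 3]{GKT2} may depend on $n$, but this is absorbed by the $n$-independent geometric convergence rate $\le 2^{-8}$ of the Neumann series. Everything else is routine perturbation theory.
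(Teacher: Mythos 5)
Your argument is correct and follows the expected route: the paper does not give an explicit proof but rather invokes the Counting Lemma from \cite{GKT2}, and the mechanism behind that lemma is precisely what you spell out — the factorization $L_u-\lambda = \bigl(\mathrm{Id}-T_v(L_{w_N}-\lambda)^{-1}\bigr)(L_{w_N}-\lambda)$, the geometric bound $\frac{8}{\rho}C_{M,s}\|v\|_s<2^{-8}$ from Lemma~\ref{Lemma 16 GKT2}, the resulting Neumann series, and the Riesz-projector rank-continuity count anchored at $w_N$, where $\lambda_n(w_N)=n$ for $n>N$. One small remark: the worry you raise about $n$-dependent norm-equivalence constants is immaterial for \emph{this} proposition, since the analyticity and uniform convergence claims are stated for each fixed $n\ge n_0$ separately; the $n$-uniform geometric ratio only becomes essential later, when these resolvent expansions are summed in $n$ to build the pre-Birkhoff map in Section~\ref{sec:Psi}.
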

Proposition \ref{prop:L-local} yields the following 
\begin{Coro}\label{def P_n} 
 Let $- 1/2 < s \le 0$,  $M > 0$, and $w \in H^s_{r,0}$ with $0 < \|w\|_s \le M$.
Assume that  $U^s \equiv U^s_w$ and $n_0 > N$ are given as in Proposition  \ref{prop:L-local}.
Then  for any $u \in U^s$ and $ n\ge n_0$, the Riesz projector $P_n(u)$ is well-defined,
\begin{equation}\label{eq:P_n-local}
P_n(u):=
-\frac{1}{2\pi i}\oint_{\partial D_n}(L_u-\lambda)^{-1}\,d\lambda\in\LL\big(H^{s}_+,H^{s+1}_+\big),
\quad D_n := D_n(1/3), 
\end{equation}
and the map
\begin{equation}\label{P_n anal near 0}
P_n : U^s \to \LL\big(H^{s}_+,H^{s+1}_+\big),\quad u\mapsto P_n(u).
\end{equation}
is analytic.
\end{Coro}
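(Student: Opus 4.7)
The strategy is to read the corollary directly off Proposition \ref{prop:L-local}: the Riesz projector is a contour integral of the resolvent, and the resolvent is already known to be analytic in $u$ and continuous in $\lambda$ on the relevant region. Three points need to be checked, and none of them is substantially new.

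First, I would verify that the contour $\partial D_n = \partial D_n(1/3)$ lies in the region where the resolvent is good. Any $\lambda$ with $|\lambda - n| = 1/3$ satisfies $|\lambda - n| > 1/4$ and $|\re(\lambda) - n| \le 1/3 < 1/2$, so $\partial D_n(1/3) \subset \Wert_n^0(1/4)$. By Proposition \ref{prop:L-local}(ii), the map $(u,\lambda)\mapsto (L_u-\lambda)^{-1}$ is therefore analytic on $U^s\times\partial D_n(1/3)$ with values in $\LL(H^s_+,H^{s+1}_+)$.

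Second, I would check that exactly one eigenvalue of $L_u$ sits inside $D_n(1/3)$, so that the formula \eqref{eq:P_n-local} really does define a rank-one spectral projector. By Proposition \ref{prop:L-local}(ii) there is a unique eigenvalue in $D_n(1/4)\subset D_n(1/3)$, and any further eigenvalue in the annulus $D_n(1/3)\setminus D_n(1/4)$ would satisfy $|\lambda-n|\ge 1/4$ and $|\re(\lambda)-n|<1/3<1/2$, hence would lie in $\Wert_n(1/4)$, contradicting the same proposition. This also shows that $P_n(u)$ is, as an operator on $H^s_+$, independent of replacing the radius $1/3$ by any radius in $(1/4,1/2)$.

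Third, to obtain analyticity of $u\mapsto P_n(u)$, I would substitute the Neumann expansion \eqref{L_u as a perturbation}--\eqref{eq:neumann_series} for $(L_u-\lambda)^{-1}$ and integrate term by term. Setting $v=u-w_N$, this gives
\[
P_n(u) = -\frac{1}{2\pi i} \sum_{m\ge 0} \oint_{\partial D_n}(L_{w_N}-\lambda)^{-1}\bigl(T_v(L_{w_N}-\lambda)^{-1}\bigr)^m\,d\lambda ,
\]
where each term is an $m$-homogeneous polynomial in $v\in H^s_{c,0}$ with values in $\LL(H^s_+,H^{s+1}_+)$. Since by Proposition \ref{prop:L-local}(ii) the Neumann series converges uniformly on $U^s\times\partial D_n(1/3)$ in the operator norm on $H^{s+1}_+$ (hence also in the weaker norm on $\LL(H^s_+,H^{s+1}_+)$) and the contour is compact, the exchange of sum and integral is legitimate and the resulting series converges in $\LL(H^s_+,H^{s+1}_+)$, uniformly on $U^s$. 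This exhibits $P_n$ as a convergent power series in $v$, i.e. as an analytic map $U^s\to\LL(H^s_+,H^{s+1}_+)$.

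The only mildly delicate step is the third one, and its difficulty is limited to confirming that analyticity in the sense of Banach-space-valued holomorphy passes through the uniformly convergent contour integral. Since Proposition \ref{prop:L-local} already supplies uniform convergence of the Neumann series on a compact contour, this is routine; no new spectral or quantitative estimate is required.
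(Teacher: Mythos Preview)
Your argument is correct and matches the paper's approach: the paper states the corollary as an immediate consequence of Proposition \ref{prop:L-local} without giving a separate proof, and you have simply written out the routine verification. One small remark: in your third step the Neumann series \eqref{eq:neumann_series} is a series of operators $H^s_+\to H^s_+$ (not $H^{s+1}_+\to H^{s+1}_+$), and it is the subsequent composition with $(L_{w_N}-\lambda)^{-1}:H^s_+\to H^{s+1}_+$ that lands in $\LL(H^s_+,H^{s+1}_+)$; alternatively, since Proposition \ref{prop:L-local}(ii) already asserts that $(u,\lambda)\mapsto(L_u-\lambda)^{-1}$ is analytic with values in $\LL(H^s_+,H^{s+1}_+)$ on $U^s\times\Wert_n^0(1/4)$, analyticity of the contour integral over the compact curve $\partial D_n(1/3)\subset\Wert_n^0(1/4)$ follows directly without reinvoking the Neumann expansion.
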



\section{Analytic extension of the pre-Birkhoff map}\label{sec:Psi}
In this section we introduce the pre-Birkhoff map and study its properties. 
Throughout the section, we assume that $M >0$, $s >-1/2$, and 
$w$ is a potential in $H^s_{r,0}$, satisfying $0 < \|w\|_s \le M$.
The finite gap potential $w_N$ is defined as in \eqref{def w_N} 
and $N$ chosen so that (cf. \eqref{choice N})
\begin{equation}\label{def w_N for s}
\| w - w_N\|_s < \frac{1}{2C_{M,s} }
\end{equation}
where $C_{M,s}$ is given by \eqref{def C_{M,s}}.
Furthermore,  define
\begin{equation}\label{def sigma}
\sigma\equiv \sigma(s) := \min (s, 0)
\end{equation}
and let $n_0 > N$ be given as in Lemma \ref{Lemma 16 GKT2} so that for any $v \in H^\sigma_{c, 0}$,
$$
\sup_{\lambda \in {\rm Vert}_n(1/4)} \| T_v (L_{w_N} - \lambda)^{-1}\|_{H^{\sigma; n}_+ \to H^{\sigma; n}_+} \le \frac{8}{\rho_{M, \sigma}} C_{M, \sigma} \|v\|_\sigma .
$$
By Corollary \ref{def P_n}, for any $s>-1/2$, $u\in U^\sigma$, and $n\ge n_0$,
the Riesz projector
\begin{equation*}
P_n(u)=
-\frac{1}{2\pi i}\oint\limits_{\partial D_n}(L_u-\lambda)^{-1}\,d\lambda\in\LL\big(H^{\sigma}_+,H^{\sigma+1}_+\big) ,
\end{equation*}
is well-defined and the map $U^\sigma \to \LL\big(H^{\sigma}_+,H^{\sigma+1}_+\big)$, $u\mapsto P_n(u)$, is analytic. 
Here $U^\sigma \equiv U^\sigma_w$ is the open neighborhood of $w$ in $H^{\sigma}_{c,0}$, given by Corollary \ref{def P_n}
and $\partial D_n$  is the counterclockwise oriented boundary of $D_n$, defined as (cf. \eqref{def:D_n})
$$
D_n\equiv D_n(1/3) \, .
$$
Hence, for any $n\ge n_0$, the map
\begin{equation}\label{eq:Psi_n-map}
U^\sigma \to H^{\sigma+1}_+,\quad u\mapsto h_n( u),
\end{equation}
is analytic where
\begin{equation}\label{eq:h_n}
h_n( u):=P_n(u) f_n\in H^{\sigma+1}_+ ,\qquad f_n := f_n(w_N) .
\end{equation}
where $w_N$ is the finite gap potential, defined as in \eqref{def w_N for s}.

We point out that in the course of our arguments, the neighborhood might be shrunk.
The main objective in this section is to prove the following 
\begin{Prop}\label{prop:Psi}
For any $w \in H^s_{r,0}$ with $s>-1/2$ and $0 < \|w\|_s \le M$, there exists an open neighborhood 
$ U^{s} \equiv U^s_w$ of $w$ in $H^{s}_{c,0}$ so that the following holds:
\begin{itemize}
\item[(i)] The map
\begin{equation}\label{eq:Psi}
\Psi : U^s \to  \h^{s+1}_{\ge n_0}, \quad u \mapsto \big(\1 h_n(u) | 1\2\big)_{n\ge n_0},
\end{equation}
is analytic where $h_n(u)$, $n \ge n_0$ are given by \eqref{eq:h_n} and $n_0$ by Corollary \ref{def P_n}. 
We refer to $\Psi$ as the (local) pre-Birkhoff map near $w$.
\item[(ii)] For any $n\ge n_0$, the map $U^\sigma \to\C$, $u\mapsto  \1 h_n( u) | f_n(w_N) \2$,
is analytic and 
\[
\big|\1 h_n( u) | f_n(w_N)  \2  - 1 \big| \le 1/2, \qquad \forall \, n \ge n_0,  \ u \in U^\sigma .
\] 
\end{itemize}
The neighborhood $U^s$ can be chosen as $ U^\sigma_w \cap B^s_{c,0}(w_N, 1/C_{M,s})$
where $U^\sigma_w$ is the open neighborhood of $w$ in $H^{\sigma}_{c,0}$, given by Corollary \ref{def P_n}.
\end{Prop}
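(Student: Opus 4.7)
The plan is to Taylor-expand $h_n(u) = P_n(u) f_n(w_N)$ around the finite gap potential $w_N$ using the Neumann series supplied by Proposition \ref{prop:L-local}. Setting $R(\lambda) := (L_{w_N} - \lambda)^{-1}$, $v := u - w_N$, and $U^s := U^\sigma_w \cap B^s_{c,0}(w_N, 1/C_{M,s})$, this yields for every $u \in U^s$ and every $n \ge n_0$
\[
h_n(u) = f_n(w_N) + \sum_{m \ge 1} h_n^{(m)}(v), \qquad h_n^{(m)}(v) := -\frac{1}{2\pi i}\oint_{\partial D_n} R(\lambda)\bigl(T_v R(\lambda)\bigr)^m f_n(w_N)\, d\lambda ,
\]
with $h_n^{(m)}$ being $m$-multilinear in $v$. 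Analyticity of each scalar pairing in $u$ on $U^\sigma$ is immediate from Corollary \ref{def P_n}; the whole task is to obtain norm bounds that are uniform in $n \ge n_0$ and summable in $m$.

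For part (ii), I would estimate $|\1 h_n^{(m)}(v)\,|\, f_n(w_N)\2| \le \|h_n^{(m)}(v)\|_0 \le \|h_n^{(m)}(v)\|_{\sigma+1;n}$, and then combine Lemma \ref{Lemma 16 GKT2} with standard resolvent bounds on $\partial D_n$ and the identity $\|f_n(w_N)\|_{\sigma;n} = \|g_\infty(w_N)\|_\sigma$ (independent of $n$, since $f_n(w_N) = g_\infty(w_N) e^{inx}$ for $n \ge N$) to obtain $\|h_n^{(m)}(v)\|_{\sigma+1;n} \le C (C_{\sigma,1}^3 \|v\|_\sigma)^m$ with $C$ uniform in $n \ge n_0$. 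Since $\|v\|_\sigma \le \|v\|_s \le 1/C_{M,s}$ gives $C_{\sigma,1}^3 \|v\|_\sigma \le 1/8$ automatically, shrinking $U^s$ if necessary forces $\sum_{m \ge 1}|\1 h_n^{(m)}(v)|f_n(w_N)\2| \le 1/2$; combined with the $m = 0$ contribution $\1 f_n(w_N)|f_n(w_N)\2 = 1$, this establishes (ii).

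For part (i), the crucial observation is that $\1 f_n(w_N)|1\2 = \widehat{g_\infty(w_N)}(-n) = 0$ for every $n > N$: indeed $f_N(w_N) = g_\infty(w_N) e^{iNx} \in H_+$ forces the Fourier support of $g_\infty(w_N)$ to lie in $\{j \ge -N\}$. Hence the $m = 0$ term vanishes on $n \ge n_0$ and $\Psi_n(u) = \sum_{m \ge 1} \Psi_n^{(m)}(v)$ with $\Psi_n^{(m)}(v) := \1 h_n^{(m)}(v)|1\2$. For $m = 1$, a residue computation based on the spectral decomposition of $L_{w_N}$ yields
\[
\Psi_n^{(1)}(v) = \sum_{k \ne n}\frac{\1 v f_n(w_N)|f_k(w_N)\2 \,\1 f_k(w_N)|1\2}{\lambda_k(w_N) - n},
\]
and the same Fourier-support vanishing collapses the sum to the finitely many indices $0 \le k \le N$. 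Setting $\phi_k := g_\infty(w_N)\,\overline{f_k(w_N)} \in C^\infty$ (as $w_N$ is finite gap), $\1 v f_n(w_N)|f_k(w_N)\2 = \widehat{v\phi_k}(-n)$ with $\|v\phi_k\|_s \lesssim \|v\|_s$ by Lemma \ref{multi of functions}; the factor $1/(\lambda_k(w_N) - n) = O(1/n)$ supplies one extra power of decay, placing $(\Psi_n^{(1)}(v))_{n \ge n_0}$ in $\h^{s+1}_{\ge n_0}$ with norm $\lesssim \|v\|_s$. Summing over $m$ then gives analyticity of $\Psi : U^s \to \h^{s+1}_{\ge n_0}$.

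The main obstacle is the combinatorial bookkeeping for $\Psi^{(m)}$ with $m \ge 2$: iterated spectral decomposition of $R(\lambda)^m$ produces sums over tuples $(k_1,\dots,k_m)$ with cumulative factors $\prod_j 1/(\lambda_{k_j}(w_N) - n)$, and also higher-order residues whenever several $k_j$ equal $n$. The vanishing of $\1 f_{k_m}(w_N)|1\2$ for $k_m > N$ restricts the outermost index to finitely many values, while the inner infinite sums over $k_1,\dots,k_{m-1} > N$ reassemble---via the identity $f_k(w_N) = g_\infty(w_N) e^{ikx}$---as Fourier coefficients of iterated products of $v$, $|g_\infty(w_N)|^2$, and $g_\infty(w_N)$. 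By the Sobolev multiplication estimates of Lemma \ref{multi of functions} and Remark \ref{multi of functions 3} (applicable because $s + 1 > 1/2$) these products lie in $H^{s+1}_c$, and together with the $1/(\lambda_{k_j} - n)$ factors this keeps the multilinear norms summable in $m$ on the chosen neighborhood.
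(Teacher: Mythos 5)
Your argument for part (ii) is essentially the paper's: expand $h_n(u)$ via the Neumann series, pass to shifted norms, note $\|f_n(w_N)\|_{\sigma;n} = \|g_\infty(w_N)\|_\sigma \le 1$, and use Lemma \ref{Lemma 16 GKT2} to sum the geometric series. That part is sound.

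For part (i) the setup is right --- the $m=0$ term vanishes on $n>N$ since $\1 f_n(w_N)|1\2 = 0$, only the finitely many $\alpha \le N$ contribute, and the $m=1$ residue computation and resulting $\h^{s+1}$-bound are correct --- but the treatment of $m\ge 2$, which is where the real difficulty lies, contains a genuine gap. You assert that the inner sums over $k_1,\dots,k_{m-1}>N$ ``reassemble as Fourier coefficients of iterated products of $v$, $|g_\infty|^2$, and $g_\infty$.'' They do not: after the contour (or residue) step each inner index $k_j$ carries a weight $\sim 1/(|k_j-n|+1)$, and weighted sums of the form
\[
\sum_{k_1,\dots,k_{m-1}}\frac{|\widehat v(k_1-n)|\,|\widehat v(k_2-k_1)|\cdots}{\prod_j(|k_j-n|+1)}
\]
are \emph{not} Fourier coefficients of any product function; the weights destroy the convolution structure. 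Controlling them requires a Schur/Young-type estimate iterated $m$ times --- precisely what the operator $Q$ in the paper's proof of Lemma \ref{key lemma} encodes --- together with a split of each inner index into $k_j\ge N$ (where $g_{k_j}=g_\infty$ and $\lambda_{k_j}=k_j$) and the finitely many $k_j<N$ (where eigenfunctions and eigenvalues are not explicit, and a factor $(n-N)^{-\tau}$ has to be peeled off so the $N$-fold sum is harmless). Without those two devices you obtain neither the $\h^{s+1}_{\ge n_0}$ bound uniform in $n$ nor the geometric decay in $m$ needed to sum the series on the fixed ball $B^s_{c,0}(w_N,1/C_{M,s})$. Finally, your residue expansion encounters higher-order poles at $\lambda=n$ whenever several $k_j$ equal $n$; the paper sidesteps this entirely by bounding the contour integral directly on $\partial D_n$ via the ML estimate, using $\inf_{\lambda\in\partial D_n}|\lambda_k-\lambda|\ge(|k-n|+1)/5$, which holds uniformly including when $k=n$.
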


To prove Proposition \ref{prop:Psi}, we first need to make some preliminary considerations.
By the definition of the set $\mathcal U_N$ of finite gap potentials in \cite{GK}, the eigenvalues $\lambda_n \equiv \lambda_n(w_N)$
and eigenfunctions $f_n \equiv f_n(w_N)$ of $L_{w_N}$ satisfy
\begin{equation}\label{spec theory 1}
\lambda_n = n - \sum_{k > n} \gamma_k,  \ \ \forall n \ge 0, \qquad \gamma_n \equiv \gamma_n(w_N) = \lambda_n - \lambda_{n-1} - 1, \ \ \forall n \ge 1 ,
\end{equation}
\begin{equation}\label{spec theory 2}
\quad \lambda_n  = n, \ \   \forall \, n \ge N, \qquad \qquad  \quad      \gamma_N  > 0 , \quad  \gamma_n = 0, \ \   \forall \, n > N ,  \quad \qquad  \qquad
\end{equation}
\begin{equation}\label{spec theory 3}
f_n = g_n e^{inx}, \ \  \forall n \ge 0, \qquad \qquad     \1 f_n | 1 \2 = 0, \ \  \forall n > N , \qquad \qquad \qquad \ \
\end{equation}
\begin{equation}\label{spec theory 4}
g_n \equiv g_n(w_N) = g_\infty , \ \  \forall n \ge N, \quad 
g_\infty \equiv g_\infty(w_N) = e^{i\partial_x^{-1} w_N} . \qquad \qquad
\end{equation}
Since $\1 f_n | 1 \2 = 0$ for any $n > N$ and $(f_n)_{n \ge 0}$ is an orthonormal basis of $H_+$,
\begin{equation}\label{expansion of one}
1 = \sum_{0 \le \alpha \le N} \1 1 | f_\alpha \2 f_\alpha
\end{equation}
and since by \eqref{eq:Psi_n-map}, $U^\sigma \to \C, u \mapsto \1 h_n(u) | 1 \2$ is analytic,
item (i) of Proposition \ref{prop:Psi} follows once we show that there exist $n_1 \ge n_0 \, (> N)$
and a neighborhood $U^s \equiv U^s_w$ of $w$ in $H^s_{c,0}$ so that for any $0 \le \alpha \le N$,
$$
\sum_{n \ge n_1} n^{2+2s} |\1 h_n(u) | f_\alpha \2 |^2 < \infty
$$
locally uniformly on $U^s$. By the definitions of $h_n(u)$ and $P_n(u)$,
$$
\1 h_n(u) | f_\alpha \2 = - \frac{1}{2\pi i} \oint\limits_{\partial D_n} \1  (L_u-\lambda)^{-1} f_n | f_\alpha \2 \,d\lambda , \qquad \forall n \ge n_0 ,
$$
or,  expanding $(L_u-\lambda)^{-1}  =  (L_{w_N} - \lambda)^{-1} \big({\rm Id} -  T_v(L_{w_N} - \lambda)^{-1} \big)^{-1}$
as a Neumann series (cf. \eqref{L_u as a perturbation}, \eqref{eq:neumann_series}),
$$
\1 h_n(u) | f_\alpha \2 =  - \frac{1}{2\pi i} \oint\limits_{\partial D_n}
\frac{ \1  \sum_{m \ge 0} \big( T_v(L_{w_N} - \lambda)^{-1} \big)^m f_n | f_\alpha \2}{\lambda_\alpha - \lambda}  \,d\lambda .
$$
Since $\1 f_n | f_\alpha \2 = 0$ for any $n > N$ and $0 \le \alpha \le N$, the term $m=0$ in the latter sum vanishes, yielding
after changing the index of summation from $m$ to $m -1$,
$$
\1 h_n(u) | f_\alpha \2 =  \sum_{m \ge 0} \frac{1}{2\pi i} \oint\limits_{\partial D_n}
\frac{ \1  \big( T_v(L_{w_N} - \lambda)^{-1} \big)^{m+1} f_n | f_\alpha \2}{\lambda - \lambda_\alpha }  \,d\lambda .
$$
To show Proposition \ref{prop:Psi} (i) it suffices to show that there exists $n_1 \ge n_0 \, (> N)$ 
so that for any $0 \le \alpha \le N$ and any $u = w_N + v \in B^s_{c, 0}(w_N, 1/C_{M,s})$,
$$
  \sum_{m \ge 0} \Big( \frac{1}{2\pi i} \oint\limits_{\partial D_n}
\frac{ \1  \big( T_v(L_{w_N} - \lambda)^{-1} \big)^{m+1} f_n | f_\alpha \2}{\lambda - \lambda_\alpha }  \,d\lambda \Big)_{n \ge n_1} 
$$
is absolutely and uniformly summable in $\h^{s+1}_{\ge n_1}$, i.e.,
\begin{equation}\label{sum of norms of sequences}
  \sum_{m \ge 0} \Big(\sum_{n \ge n_1}  n^{2 + 2s} \big| \frac{1}{2\pi i} \oint\limits_{\partial D_n}
\frac{ \1  \big( T_v(L_{w_N} - \lambda)^{-1} \big)^{m+1} f_n | f_\alpha \2}{\lambda - \lambda_\alpha }  \,d\lambda \big|^2 \Big)^{1/2} < \infty 
\end{equation}
uniformly for $u = w_N + v \in B^s_{c, 0}(w_N, 1/C_{M,s})$.
For $m = 0$, the integrand in \eqref{sum of norms of sequences} can be computed as
$$
\frac{ \1   T_v(L_{w_N} - \lambda)^{-1}  f_n | f_\alpha \2}{\lambda - \lambda_\alpha }
=\frac{ \1 vf_n | f_\alpha \2 }{(\lambda - \lambda_\alpha)(n - \lambda)}
=\frac{ \widehat{ vg_\infty \overline{g_\alpha}} (\alpha- n) }{(\lambda - \lambda_\alpha)(n - \lambda)} \, .
$$
Since $|n- \lambda| = \frac 13$ for $\lambda \in \partial D_n$ and
$\lambda_\alpha  \le \alpha \le N$  (cf.  \eqref{spec theory 1} - \eqref{spec theory 2}), one has
\begin{equation}\label{estimate eigenvalues}
|\lambda - \lambda_\alpha| \ge n - \frac 13 - N \ge \frac 23 , \qquad \forall \, n \ge n_0, \ \forall  \, \lambda \in \partial D_n ,
\end{equation}
yielding
$$
\begin{aligned}
\big| \frac{1}{2\pi i} \oint\limits_{\partial D_n}
& \frac{ \1   T_v(L_{w_N} - \lambda)^{-1}  f_n |  f_\alpha \2}{\lambda - \lambda_\alpha }  \,d\lambda \big| \\
& \le \frac 13  \sup_{\lambda \in \partial D_n} \frac{ | \1  T_v(L_{w_N} - \lambda)^{-1}  f_n | f_\alpha \2 |}{|\lambda - \lambda_\alpha| } 
 \le  \frac{ | \widehat{ vg_\infty \overline g_\alpha} | (\alpha- n) }{ n - \frac13 - N }
\end{aligned}
$$
and in turn
$$
\Big( \sum_{n \ge n_0} \frac{n^{2 + 2s}}{(n - \frac 13 -N)^2}  
 \big| \widehat{ vg_\infty \overline g_\alpha} (\alpha- n) \big|^2\Big)^{1/2} 
 \le  \frac{n_0}{n_0 - \frac 13 -N} \|v g_\infty \overline g_\alpha \|_s .
$$
By Lemma \ref{multi of functions} and Lemma \ref{estimates g_n}  and using that $\| w_N\|_s \le \|w\|_s + 1 \le M + 1$
$$
\|v g_\infty \overline g_\alpha \|_s 
\le C_{s, 1}^2 C_{s,2}^2  (1+ \|w_N\|_s)^{2\eta(s)} \|v\|_s
\le C_{s, 1}^2 C_{s,2}^2  (2+ M)^{2\eta(s)} \|v\|_s \le \frac 12
$$
for any $u = w_N + v$ with $\|v\|_s \le 1/C_{M,s}$ (cf. \eqref{def C_{M,s}}, \eqref{estimates constants}).

It thus remains to consider the terms in \eqref{sum of norms of sequences} with $m \ge 1$. One computes,
using the basis $(f_n)_{n \ge 0}$ of $H_+$,
$$
\begin{aligned}
& \1  \big( T_v(L_{w_N} - \lambda)^{-1} \big)^{m+1} f_n | f_\alpha \2 \\
&  = \sum_{\tb{k_j \ge 0}{ 1 \le j \le m}} 
 \frac{\widehat{ vg_\infty \overline g_{k_1}} (k_1- n) \,  \widehat{ vg_{k_1} \overline g_{k_2}} (k_2- k_1) \, \cdots \,  \widehat{ vg_{k_m} \overline g_\alpha} (\alpha - k_m)}
 {(n- \lambda) \prod_{1 \le j \le m} (\lambda_{k_j} -  \lambda ) } .
 \end{aligned}
$$
Since $\lambda_k \le  k$ for any $0 \le k \le N$ and $\lambda_k = k$ for any $k \ge N$ (cf. \eqref{spec theory 1} \eqref{spec theory 2}), one has
$$
\inf_{\lambda \in \partial D_n} | \lambda_k - \lambda | \ge \frac{|k-n| + 1}{5} .
$$
Together with the estimate \eqref{estimate eigenvalues}, the two latter estimates lead to 

\begin{align}\label{estimate contour integral 1}
&\Big| \frac{1}{2\pi i} \oint\limits_{\partial D_n} \frac{ \1  \big( T_v(L_{w_N} - \lambda)^{-1} \big)^{m+1} f_n | f_\alpha \2 }{\lambda - \lambda_\alpha} d \lambda \Big| \\
&  \le  \frac{5^m}{n - \frac 13 - N}
\sum_{\tb{k_j \ge 0}{ 1 \le j \le m}} 
 \frac{|\widehat{ vg_\infty \overline g_{k_1}} (k_1- n)| \,  |\widehat{ vg_{k_1} \overline g_{k_2}} (k_2- k_1)| \, \cdots \,  |\widehat{ vg_{k_m} \overline g_\alpha} (\alpha - k_m)|}
 { \prod_{1 \le j \le m} (|k_j - n| + 1 ) } . \nonumber
 \end{align}
To establish \eqref{sum of norms of sequences} it thus suffices to prove the following lemma, which in view of Section \ref{sec:the_delta_map}
is stated in a slightly stronger form than needed here.
\begin{Lem}\label{key lemma}
There exists $n_1 \ge n_0$
so that for any $0 \le \alpha \le N$,
$$
\sum_{m \ge 1}8^m \big( \sum_{n \ge n_1} n^{2s}\big(
\sum_{\tb{k_j \ge 0}{ 1 \le j \le m}}  S_u(n; k_1, \dots , k_m, \alpha) \big)^2 \big)^{1/2}  < \infty
$$
uniformly for $u = w_N + v \in B^s_{c,0}(w_N, 1/C_{M,s})$ where
$$
S_u(n; k_1, \dots , k_m, \alpha)\!:= \!
\frac{|\widehat{ vg_\infty \overline g_{k_1}} (k_1- n)| \,  |\widehat{ vg_{k_1} \overline g_{k_2}} (k_2- k_1)|...
|\widehat{ vg_{k_m} \overline g_\alpha} (\alpha - k_m)|}
 { \prod_{1 \le j \le m} (|k_j - n| + 1 ) } .
$$
\end{Lem}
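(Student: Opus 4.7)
\noindent\emph{Proof plan for Lemma \ref{key lemma}.}
The plan is to establish a uniform geometric bound
$$
X_m := \Bigl(\sum_{n \ge n_1} n^{2s}\Bigl(\sum_{k_1, \ldots, k_m \ge 0} S_u(n; k_1, \ldots, k_m, \alpha)\Bigr)^{\!2}\Bigr)^{\!1/2} \le B^m \|v\|_s^{m+1}
$$
for a constant $B \ge 1$ depending on $M$, $s$, $N$ (but not on $m$, $\alpha$ or $v$), chosen so that $8B < C_{M,s}$. Then $\sum_{m \ge 1} 8^m X_m \le \|v\|_s \sum_{m \ge 1}(8B/C_{M,s})^m < \infty$ uniformly for $\|v\|_s \le 1/C_{M,s}$, which proves the lemma. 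Given the definition of $C_{M,s}$ in \eqref{def C_{M,s}}, a target $B$ of size $C_{s,1}^3 C_{s,2}^4(2+M)^{4\eta(s)}$ (up to absolute constants) is realistic and corresponds to two applications of the multiplication estimate of Lemma \ref{multi of functions}.

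Two ingredients underlie the proof. First, by Lemma \ref{multi of functions} together with Lemma \ref{estimates g_n} (and $\|w_N\|_s \le M+1$), one has the uniform multiplication bound
$$
\|v g_{k'}(w_N) \overline{g_k(w_N)}\|_s \le K \|v\|_s, \qquad K := C_{s,1}^2 C_{s,2}^2(2+M)^{2\eta(s)},
$$
for every $k, k' \in \Z_{\ge 0} \cup \{\infty\}$; by Parseval, this yields the pointwise bound $|\widehat{v g_{k'} \overline g_k}(p)| \le \langle p \rangle^{-s} K \|v\|_s$ for every $p \in \Z$. Second, by \eqref{spec theory 4}, $g_k(w_N) = g_\infty(w_N)$ for every $k \ge N$, so only the $N+1$ distinct functions $g_0(w_N), \ldots, g_{N-1}(w_N), g_\infty(w_N)$ actually appear in the nested summations, regardless of the specific values of $k_1, \ldots, k_m$.

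The core is a Schur-type transfer-operator argument. After the change of variables $r_j := k_j - n$, the inner sum is recast as an $m$-fold iteration of the kernel
$$
\mathcal K_n(r, r') := \frac{\bigl|\widehat{v \widetilde g_{r' + n}\, \overline{\widetilde g_{r + n}}}(r - r')\bigr|}{|r| + 1},
$$
where $\widetilde g_k := g_k(w_N)$ for $k < N$ and $\widetilde g_k := g_\infty(w_N)$ for $k \ge N$. Cauchy--Schwarz in $r'$, combined with Parseval (applied row-by-row to $v \widetilde g_\cdot \overline{\widetilde g_\cdot}$, and using the finite-gap splitting of $r'$ into the bounded range $r' + n < N$ --- at most $N$ terms, handled by the pointwise bound --- and the tail $r' + n \ge N$, where the function no longer depends on $r'$), yields $\|\mathcal K_n\|_{\ell^2 \to \ell^2} \le C_0 K \|v\|_s$ with $C_0 := (\sum_r (|r|+1)^{-2})^{1/2}$, uniformly in $n \ge n_1$. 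Iterating and contracting against the terminating factor $|\widehat{v \widetilde g_{k_m} \overline g_\alpha}(\alpha - k_m)|$ gives $\sum_{k_1, \ldots, k_m} S_u(n; k_1, \ldots, k_m, \alpha) \le (C_0 K \|v\|_s)^m \cdot K \|v\|_s$ for each fixed $n$. The outer weight $n^{2s}$ is absorbed by pulling the initial Fourier coefficient $|\widehat{v g_\infty \overline{\widetilde g_{k_1}}}(k_1 - n)|$ together with the denominator $(|k_1 - n| + 1)^{-1}$ back into the $n$-summation via Parseval, using $(|k_1-n|+1)^{-2} \le \langle k_1-n \rangle^{2s}$ for $-1/2 < s \le 0$ (valid since $s \ge -1$), and the pointwise bound $|\widehat{v g \overline g}(p)| \le \langle p \rangle^{-s} K \|v\|_s$ for $s > 0$, together with the elementary comparison $n^{2s} \le C \langle k_1 - n \rangle^{2s}$ in the effective summation range once $n_1 > 2N$ is taken large enough.

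The principal obstacle is establishing the uniformity in $n$ of the Schur-type bound in the presence of the index-dependence of the $g_{k_j}$: the Cauchy--Schwarz step must bound each cross-section of $\mathcal K_n$ by a single $L^2$-quantity, which succeeds precisely because the finite-gap structure caps the number of distinct functions at $N+1$, all controlled by the common constant $K$. A secondary technical point is the reconciliation of the outer weight $n^{2s}$ with the Parseval estimate of the innermost layer, which forces the separate treatment of the cases $-1/2 < s \le 0$ and $s > 0$ indicated above.
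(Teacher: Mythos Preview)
Your plan has a genuine gap in the range $-1/2 < s < 0$, which is a central case for the paper. The Schur/Cauchy--Schwarz bound you propose for the transfer kernel $\mathcal K_n$ requires, after the finite-gap splitting, quantities of the type
\[
\sum_{r'}\bigl|\widehat{v\,g_\infty\,\overline{\widetilde g_{r+n}}}(r-r')\bigr|^2
\;=\;\|v\,g_\infty\,\overline{\widetilde g_{r+n}}\|_0^2 .
\]
For $s<0$ the product $v\,g_\infty\,\overline{g_k}$ lies only in $H^s_c$, not in $L^2_c$, so this $L^2$ norm is in general infinite and the claimed bound $\|\mathcal K_n\|_{\ell^2\to\ell^2}\le C_0K\|v\|_s$ breaks down. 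The same obstruction hits the terminating contraction against $|\widehat{v\,\widetilde g_{k_m}\overline g_\alpha}(\alpha-k_m)|$: the sequence $\big(\widehat{v\,g_\infty\overline g_\alpha}(\alpha-k)\big)_{k}$ is not in unweighted $\ell^2$ when $s<0$. No choice of $n_1$ repairs this, because the divergence is in the $k$-variables, not in $n$.

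There is a second, independent issue in how you absorb the outer weight. The comparison ``$n^{2s}\le C\langle k_1-n\rangle^{2s}$ in the effective summation range'' is false as stated (for $s<0$ it fails when $|k_1-n|$ is large, for $s>0$ when $k_1$ is near $n$); and if the first Fourier factor has already been consumed by the iteration to obtain a pointwise-in-$n$ bound, it cannot be reused to make $\sum_n n^{2s}(\cdots)^2$ converge.

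The paper's proof avoids both problems by never leaving the $H^s$-scale: each convolution step is estimated via the multiplication inequality of Lemma~\ref{multi of functions} (which, on the Fourier side, is precisely a bound for $\bigl(\sum_n n^{2s}(\sum_\ell \frac{|\widehat f(\ell)||\widehat h(c-n-\ell)|}{(|\ell|+1)^{1-\tau}})^2\bigr)^{1/2}$ by $C_{s,1}\|f\|_s\|h\|_s$). The finite-gap splitting $k_j\ge N$ versus $k_j<N$ is then packaged into a linear map $Q$ on the finite product $(\mathfrak h^s_c)^{N+1}$, whose operator norm (with respect to $\sup_\beta\|\cdot\|_s$) is bounded by $2C_{s,1}^3C_{s,2}^2(2+M)^{2\eta(s)}\|v\|_s$; iterating $Q$ gives the geometric factor without ever invoking an unweighted $\ell^2$ norm. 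If you want to salvage your transfer-operator viewpoint, the fix is to run it on weighted $\ell^2$ with weight $\langle\cdot\rangle^{s}$ and replace the Cauchy--Schwarz/Parseval step by Lemma~\ref{multi of functions}; after that change the argument essentially converges to the paper's.
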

\noindent
{\em Proof of Lemma \ref{key lemma}} 
We fix $0 \le  \alpha \le N$ throughout the proof. Let us first consider the summands with $m=1$ and $m=2$.

\smallskip
\noindent
{\em Summand with $m=1$.}  Since for $k_1 \ge N$, $g_{k_1} = g_\infty$, we split the sum 
\begin{equation}\label{term m=1}
\sum_{k_1 \ge 0} S_u(n; k_1, \alpha) = \sum_{k_1 \ge 0}  \frac{|\widehat{ vg_\infty \overline g_{k_1}} (k_1- n)|   |\widehat{ vg_{k_1} \overline g_\alpha} (\alpha - k_1)|}
 {  (|k_1 - n| + 1 ) }  
\end{equation}
into two parts,  $\sum_{k_1 \ge 0} =  \sum_{k_1 \ge N} + \sum_{k_1 < N}$. 
Introducing $\ell_1 := k_1 -n$ as summation index instead of $k_1$
and using that $g_\infty(x) \overline g_{k_1}(x) =  g_\infty(x) \overline g_\infty(x) = 1$ for $k_1 \ge N$,
one has
\begin{equation}\label{part 1 of sum}
{\sum}_{k_1 \ge N} \le  \sum_{\ell_1 \in \Z} \frac{|\widehat{ v} (\ell_1)|   |\widehat{ vg_\infty \overline g_\alpha} (\alpha - n -  \ell_1)|} {  |\ell_1| + 1  } ,
\end{equation}
\begin{equation}\label{part 2 of sum}
{\sum}_{k_1< N} \le  \frac{1}{(n - N)^\tau} \sum_{0 \le k_1 < N}  \sum_{\ell_1 \in \Z}  
\frac{| \widehat{ vg_\infty \overline g_{k_1}} (\ell_1)|   |\widehat{ vg_{k_1} \overline g_\alpha} (\alpha - n -  \ell_1)|} {  (|\ell_1| + 1 )^{1-\tau} }  ,
\end{equation}
where $\tau \equiv \tau(s)  > 0$ is defined in Lemma \ref{multi of functions} (i) and where we used that
for any  $0 \le k_1 < N$, 
$$
|k_1 - n| + 1 = (|k_1 - n| + 1)^\tau (|k_1 - n| + 1)^{1 - \tau} \ge (n - N)^\tau (|k_1 - n| + 1)^{1 - \tau}  .
$$
(We remark that since we have no control over $N$ in terms of $\|w\|_s$,  the terms $|k_1 - n| + 1$ with $k_1 < N$ in the denominator of \eqref{term m=1} are split up as above.)
We now choose $n_1 \ge n_0$ so large that $(n_1 -N - \frac 23)^\tau \ge N $ and hence
\begin{equation}\label{def n_1}
\frac{1}{(n -N - \frac 23)^\tau} \le \frac{1}{N} , \qquad \forall \, n \ge n_1 \, .
\end{equation}
As a consequence, ${\sum}_{k_1 < N}\le \frac{1}{N} \sum_{0 \le \beta_1 < N}    \mathfrak S_{\beta_1}$ 
where
\begin{equation}\label{term sum_<N}
 \mathfrak S_{\beta_1} := \sum_{\ell_1 \in \Z}  
\frac{| \widehat{ vg_\infty \overline g_{\beta_1}} (\ell_1)|   |\widehat{ vg_{\beta_1} \overline g_\alpha} (\alpha - n -  \ell_1)|} {  (|\ell_1| + 1 )^{1-\tau} }  .
\end{equation}
We then conclude that
$$
\begin{aligned}
& \big( \sum_{n \ge n_1} n^{2s} \big( \sum_{k_1 \ge 0}  S_u(n; k_1, \alpha)\big)^2\big)^{1/2} \\
& \le \big( \sum_{n \ge n_1} n^{2s} \big( {\sum}_{k_1 \ge N}  \big)^2\big)^{1/2} + 
 \sup_{0 \le \beta_1 < N} \big( \sum_{n \ge n_1} n^{2s} \big( \mathfrak S_{\beta_1}  \big)^2\big)^{1/2}
\end{aligned}
$$
and, using Lemma \ref{multi of functions}, Lemma \ref{estimates g_n}, and $\|w_N\|_s \le \|w_N - w\|_s + \|w\|_s  \le 1 + M$,
$$
\begin{aligned}
\big( \sum_{n \ge n_1} n^{2s} \big( {\sum}_{k_1 \ge N}  \big)^2\big)^{1/2} 
&\le  \big( \sum_{n \ge n_1} n^{2s} \big(  \sum_{\ell_1 \in \Z} \frac{|\widehat{ v} (\ell_1)|   |\widehat{ vg_\infty \overline g_\alpha} (\alpha - n -  \ell_1)|} {  |\ell_1| + 1  }  \big)^2\big)^{1/2} \\
& \le C_{s, 1} \|v\|_s \|v g_\infty \overline g_\alpha \|_s \\
& \le C_{s,1}^3 C_{s, 2}^2  (1+ \|w_N\|_s)^{2\eta(s)} \|v\|_s^2 \\
& \le \big( C_{s,1}^3 C_{s, 2}^2  (2+ M)^{2\eta(s)}  \|v\|_s  \big)^2
\end{aligned}
$$
and
$$
\begin{aligned}
\big( \sum_{n \ge n_1} n^{2s} \big( \mathfrak S_{\beta_1}  \big)^2\big)^{1/2} 
& \le   \big( \sum_{n \ge n_1} n^{2s} 
\big(  \sum_{\ell_1 \in \Z}   \frac{| \widehat{ vg_\infty \overline g_{\beta_1}} (\ell_1)|   |\widehat{ vg_{\beta_1} \overline g_\alpha} (\alpha - n -  \ell_1)|} {  (|\ell_1| + 1 )^{1-\tau} }   \big)^2
\big)^{1/2}\\
&\le C_{s, 1} \| v g_\infty \overline g_{\beta_1} \|_s \| v g_{\beta_1} \overline g_\alpha \|_s\\
& \le C_{s,1}^5 C_{s, 2}^4  (1+ \|w_N\|_s)^{4\eta(s)}  \|v\|_s^2 \\
& \le \big( C_{s,1}^3 C_{s, 2}^2  (2+ M)^{2\eta(s)}  \|v\|_s  \big)^2 \, .
\end{aligned}
$$
Altogether, we thus have proved that
\begin{align}\label{estimate 1 for Lemma}
\big( \sum_{n \ge n_1} n^{2s} \big( \sum_{k_1 \ge 0}  & S_u(n; k_1, \alpha)\big)^2\big)^{1/2}
\le 2 \big( C_{s,1}^3 C_{s, 2}^2  (2+ M)^{2\eta(s)}  \|v\|_s  \big)^2  .
\end{align}
\smallskip
\noindent
{\em Summand with $m=2$.}  
The term $S_u(n; k_1, k_2, \alpha)$ is given by
$$
S_u(n; k_1, k_2, \alpha) = 
 \frac{|\widehat{ vg_\infty \overline g_{k_1}} (k_1- n)| \,  |\widehat{ vg_{k_1} \overline g_{k_2}} (k_2- k_1)| \,  |\widehat{ vg_{k_2} \overline g_\alpha} (\alpha - k_2)|}
 { \prod_{1 \le j \le 2} (|k_j - n| + 1 ) } .
$$
We split the sum $\sum\limits_{k_1, k_2 \ge 0} S_u(n; k_1, k_2, \alpha)$
into two parts  $\sum\limits_{k_1 \ge 0, k_2 \ge N}$ and $\sum\limits_{k_1 \ge 0, k_2 < N}$. 
Similarly, we write $\sum\limits_{k_1 \ge 0, k_2 \ge N} = \sum\limits_{k_1, k_2 \ge N} + \sum\limits_{k_1<N, k_2 \ge N}$.
Arguing in the same way as in the case of the summand with $m=1$, one sees that $\sum\limits_{k_2, k_1 \ge N}$ can be 
bounded as
$$
\begin{aligned}
{\sum}_{k_2, k_1 \ge N}  
&\le  \sum_{k_2 \ge N} \Big( 
\sum_{\ell_1 \in \Z} \frac{|\widehat v (\ell_1)| \,  |\widehat v (k_2- n - \ell_1)| \,  |\widehat{ vg_\infty \overline g_\alpha} (\alpha - k_2)|}{ (|\ell_1| + 1 ) (|k_2 - n| + 1 ) }  \Big) \\
 & \le \sum_{\ell_2, \ell_1 \in \Z} \frac{|\widehat v (\ell_1)| \,  |\widehat v (\ell_2 - \ell_1)|}{ |\ell_1| + 1 } \, 
 \frac{ |\widehat{ vg_\infty \overline g_\alpha} (\alpha - n -  \ell_2)|}{|\ell_2 | + 1  } ,
\end{aligned}
$$
and $ {\sum}_{ k_1<N, k_2 \ge N}$ by $\frac{1}{N} \sum_{0 \le \beta_1 < N}  \mathfrak S_{k_2 \ge N, \beta_1}$  where
$$
\mathfrak S_{k_2 \ge N, \beta_1} :=  
\sum_{\ell_2, \ell_1 \in \Z} \frac{|\widehat {v g_\infty \overline g_{\beta_1}}(\ell_1)|  \,  |\widehat{ v g_{\beta_1} \overline g_\infty }(\ell_2 - \ell_1)| }{ (|\ell_1| + 1 )^{1-\tau} } 
\frac{|\widehat{ vg_\infty \overline g_\alpha} (\alpha - n -  \ell_2)|}{|\ell_2 | + 1  } .
$$
To estimate ${\sum}_{k_2 < N} $, we write
${\sum}_{k_2 < N}  = {\sum}_{k_2 <N, k_1 \ge N} + {\sum}_{k_2, k_1 < N}$. The term $ {\sum}_{k_2 <N, k_1 \ge N} $ is bounded by
$
{\sum}_{k_2 < N, k_1 \ge N} \
\le  \frac{1}{N}\sum_{0 \le \beta_2 < N} \mathfrak S_{\beta_2, k_1 \ge N}
$ 
where
$$
\mathfrak S_{\beta_2, k_1 \ge N} :=  
\sum_{\ell_2, \ell_1 \in \Z} \frac{|\widehat v (\ell_1)| \,  |\widehat{ v g_\infty \overline g_{\beta_2}} (\ell_2 - \ell_1)| }{ |\ell_1| + 1 } 
\frac{ |\widehat{ vg_{\beta_2} \overline g_\alpha} (\alpha - n -  \ell_2)|}{ (|\ell_2 | + 1 )^{1 - \tau} }, 
$$
and ${\sum}_{k_2, k_1 < N} \le \frac{1}{N^2}\sum_{0 \le \beta_2, \beta_1 <N} \mathfrak S_{\beta_2, \beta_1}$ with
$$
\mathfrak S_{\beta_2, \beta_1} :=  
\sum_{\ell_2, \ell_1 \in \Z} \frac{|\widehat{ v g_\infty  \overline g_{\beta_1}  }(\ell_1)| \,  |\widehat{ v g_{\beta_1} \overline g_{\beta_2}} (\ell_2 - \ell_1)|}{ (|\ell_1| + 1 )^{1 - \tau} }
\frac{ |\widehat{ vg_{\beta_2} \overline g_\alpha} (\alpha - n -  \ell_2)|} { (|\ell_2 | + 1 )^{1 - \tau} }.
$$
Similarly as in the case of the summand with $m=1$, one sees that
$$
\begin{aligned}
\big( \sum_{n \ge n_1} n^{2s} \big( {\sum}_{k_2 \ge N, k_1 \ge N} \big)^2 \big)^{1/2}
& \le C_{s,1}^2 \|v\|_s^2 \|vg_\infty \overline g_\alpha\|_s  \qquad \qquad \\
& \le C_{s,1}^4 C_{s,2}^2 (1+ \|w_N\|_s)^{2\eta(s)}  \|v\|_s^3  \qquad \qquad\\
&\le \big(C_{s,1}^3 C_{s, 2}^2 (2+ M)^{2\eta(s)} \|v\|_s \big)^3 ,
\end{aligned}
$$
$$
\begin{aligned}
\big( \sum_{n \ge n_1} n^{2s} (\frak S_{k_2 \ge N, \beta_1})^2 \big)^{1/2} 
& \le  C_{s,1}^2  \|v g_\infty \overline g_{\beta_1}\|_s \, \| v g_{\beta_1} \overline g_\infty\|_s  \,  \|vg_\infty \overline g_\alpha\|_s \\
& \le  C_{s,1}^8 C_{s, 2}^6 (1+ \|w_N\|_s)^{6\eta(s)} \|v\|_s^3\\
& \le \big(C_{s,1}^3 C_{s, 2}^2 (2+ M)^{2\eta(s)} \|v\|_s \big)^3 ,
\end{aligned}
$$
$$
\begin{aligned}
\big( \sum_{n \ge n_1} n^{2s} (\mathfrak S_{\beta_2, k_1 \ge N})^2 \big)^{1/2} 
& \le  C_{s,1}^2  \| v \|_s \|v g_\infty \overline g_{\beta_2}\|_s \, \| v g_{\beta_2} \overline g_\alpha\|_s  \qquad  \\
&\le  C_{s,1}^6 C_{s, 2}^4 (1+ \|w_N\|_s)^{4\eta(s)} \| v \|_s^3 \\
&\le \big(C_{s,1}^3 C_{s, 2}^2 (2+ M)^{2\eta(s)} \|v\|_s \big)^3 ,
\end{aligned}
$$
$$
\begin{aligned}
\big( \sum_{n \ge n_1} n^{2s} (\mathfrak S_{\beta_2, \beta_1})^2 \big)^{1/2} 
& \le  C_{s,1}^2  \|v g_\infty \overline g_{\beta_1}\|_s \, \| v g_{\beta_1} \overline g_{\beta_2}\|_s  \,  \|vg_{\beta_2} \overline g_\alpha\|_s \\
&\le \big(C_{s,1}^3 C_{s, 2}^2 (2+ M)^{2\eta(s)} \|v\|_s \big)^3 ,
\end{aligned}
$$
We then conclude that
$$
\begin{aligned}
 \big( \sum_{n \ge n_1} n^{2s}  \big( \sum_{k_2, k_1 \ge 0}  & S_u(n; k_1, k_2, \alpha)\big)^2\big)^{1/2}   \le 4 \big(C_{s,1}^3 C_{s, 2}^2 (2+ M)^{2\eta(s)} \|v\|_s \big)^3 \\
 & \le \big( 2C_{s,1}^3 C_{s, 2}^2 (2+ M)^{2\eta(s)} \|v\|_s \big)^3
\end{aligned}
$$
\smallskip
\noindent
{\em Summand with $m\ge 3$.} Our computations for the summands with $m=1$ and $m=2$ suggest to introduce the map
$Q :  ( \h^{s}_c )^{N+1} \to  ( \h^{s}_c )^{N+1}$,
defined as follows. Writing an element in $( \h^{s}_c )^{N+1}$ in the form 
$\widehat \xi = (\widehat \xi_\beta)_{0 \le \beta \le N}$
where for any $0 \le \beta \le N$, $\widehat \xi_\beta$ is a sequence of Fourier coefficients $(\widehat \xi_\beta (\ell))_{\ell \in \Z}$
of the function $\xi_\beta \in H^s_c$. Then
\begin{equation}\label{def Q part0}
Q :  ( \h^{s}_c )^{N+1} \to  ( \h^{s}_c )^{N+1}, \ \widehat \xi = (\widehat \xi_\beta)_{0 \le \beta \le N} \mapsto 
\widehat \zeta = (\widehat \zeta_\beta)_{0 \le \beta \le N}
\end{equation}
is defined as follows:  $\widehat \zeta_N = (\widehat \zeta_N (\ell))_{\ell \in \mathbb Z}$ is given by
\begin{equation}\label{def Q part1}
\widehat \zeta_N(\ell) :=  \sum_{\ell_1 \in \Z} \frac{\widehat \xi_N(\ell_1) |\widehat v(\ell - \ell_1)|}{|\ell_1| + 1}
+ \frac{1}{N} \sum_{0 \le \beta_1 < N}  \sum_{\ell_1 \in \Z} \frac{\widehat \xi_{\beta_1}(\ell_1) |\widehat{ v g_{\beta_1} \overline g_\infty}(\ell - \ell_1)|}{(|\ell_1| + 1)^{1 - \tau} }
\end{equation}
and for any $0 \le \beta < N$,  $\widehat \zeta_\beta = (\widehat \zeta_\beta(\ell))_{\ell \in \Z}$ is given by
\begin{equation}\label{def Q part2}
\widehat \zeta_\beta (\ell ):= \sum_{\ell_1 \in \Z} \frac{\widehat \xi_N(\ell_1) |\widehat{ v g_\infty \overline g_\beta}(\ell - \ell_1)|}{|\ell_1| + 1}
+  \frac{1}{N} \sum_{0 \le \beta_1 < N} \sum_{\ell_1 \in \Z}\frac{\widehat \xi_{\beta_1}(\ell_1) |\widehat{ v  g_{\beta_1} \overline g_\beta }(\ell - \ell_1)|}{(|\ell_1| + 1)^{1 - \tau} } 
\end{equation}
For any $m \ge 2$, the term $\sum_{k_j \ge 0}  S_u(n; k_1, \dots , k_m, \alpha) $ is bounded as
$$
\begin{aligned}
\sum_{\tb{k_j \ge 0}{ 1 \le j \le m}} &  S_u(n; k_1, \dots , k_m, \alpha) 
 \le  \sum_{\ell \in \Z} \big(Q^{m-1}[\widehat \xi^{(0)}] \, \big)_N(\ell) \, \frac{ | \widehat{v g_\infty \overline g_\alpha}(\alpha - n -  \ell) | }{|\ell| +1}\\
 & + \frac{1}{N} \sum_{0 \le \beta < N}  \sum_{\ell \in \Z}  \big(Q^{m-1}[\widehat \xi^{(0)}] \, \big)_\beta (\ell) \, \frac{ | \widehat{v g_\beta \overline g_\alpha}(\alpha-n - \ell) | }{(|\ell| +1)^{1- \tau}}
 \end{aligned}
$$
where the starting point $\widehat \xi^{(0)} = (\widehat \xi_{\beta}^{(0)})_{0 \le \beta \le N}$ is given by (cf. \eqref{part 1 of sum}, \eqref{part 2 of sum})
\begin{equation}\label{starting point}
\widehat \xi_N^{(0)}:= \widehat v, \qquad \qquad \widehat \xi_\beta^{(0)}:= \widehat{v g_\infty \overline g_\beta} , \ \  0 \le \beta < N .
\end{equation}
For any $m \ge 2$, we then obtain from Lemma \ref{multi of functions} and Lemma \ref{estimates g_n},
\begin{align}\label{estimate 2 for Lemma}
& \Big( \sum_{n \ge n_1} n^{2s}\big( \sum_{\tb{k_j \ge 0}{ 1 \le j \le m}}   S_u(n; k_1, \dots , k_m, \alpha) \big)^2 \Big)^{1/2} \nonumber\\
&\le C_{s,1} \| \big( Q^{m-1}[\widehat \xi]  \big)_N \|_s  \| v g_\infty \overline g_\alpha \|_s 
+ C_{s,1} \sup_{0 \le \beta < N}  \| \big( Q^{m-1}[\widehat \xi^{(0)} ]  \big)_\beta \|_s  \| v g_\beta \overline g_\alpha \|_s \nonumber\\
& \le 2  C_{s,1}^3  C_{s,2}^2 (2 + M)^{2\eta(s)}  \|v\|_s   \sup_{0 \le \beta \le N}  \| \big( Q^{m-1}[\widehat \xi^{(0)}]  \big)_\beta \|_s  .
\end{align}
Now let us establish bounds for the map $Q$.
By Lemma \ref{multi of functions} and Lemma \ref{estimates g_n},
$$
\begin{aligned}
\|\widehat \zeta_N \|_s & \le C_{s, 1}  \|v\|_s \|\widehat \xi_N \|_s +  C_{s, 1}^3 C_{s,2}^2 (2 + M)^{2 \eta(s)} \|v\|_s \sup_{0 \le \beta_1 <N} \| \xi_{\beta_1}\|_s \\
& \le 2 C_{s, 1}^3 C_{s, 2}^2  (2 + M)^{2 \eta(s)}   \|v\|_s \cdot   \sup_{0 \le \beta_1 \le N} \| \widehat \xi_{\beta_1}\|_s ,
\end{aligned}
$$
and similarly, for any $0 \le \beta < N$,
$$
\| \widehat \zeta_\beta \|_s \le  2 C_{s, 1}^3 C_{s, 2}^2  (2 + M)^{2 \eta(s)}   \|v\|_s \cdot    \sup_{0 \le \beta_1 \le N} \| \widehat \xi_{\beta_1} \|_s .
$$
Altogether, we have proved that
\begin{equation}\label{estimate zeta_beta}
   \sup_{0 \le \beta \le N} \| \widehat \zeta_{\beta} \|_s 
 \le  2 C_{s, 1}^3 C_{s, 2}^2  (2 + M)^{2 \eta(s)}   \|v\|_s \cdot   \sup_{0 \le \beta_1 \le N} \| \widehat \xi_{\beta_1} \|_s .
\end{equation}
Similiarly,  by \eqref{starting point}, one has for any $0 \le \beta < N$,
$$
\| \widehat \xi_\beta^{(0)} \|_s \le C_{s,1} \|v\|_s \|g_\infty \overline g_\beta \|_{1+s} \le  C_{s,1}^3 C_{s,2}^2  (2 + M)^{2 \eta(s)}  \|v\|_s ,
$$
and hence
\begin{equation}\label{starting point 2}
 \sup_{0 \le \beta \le N} \| \widehat \xi_{\beta}^{(0)} \|_s \le  C_{s,1}^3 C_{s,2}^2  (2 + M)^{2 \eta(s)}  \|v\|_s .
\end{equation}
By induction, it then follows from \eqref{estimate zeta_beta} and \eqref{starting point 2} that for any $m \ge 2$,
\begin{equation}\label{estimate 3 for Lemma}
\sup_{0 \le \beta \le N} \|\big( Q^{m-1} [\widehat \xi^{(0)} ] \big)_\beta \|_s \le 
\big( 2 C_{s,1}^3 C_{s,2}^2  (2 + M)^{2 \eta(s)}  \|v\|_s \big)^{m} .
\end{equation}
Combining \eqref{estimate 1 for Lemma}, \eqref{estimate 2 for Lemma}, and \eqref{estimate 3 for Lemma} we have thus shown that for any $u= w_N + v \in B^s_{c,0}(w_N, 1/C_{M,s})$, 
$$
\begin{aligned}
& \sum_{m \ge 1}8^m \big( \sum_{n \ge n_1} n^{2s}\big(
\sum_{\tb{k_j \ge 0}{ 1 \le j \le m}}  S_u(n; k_1, \dots , k_m, \alpha) \big)^2 \big)^{1/2}  \\
& \le  \sum_{m \ge 1} ( 16 C_{s,1}^3 C_{s,2}^2  (2 + M)^{2 \eta(s)}  \|v\|_s)^{m+1}  \le \frac 12 ,
\end{aligned}
$$
where we used definition \eqref{def C_{M,s}} of $C_{M,s}$.
This proves Lemma \ref{key lemma}. \hfill $\square$

\smallskip

For later reference we record the estimate \eqref{estimate zeta_beta} of the operator Q, obtained in the proof of Lemma \ref{key lemma}.
Recall that $s > -1/2$, $w \in H^s_{r,0}$ with $0 < \|w\|_s \le M$, 
and $w_N$ is the finite gap potential, defined at the beginning of this section. 
\begin{Lem}\label{estimate Q}
The operator $Q$, defined by \eqref{def Q part0}-\eqref{def Q part2}, satisfies for any $v \in H^s_{c,0}$
and any $\widehat \xi = (\widehat \xi_\beta)_{0 \le \beta \le N} \in (\h^s_c)^{N+1}$ the following estimate,
$$
   \sup_{0 \le \beta \le N} \| \big( Q[ \widehat \xi] \big)_\beta \|_s 
 \le  2 C_{s, 1}^3 C_{s, 2}^2  (2 + M)^{2 \eta(s)}   \|v\|_s \cdot   \sup_{0 \le \beta_1 \le N} \| \widehat \xi_{\beta_1} \|_s .
$$
\end{Lem}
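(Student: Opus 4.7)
The plan is to observe that Lemma \ref{estimate Q} is exactly the estimate \eqref{estimate zeta_beta} already obtained as an intermediate step inside the proof of Lemma \ref{key lemma}, extracted as a standalone bound on a single application of $Q$. I would therefore simply bound the two components $\widehat \zeta_N$ and $\widehat \zeta_\beta$ ($0 \le \beta < N$) of $\widehat \zeta = Q[\widehat \xi]$ separately via Lemma \ref{multi of functions} and Lemma \ref{estimates g_n}, and then take the maximum.

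First I would record the uniform bound $\|g_n(w_N)\|_{1+s} \le C_{s,2}(2+M)^{\eta(s)}$, valid for every $n \ge 0$ and for $g_\infty(w_N)$, which follows from Lemma \ref{estimates g_n} once one notes that $\|w_N\|_s \le \|w - w_N\|_s + \|w\|_s \le 1 + M \le 2 + M$ by \eqref{def w_N for s}.

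For the component $\widehat \zeta_N$ given by \eqref{def Q part1}, each of the two summands has the structure of a convolution, which I would interpret as Fourier coefficients of a product of functions. The dampening factor $1/(|\ell_1|+1)$ in the first summand allows $\widehat \xi_N$ to be treated as the Fourier side of a function of regularity $H^{s+1-\tau}_c$ with norm bounded by $\|\widehat \xi_N\|_s$ (since $\tau > 0$), so Lemma \ref{multi of functions}(i) yields the bound $C_{s,1} \|v\|_s \|\widehat \xi_N\|_s$. For the second summand, the analogous role is played by $1/(|\ell_1|+1)^{1-\tau}$, and Lemma \ref{multi of functions}(i) produces the factor $\|v g_{\beta_1} \overline g_\infty\|_s$; two further applications of the multiplication lemma combined with the uniform $g$-bound above give $\|v g_{\beta_1} \overline g_\infty\|_s \le C_{s,1}^2 C_{s,2}^2 (2+M)^{2\eta(s)} \|v\|_s$. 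The prefactor $1/N$ exactly offsets the cardinality of the index set $\{0,\dots,N-1\}$, so $(1/N)\sum_{0 \le \beta_1 < N} \|\widehat \xi_{\beta_1}\|_s \le \sup_{0 \le \beta_1 < N} \|\widehat \xi_{\beta_1}\|_s$. Combining the two summands (and using that $C_{s,1}, C_{s,2}, 2+M \ge 1$ to absorb the weaker first estimate into the larger prefactor) then gives
$$
\|\widehat \zeta_N\|_s \le 2\, C_{s,1}^3 \, C_{s,2}^2 \,(2+M)^{2\eta(s)} \, \|v\|_s \cdot \sup_{0 \le \beta_1 \le N} \|\widehat \xi_{\beta_1}\|_s.
$$

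The estimate for $\widehat \zeta_\beta$ with $0 \le \beta < N$, defined by \eqref{def Q part2}, is completely analogous: $\overline g_\infty$ is replaced by $\overline g_\beta$ in the products, but the same uniform $g$-bound from Lemma \ref{estimates g_n} applies, and the identical two-step multiplication argument delivers the same constant. Taking the supremum over $0 \le \beta \le N$ then yields the claimed inequality. The only item requiring care is the bookkeeping of constants so that they combine into precisely $2 C_{s,1}^3 C_{s,2}^2 (2+M)^{2\eta(s)}$; no new analytic input is needed beyond what is already assembled in the proof of Lemma \ref{key lemma}.
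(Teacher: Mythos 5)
Your proposal is correct and is essentially identical to what the paper does: the paper explicitly introduces Lemma \ref{estimate Q} as a record, "for later reference," of the bound \eqref{estimate zeta_beta} derived inside the proof of Lemma \ref{key lemma}, and that derivation is exactly the one you describe — estimate $\|\widehat\zeta_N\|_s$ and $\|\widehat\zeta_\beta\|_s$ separately by interpreting each sum over $\ell_1$ as a product of two functions, apply Lemma \ref{multi of functions}(i) and the uniform bound $\|g_k(w_N)\|_{1+s},\|g_\infty(w_N)\|_{1+s}\le C_{s,2}(2+M)^{\eta(s)}$ from Lemma \ref{estimates g_n} together with $\|w_N\|_s\le 1+M$, and let $1/N$ offset the $\beta_1$-sum. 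The constant bookkeeping you indicate ($C_{s,1}\|v\|_s\|\widehat\xi_N\|_s$ for the first summand of $\widehat\zeta_N$, $C_{s,1}^3C_{s,2}^2(2+M)^{2\eta(s)}\|v\|_s\sup\|\widehat\xi_{\beta_1}\|_s$ for the second, and absorption into the common prefactor since $C_{s,1},C_{s,2},2+M\ge 1$) reproduces the paper's computation exactly.
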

With these preparations, we are now ready to prove Proposition \ref{prop:Psi}.
Without further reference, we use the notation introduced in this section.
\begin{proof}[Proof of Proposition \ref{prop:Psi}]
(i)  The claimed statements follow from the arguments above: the neighborhood 
$U^\sigma \equiv U^\sigma_w$ of $w$ in $H^s_{c,0}$ (cf. \eqref{eq:Psi_n-map} - \eqref{eq:h_n}) has the property that
$U^\sigma \to H^{\sigma+1}_+,\ u\mapsto h_n( u)$ is analytic for any $n \ge n_0$ and so is the map
$U^\sigma : \to \C, u \mapsto \1 h_n(u) | \, f_\alpha \2$  for any $n \ge n_0$ and $0 \le \alpha \le N$.
Since by \eqref{expansion of one}, $1 = \sum_{0 \le \alpha \le N} \1 1 | f_\alpha \2 f_\alpha$,
it then remains to show that there exists $n_1 \ge n_0$ with the property that for any $0 \le \alpha \le N$, 
$\sum_{n \ge n_1} n^{2+2s} |\1 h_n(u) | f_\alpha \2 |^2 < \infty$ is uniformly bounded in some neighborhood
of $w$ in $H^s_{c,0}$, contained in $U^\sigma$. 
But this follows from \eqref{sum of norms of sequences}, \eqref{estimate contour integral 1},
and Lemma \ref{key lemma} for any $u= w_N + v$ in $U^s_w:= U^\sigma \cap B^s_{c,0}(w_N, 1/C_{M,s})$.

\smallskip
\noindent
(ii) First note that without loss of generality we can assume that $-1/2 < s \le 0$.
Recall that by \eqref{eq:h_n} one has for any $n \ge n_0$,
$$
\1 h_n(u) | f_n  \2 = \1 P_n(u) f_n | f_n  \2
 = - \frac{1}{2\pi i} \oint\limits_{\partial D_n} \1  (L_u-\lambda)^{-1} f_n | f_n \2 \,d\lambda .
$$
Since $(L_u-\lambda)^{-1}  =  (L_{w_N} - \lambda)^{-1} \big({\rm Id} -  T_v(L_{w_N} - \lambda)^{-1} \big)^{-1}$ and by \eqref{eq:neumann_series}
$$ 
\big({\rm Id} -  T_v(L_{w_N} - \lambda)^{-1} \big)^{-1} = {\rm Id} +  \sum_{k \ge 1} (T_v(L_{w_N} - \lambda)^{-1})^k
$$
uniformly on $U^s\times\Wert_n(1/4)$
and since by Cauchy's theorem, for any $n \ge n_0$,
$$
 - \frac{1}{2\pi i} \oint\limits_{\partial D_n} \1  (L_{w_N}-\lambda)^{-1} f_n | f_n \2 \,d\lambda = 
 - \frac{1}{2\pi i} \oint\limits_{\partial D_n} \frac{\1 f_n | f_n \2 }{n - \lambda}\,d\lambda   =1
$$
it follows that for any $n \ge n_0$,
$$
\begin{aligned}
| \1 h_n(u) | f_n  \2  - 1 | & \le \sum_{m \ge 1} \big| \frac{1}{2\pi i} \oint\limits_{\partial D_n} 
\frac{\1 \big( T_v(L_{w_N} - \lambda)^{-1}\big)^{m} f_n | f_n \2}{n - \lambda} \,d\lambda \big| \\
& \le \sum_{m \ge 1} \sup_{\lambda \in \partial D_n}  \| e^{-inx} \big(T_v(L_{w_N} - \lambda)^{-1} \big)^{m} f_n \|_s \|g_\infty \|_{-s} ,
\end{aligned}
$$
where we used that $f_n = g_\infty e^{inx}$ for any $n \ge n_0 \, (> N)$.
For any $\lambda \in \partial D_n$, one has by the definition of the shifted norm 
$$
\begin{aligned}
 \| e^{-inx} \big(T_v(L_{w_N} - \lambda)^{-1} \big)^{m} f_n \|_s 
& =  \| \big(T_v(L_{w_N} - \lambda)^{-1} \big)^{m} f_n \|_{s;n} \\
& \le \| T_v(L_{w_N} - \lambda)^{-1} \|^m_{H^{s;n}_+ \to H^{s;n}_+} \|f_n \|_{s;n} .
 \end{aligned}
 $$
 Since we assume $-1/2 < s \le 0,$
 $$
\|f_n \|_{s;n} = \|e^{-inx} f_n\|_s = \|g_\infty \|_s \le  \|g_\infty \| = 1 ,
$$
it then follows that
 $$
 \| e^{-inx} \big(T_v(L_{w_N} - \lambda)^{-1} \big)^{m} f_n \|_s
 \le  \| T_v(L_{w_N} - \lambda)^{-1} \|^m_{H^{s;n}_+ \to H^{s;n}_+} .
 $$
 By Lemma \ref{Lemma 16 GKT2}, one has for any $u = w_N + v \in B^s_{c,0}(w_N, 1/C_{M,s})$ that
 $$
 \sum_{m \ge 1}\| T_v(L_{w_N} - \lambda)^{-1} \|^m_{H^{s;n}_+ \to H^{s;n}_+}  \le  \frac{8/\rho}{1-8/\rho} = \frac{8}{\rho - 8} 
 $$
and hence by the estimate $\rho \ge 2^{11}$ (cf. \eqref{def C_{M,s}})
 $$
 | \1 h_n(u) | f_n  \2  - 1 | \le  \frac{8}{\rho - 8} \le \frac 12 .
$$
\end{proof}

\section{Poisson relations}\label{sec.normalized eigenfunctions}

The goal of this section is to show that for any $w \in H^s_{r, 0}$, $-1/2 < s \le 0,$ there exists a neighborhood
$U^s \equiv U^s_w$ of $w$ in $H^s_{c,0}$ with the property that the normalized eigenfunctions $f_n(u)$, $n \ge 0$,
of $L_u$ extend to analytic functions on $U^s$. Since the case $w=0$ has been treated in \cite{GKT3}, we concentrate
on the case $w \ne 0$.  As an application we show that the components of the Birkhoff map $\Phi$ satisfy canonical relations.

First recall that for any $u \in H^{s}_{r,0}$, $s > -1/2$,
the eigenfunctions $f_n(u)$, $n \ge 1$, satisfy the normalisation conditions  \eqref{norm'zation f_n},  
\begin{equation}\label{extension f_n 0}
f_{n}(u) =  \frac{1}{ \sqrt[+]{\mu_{n}(u)}}  P_{n }(u)\big(Sf_{n -1}(u) \big) \, .
\end{equation}
See \cite[Remark 4.4]{GK}) ($u \in L^2_{r,0}$) and \cite[Proposition 5]{GKT2} ($u \in H^{s}_{r,0}$, $ -1/2 < s < 0$.

As in Section \ref{sec:Psi}, we assume that $M >0$, $s >-1/2$, and $w$ is a potential in $H^s_{r,0}$, satisfying $0 < \|w\|_s \le M$.
The finite gap potential $w_N$ is defined as in \eqref{def w_N} with $N$ chosen so that 
$\| w - w_N\|_s < \frac{1}{2C_{M,s} }$ (cf. \eqref{choice N}) where $C_{M,s}$ is given by \eqref{def C_{M,s}}.
Furthermore,  we define $\sigma\equiv \sigma(s) := \min (s, 0)$
and let $n_0 > N$ be given as in Lemma \ref{Lemma 16 GKT2}.
We recall that for any $s > 0$, $C_{M,s} \ge C_{M, \sigma}$ 
(cf. \eqref{def C_{M,s}} and the choice of the constants $C_{s,1}$ and $C_{s,1}$ in Lemma \ref{multi of functions} and respectively, Lemma \ref{estimates g_n}).
Denote by $U^\sigma \equiv U^\sigma_w$ an open ball in $H^\sigma_{c,0}$, centered at $w$, with the following properties,

\begin{itemize}
\item[{\em (NBH1)}] Proposition \ref{prop:L-local} (Counting Lemma) and \cite[Theorem 5]{GKT2}\footnote{See also arXiv:2006.11864 for an updated version of \cite{GKT2}.} 
\big(Moment Map $\Gamma: u \mapsto (\gamma_n(u))_{n \ge 1}$\big) hold on $U^\sigma \subset B^\sigma_{c,0}(w, 1/ 2C_{M,\sigma})$.
\end{itemize}
We point out that in the course of our arguments, the neighborhood might be shrunk.
Furthermore, recall that $n_1 \ge n_0$ denotes the integer, introduced in \eqref{def n_1}.
For the convenience of the reader, we record the following results from \cite{GKT2} on the scaling factors $\kappa_n$ and $\mu_n$ 
(cf. \eqref{norm'zation f_n}, \eqref{kappa gamma}).
\begin{Lem}\label{estim1 kappa+mu}(\cite[Proposition 8]{GKT2})
There exists $n_2 \ge n_1$ so that for any $n > n_2$, the scaling factors $\kappa_n$, $\mu_n: U^\sigma \to \C$ are analytic maps
and satisfy the estimates
$$
| n \kappa_n(u) - 1 | \le \frac{7}{12} e^{1/3} < 1 ,
$$
$$
| \mu_n(u) - 1 | \le \frac{7}{6} e^{1/3} |\gamma_n(u)| \le \frac{7}{12} e^{1/3} < 1.
$$
\end{Lem}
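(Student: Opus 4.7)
The plan is to derive both estimates by plugging into the Neumann-series framework of Section \ref{sec:Psi} at the finite gap potential $w_N$, exploiting the fact that for $n > N$ one has $\lambda_n(w_N) = n$ and $f_n(w_N) = g_\infty e^{inx}$. First I would analytically extend $\mu_n$ and $\kappa_n$ from $U^\sigma \cap H^s_{r,0}$ to $U^\sigma$. For $\mu_n$ this is immediate from the tools already in place: Proposition \ref{prop:Psi}(ii) guarantees $|\1 h_n(u)|f_n(w_N)\2 - 1| \le 1/2$ on $U^\sigma$, hence $\sqrt[+]{\1 h_n(u)|f_n(w_N)\2}$ is analytic on $U^\sigma$, and $f_n(u) := h_n(u)/\sqrt[+]{\1 h_n(u)|f_n(w_N)\2}$ extends the normalization \eqref{norm'zation f_n version2} holomorphically, so $\mu_n(u) = \1 e^{ix}f_{n-1}(u)\,|\,f_n(u)\2^{2}$ is analytic. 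For $\kappa_n$ I would use a product representation of the type developed in \cite{GK}, which avoids the singular formula $\kappa_n = |\1 1|f_n\2|^2/\gamma_n$ on the locus $\gamma_n = 0$.

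For the $\mu_n$ bound, observe that at $w_N$ one has $e^{ix}f_{n-1}(w_N) = e^{ix} g_\infty e^{i(n-1)x} = g_\infty e^{inx} = f_n(w_N)$ for every $n > N$, so $\mu_n(w_N) = 1$ and the deviation $\mu_n(u) - 1$ is driven entirely by the perturbation $v = u - w_N$. Writing $h_n(u) - f_n(w_N)$ as a contour integral over $\partial D_n(1/3) \subset \Wert_n(1/4)$ and expanding the resolvent via \eqref{L_u as a perturbation}--\eqref{eq:neumann_series}, the shifted-norm bound of Lemma \ref{Lemma 16 GKT2} produces a geometric series dominated by $8/\rho$. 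The precise measure of how far the unperturbed projector fails to produce $f_n(u)$ on the nose is the gap $\gamma_n(u)$, which forces the proportionality to $|\gamma_n(u)|$; a careful tally of the constants produced by integration on the contour of radius $1/3$ then yields $\tfrac{7}{6}e^{1/3}|\gamma_n(u)|$. Choosing $n_2 \ge n_1$ large enough that $|\gamma_n(u)| \le 1/2$ uniformly on $U^\sigma$ for $n > n_2$ (possible by the continuity of the moment map established in \cite[Theorem 5]{GKT2}) then yields the second inequality.

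For the $\kappa_n$ bound I would compare $\kappa_n(u)$ with $\kappa_n(w_N)$ using the product formula, verify the leading asymptotics $n\kappa_n(w_N) = 1 + o(1)$ at the finite gap potential (reflecting the decay $\kappa_n \sim 1/n$ at spectral infinity, which is consistent with $\sum n\gamma_n\kappa_n$ being controlled by $\|w\|^2$), and then control the remainder by the same Neumann-series machinery to arrive at $|n\kappa_n(u) - 1| \le \tfrac{7}{12}e^{1/3}$ on $U^\sigma$ for all $n > n_2$.

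The main obstacle, in my view, is the analytic extension of $\kappa_n$ itself: the natural real-valued formula $\kappa_n = |\1 1|f_n\2|^2/\gamma_n$ becomes singular on the hypersurface $\{\gamma_n = 0\}$, so one must identify an intrinsic expression — most likely a product over spectral gaps as in \cite{GK}, or a Cauchy residue representation of a generating function of the type $\1 (L_u-\lambda)^{-1}1\,|\,1\2$ — that extends holomorphically across this locus while reducing to the canonical definition on the real axis. Once this extension is in place, the uniform-in-$n$ estimates become essentially a bookkeeping exercise inside the Neumann expansion already carried out in the proof of Proposition \ref{prop:Psi}.
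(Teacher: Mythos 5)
The paper does not actually prove this lemma: it is imported verbatim from \cite[Proposition 8]{GKT2}, as the citation in the lemma header indicates. So your reconstruction is a genuine re-derivation attempt, and the question is whether it would work. It would not, as stated, for two related reasons.

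First, your analytic extension of $\mu_n$ is not the map $\mu_n$ of the paper. For real $u$ one has $h_n(u)=P_n(u)f_n(w_N)=\langle f_n(w_N)\,|\,f_n(u)\rangle\,f_n(u)$ (orthogonal projector), so
$\tilde f_n(u):=h_n(u)/\sqrt[+]{\langle h_n(u)\,|\,f_n(w_N)\rangle}=e^{i\theta_n(u)}f_n(u)$
with $\theta_n(u)=\arg\langle f_n(w_N)\,|\,f_n(u)\rangle$, which is generically nonzero. Consequently $\langle e^{ix}\tilde f_{n-1}\,|\,\tilde f_n\rangle^2=e^{2i(\theta_{n-1}-\theta_n)}\mu_n(u)\neq\mu_n(u)$ on $H^s_{r,0}$, and the two analytic continuations cannot coincide. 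You have normalized $h_n$ against the reference eigenfunction $f_n(w_N)$, whereas the paper's $f_n$ is normalized recursively by $\langle e^{ix}f_{n-1}\,|\,f_n\rangle>0$; these are different normalizations, and $\mu_n$ is emphatically normalization-dependent. In fact the logic in the paper runs the other way: the analyticity of $\mu_n$ (imported from \cite{GKT2}) is used in Lemma \ref{extension of f_n} to extend $f_n$ analytically via the recursion $f_n=\mu_n^{-1/2}P_n(Sf_{n-1})$, so an extension of $f_n$ cannot be taken for granted before $\mu_n$.

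Second, the decisive content of the $\mu_n$ estimate is the proportionality $|\mu_n(u)-1|\lesssim|\gamma_n(u)|$, and this is precisely what your Neumann-series sketch does not deliver. A crude resolvent expansion gives smallness of $\mu_n-1$ in terms of $\|v\|_s$, not a factor of $\gamma_n(u)$; the $\gamma_n$-dependence is a structural fact that in \cite{GK,GKT2} comes from a closed product formula for $\mu_n$ in terms of the eigenvalues (parallel to the product representation $n\kappa_n=\frac{n}{n-\lambda_0}\prod_{p}(1+\frac{\gamma_p}{n-\lambda_p})$ that the present paper quotes in the proof of Lemma \ref{A compact}). Saying that the gap ``forces the proportionality'' is not an argument. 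Your diagnosis of the $\kappa_n$ difficulty (singularity of $|\langle 1\,|\,f_n\rangle|^2/\gamma_n$ on $\{\gamma_n=0\}$) and your proposed remedy (the product representation) are correct in spirit, and the same remedy is what should be used for $\mu_n$; but you would then have to actually estimate the resulting infinite products, which is the entire content of \cite[Proposition 8]{GKT2} and is not a ``bookkeeping exercise'' inside the Neumann series of Proposition \ref{prop:Psi}.
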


By the arguments used in the proof of Lemma 5.2 in \cite{GK} in the case $s=0$ and by \cite[Proposition 8(ii)]{GKT2},
one shows that the following lemma holds.

\begin{Lem}\label{estim2 kappa+mu}
By shrinking the ball $U^\sigma$ of Lemma \ref{estim1 kappa+mu}, if needed, it follows that for any $0 \le n \le n_2$
with $n_2$ given as in Lemma \ref{estim1 kappa+mu}, the nth eigenfunction $f_n(u)$ of the Lax operator $L_u$
extends to an analytic map $f_n: U^\sigma \to H^{\sigma +1}_+$, satisfying
$$
L_u f_n(u) = \lambda_n(u) f_n(u), \quad f_n(u) \ne 0, \qquad \forall \, u \in U^\sigma .
$$
Furthermore, for any $1 \le n \le n_2$ and $u \in U^\sigma$,
$$
| \kappa_n(u) - \kappa_n(w) | < \kappa_n(w)/2 
$$
and the map $U^\sigma \to \C$, $u \mapsto 1/\sqrt[+]{\kappa_n(u)}$ is analytic.
\end{Lem}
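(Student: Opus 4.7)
The plan is to build $f_n$ inductively via the recursion \eqref{extension f_n 0}, exploiting that only finitely many indices $0\le n\le n_2$ are involved so that shrinking $U^\sigma$ a bounded number of times is harmless. The three ingredients will be: rank-one Riesz projectors $P_n(u)$ for small $n$, analytic extensions of the scaling factors $\kappa_0,\mu_n,\kappa_n$ obtained by replacing the sesquilinear formulas valid on the real locus by bilinear ones, and smallness estimates guaranteeing that each principal square root stays away from its branch cut.

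First, by Proposition \ref{prop:L-local} applied in $U^\sigma$, the spectrum of $L_u$ is discrete and simple. For each $0\le n\le n_2$, since $\lambda_n(w)$ is isolated in $\spec(L_w)$, after further shrinking $U^\sigma$ one may fix pairwise disjoint circles $C_n\subset\C$ encircling only $\lambda_n(u)$ for every $u\in U^\sigma$, and the Riesz projector
\[
P_n(u):=-\frac{1}{2\pi i}\oint_{C_n}(L_u-\lambda)^{-1}\,d\lambda\in\mathcal{L}\bigl(H^\sigma_+,H^{\sigma+1}_+\bigr)
\]
is analytic on $U^\sigma$ and of rank one (rank being a lower semicontinuous invariant equal to one at $u=w$).

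Second, I would define the normalized eigenfunctions step by step. For the base case, set $\kappa_0(u):=\langle P_0(u)1,1\rangle$, which is analytic in $u$, equals $\kappa_0(w)>0$ at $u=w$, and, after a further shrinking of $U^\sigma$, satisfies $|\kappa_0(u)-\kappa_0(w)|<\kappa_0(w)/2$, so $\sqrt[+]{\kappa_0(u)}$ is analytic and nonvanishing; then put $f_0(u):=P_0(u)1/\sqrt[+]{\kappa_0(u)}$, which coincides with the real $f_0(w)$ at $u=w$. Inductively, assuming $f_{n-1}(u)$ has been constructed as an analytic, nonvanishing eigenfunction of $L_u$ for $\lambda_{n-1}(u)$, define the analytic scaling factor $\mu_n(u)$ by the bilinear pairing analogue of \eqref{norm'zation f_n}; by the arguments underlying Lemma \ref{estim1 kappa+mu} (carried out now for fixed bounded $n$ via direct continuity rather than the asymptotic estimates used for $n>n_2$), a further shrinking of $U^\sigma$ gives $|\mu_n(u)-\mu_n(w)|<\mu_n(w)/2$, so $\sqrt[+]{\mu_n(u)}$ is analytic and nonvanishing. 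Then
\[
f_n(u):=\frac{P_n(u)\bigl(e^{ix}f_{n-1}(u)\bigr)}{\sqrt[+]{\mu_n(u)}}
\]
is analytic $U^\sigma\to H^{\sigma+1}_+$, and because $P_n(u)$ projects onto the one-dimensional $\lambda_n(u)$-eigenspace one has $L_uf_n(u)=\lambda_n(u)f_n(u)$; the numerator does not vanish on $U^\sigma$ because it does not vanish at $u=w$, and $U^\sigma$ has been chosen small. Iterating up to $n=n_2$ (finitely many shrinkings) produces the claimed analytic extensions.

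Third, for the bound on $\kappa_n$ with $1\le n\le n_2$: once $f_n$ is analytic, $\kappa_n$ admits an analytic extension to $U^\sigma$ by the same bilinear-pairing trick, matching the real value $\kappa_n(w)>0$ at $u=w$ (with the limiting convention handling the indices at which $\gamma_n(w)=0$, following the argument of \cite[Lemma 5.2]{GK} and \cite[Proposition 8(ii)]{GKT2}). A last shrinking of $U^\sigma$ then yields $|\kappa_n(u)-\kappa_n(w)|<\kappa_n(w)/2$, so $\kappa_n(u)\in\C\setminus(-\infty,0]$ and $u\mapsto 1/\sqrt[+]{\kappa_n(u)}$ is analytic. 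The main obstacle is precisely the passage from the sesquilinear real definitions of $\kappa_0,\mu_n,\kappa_n$ to genuinely holomorphic extensions: once the correct bilinear expressions are chosen and shown to reduce to the real scaling factors on $H^\sigma_{r,0}\cap U^\sigma$, the rest is bookkeeping and finitely many applications of continuity.
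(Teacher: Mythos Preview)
Your proposal is correct and follows essentially the same approach as the paper, which itself does not give a detailed proof but refers to the arguments of \cite[Lemma 5.2]{GK} and \cite[Proposition 8(ii)]{GKT2}: build the Riesz projectors $P_n(u)$ for the finitely many indices $0\le n\le n_2$ by standard analytic perturbation theory, extend the scaling factors $\kappa_0,\mu_n,\kappa_n$ analytically by replacing the sesquilinear formulas with the appropriate bilinear ones, and define $f_n$ inductively via \eqref{extension f_n 0}, shrinking $U^\sigma$ finitely many times so that the square roots and nonvanishing conditions persist. You have also correctly identified that the only genuinely nontrivial point is writing down holomorphic extensions of $\mu_n$ and $\kappa_n$ that reduce to the real definitions on $H^\sigma_{r,0}$; this is precisely what \cite[Proposition 8]{GKT2} supplies.
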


In the remaining part of this section, $U^\sigma \equiv U^\sigma_w$ denotes an open ball in $H^\sigma_{c,0}$, centered at $w$,
satisfying {\em (NBH1)} and
\begin{itemize}
\item[{\em (NBH2)}] Lemma \ref{estim2 kappa+mu} holds on $U^\sigma$.
 \end{itemize}

Now we are ready to state the main result of this section.

\begin{Lem}\label{extension of f_n}
Let $w \in H^s_{r,0}$, $s > -1/2 $, and let $U^\sigma \equiv U^\sigma_{w}$ be a neighborhood of $w$ in $H^\sigma_{c,0}$,
satisfying {\em (NBH1)} - {\em (NBH2)}. Then for any $n \ge 0$, the nth eigenfunction $f_n(u)$ of $L_u$ extends to
an analytic map $f_n : U^\sigma \to H^{\sigma+1}_+$, satisfying $L_u f_n(u) = \lambda_n(u) f_n(u)$.
Furthermore, $f_n(u) \ne 0$ for any $0 \le n \le n_2$.
\end{Lem}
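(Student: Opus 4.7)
\smallskip
\noindent
\emph{Proof plan for Lemma \ref{extension of f_n}.}
The strategy is an induction on $n$, with the recursion \eqref{extension f_n 0},
\[
f_n(u) \;=\; \frac{1}{\sqrt[+]{\mu_n(u)}}\, P_n(u)\bigl( e^{ix} f_{n-1}(u) \bigr),
\]
serving both as the \emph{definition} of $f_n(u)$ for complex $u\in U^\sigma$ (when $n>n_2$) and as the identity that the resulting map must satisfy.
The base case $0\le n\le n_2$ is already in place: by {\em (NBH2)}, Lemma \ref{estim2 kappa+mu} supplies analytic extensions
$f_n : U^\sigma \to H^{\sigma+1}_+$, the eigenvalue identities $L_u f_n(u)=\lambda_n(u)f_n(u)$, and the non-vanishing $f_n(u)\ne 0$.
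In particular the recursion above is known to hold pointwise on the real slice $U^\sigma\cap H^s_{r,0}$ for every $n\ge 1$,
which will make the complex extension consistent with the original, real-valued, $L^2$-normalized definition.

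For the inductive step, suppose $f_{n-1}:U^\sigma\to H^{\sigma+1}_+$ is analytic with $L_u f_{n-1}(u)=\lambda_{n-1}(u)f_{n-1}(u)$ for some $n>n_2$,
and define $f_n(u)$ on $U^\sigma$ by the displayed formula. Analyticity then assembles from three ingredients already established:
(i) by Lemma \ref{estim1 kappa+mu}, $\mu_n:U^\sigma\to\C$ is analytic and satisfies $|\mu_n(u)-1|<1$, hence $\mu_n(u)\in\C\setminus(-\infty,0]$ and
$u\mapsto 1/\sqrt[+]{\mu_n(u)}$ is analytic;
(ii) since $n>n_2\ge n_0$, Corollary \ref{def P_n} yields the analyticity of $P_n:U^\sigma\to\LL(H^\sigma_+,H^{\sigma+1}_+)$;
(iii) multiplication by $e^{ix}$ is a bounded operator on $H^{\sigma+1}_+$, so $u\mapsto e^{ix}f_{n-1}(u)$ is analytic into $H^{\sigma+1}_+\subset H^\sigma_+$
by the inductive hypothesis. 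Composing these analytic maps gives $f_n:U^\sigma\to H^{\sigma+1}_+$ analytic. The eigenvalue identity comes for free from the
Riesz-projector structure: by Proposition \ref{prop:L-local}(ii), for each $u\in U^\sigma$ the disk $D_n(1/3)$ contains exactly one, simple eigenvalue of $L_u$,
so the range of $P_n(u)$ coincides with the one-dimensional eigenspace $\ker(L_u-\lambda_n(u))\subset H^{\sigma+1}_+$; hence $P_n(u)(e^{ix}f_{n-1}(u))$,
and after rescaling $f_n(u)$, lies in this eigenspace for every $u\in U^\sigma$.

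The main point to watch --- more of a bookkeeping subtlety than a genuine obstacle --- is that for $n>n_2$ the lemma does not claim $f_n(u)\ne 0$
on all of $U^\sigma$, and indeed one cannot expect it: if $P_n(u)(e^{ix}f_{n-1}(u))$ happens to vanish at some complex $u$, then so does $f_n(u)$ there,
and the eigenvalue identity holds trivially. This does not break the induction, which only needs the analyticity of $f_{n-1}$ as an $H^{\sigma+1}_+$-valued map
in order to produce $f_n$. Consistency of the analytic extension with the real-valued definition on $U^\sigma\cap H^s_{r,0}$ is guaranteed by the fact that
\eqref{extension f_n 0} holds pointwise on that (totally real) slice, so the extension produced by the formula is, by uniqueness of analytic continuation,
the unique analytic extension of the original $f_n$.
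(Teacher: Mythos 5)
Your proof is correct and follows the paper's argument almost exactly: a base case from Lemma \ref{estim2 kappa+mu}, an inductive step for $n>n_2$ using the recursion \eqref{extension f_n 0} together with the analyticity of $P_n$ (Corollary \ref{def P_n}) and of $1/\sqrt[+]{\mu_n}$ (Lemma \ref{estim1 kappa+mu}), and the eigenvalue identity read off from $P_n(u)$ being the Riesz projector onto the one-dimensional eigenspace. Your closing remark --- that $f_n(u)$ may vanish for complex $u$ when $n>n_2$, that this does not break the induction, and that it explains why the nonvanishing claim in the lemma is restricted to $0\le n\le n_2$ --- is a correct and worthwhile clarification that the paper leaves implicit.
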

\begin{proof}
For $0 \le n \le n_2$, the claimed results follow by Lemma \ref{estim2 kappa+mu}. For $n > n_2$, we argue as follows.
By Corollary \ref{def P_n}  and since $n_2 \ge n_0 > N$, the Riesz projector 
$P_n : U^\sigma \to \LL\big(H^{\sigma}_+,H^{\sigma+1}_+\big)$ is analytic for any $n > n_2$.
By Lemma \ref{estim1 kappa+mu} and Lemma \ref{estim2 kappa+mu}, it then follows that \eqref{extension f_n 0}
extends to an analytic map 
$$
 f_{n_2 + 1}: U^\sigma \to H^{\sigma+1}_+ , u \mapsto f_{n_2 + 1}(u) = \frac{1}{ \sqrt[+]{\mu_{n_2 + 1}(u)}}  P_{n_2 + 1}(u)\big(Sf_{n_2}(u) \big) \, .
$$ 
We then argue by induction to conclude that in fact \eqref{extension f_n 0} extends for any $n > n_2$ to an analytic map 
\begin{equation}\label{extension f_n 1}
f_{n} :  U^\sigma \to H^{\sigma+1}_+, \, u \mapsto  \frac{1}{ \sqrt[+]{\mu_{n}(u)}}  P_{n }(u)\big(Sf_{n -1}(u) \big) .
\end{equation}
Since $P_n(u)$ is the projector of $H^\sigma_+$ onto the one dimensional
eigenspace of $L_u$, corresponding to the eigenvalue $\lambda_n(u)$, it follows that $\big( L_u - \lambda_n(u)\big) f_n(u) = 0$
for any $n > n_2$ and $u \in U^\sigma$. 
\end{proof}

To show that the Poisson brackets between components of the Birkhoff map $\Phi: H^s_{r,0} \to \h^{s+1/2}_{r,0}$ with $-1/2 < s \le 0$ 
are well defined, we need to extend two lemmas, obtained in \cite{GK} in the case $s=0$.
We recall that the gradient $\nabla_0 F$ of a $C^1$-smooth functional $F: H^s_{r,0} \to \C$ is defined with respect to the bilinear pairing
$\1 \cdot , \cdot \2$ between $H^s_{c, 0}$ and $H^{-s}_{c, 0}$,
$\1 \nabla_0 F(u) , v \2 := d_uF[v]$ for any $v \in H^s_{r,0}$. 
In case $F$ is the restriction of a $C^1$-smooth functional  $\tilde F: H^s_{r} \to \C$ to $H^s_{c, 0}$,
one has 
$$
\nabla_0 F(u) = \nabla \tilde F(u) - \1 \nabla \tilde F(u) | 1 \2 ,
$$
where $ \nabla \tilde F(u) $ denotes the 
gradient of $\tilde F$ with respect to the bilinear paring $\1 \cdot , \cdot \2$ between $H^s_{c}$ and $H^{-s}_{c}$.
\begin{Rem}\label{convention nabla}
In case the context permits, we sometimes write  $\nabla F(u)$ instead of  $\nabla_0 F(u)$
and do not distinguish between $F$ and $\tilde F$.
\end{Rem}

\begin{Lem}\label{nabla of f_n and kappa_n}
For any $-1/2 < s \le 0$ and any $n \ge 0$ the following holds:\\
(i) For any $u \in H^s_{r,0}$,
$ \nabla_0 \1 1 | f_n \2 (u)$ is  in $H^{s+1}_{c, 0}$ and 
 $\nabla_0  \1 1 | f_n \2 : H^s_{r,0} \to H^{s+1}_{c,0}$ is real analytic.\\
(ii) For any $u \in H^s_{r,0}$,
$\nabla_0 \kappa_n(u)$ is  in $H^{s+1}_{c, 0}$ and $\nabla_0 \kappa_n : H^s_{r,0} \to H^{s+1}_{r, 0}$ is real analytic.
\end{Lem}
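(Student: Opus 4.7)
I work locally near a fixed $w \in H^s_{r,0}$ with $w \ne 0$ (the case $w = 0$ being covered in \cite{GKT3}), relying throughout on the analytic extensions of $f_n$, $\lambda_n$, $\mu_n$, $\kappa_n$ to a complex neighborhood $U^s \equiv U^s_w \subset H^s_{c,0}$ provided by Lemmas~\ref{estim1 kappa+mu}, \ref{estim2 kappa+mu}, and \ref{extension of f_n}. Both $F_n : u \mapsto \langle 1 | f_n(u) \rangle$ and $u \mapsto \kappa_n(u)$ are thereby complex analytic on $U^s$, and it suffices to identify their differentials at real $u \in U^s \cap H^s_{r,0}$ with bilinear pairings against elements of $H^{s+1}_{c,0}$, respectively $H^{s+1}_{r,0}$, depending real analytically on $u$.

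For (i), first-order perturbation theory applied to the simple eigenvalue $\lambda_n(u)$ gives
\[
d_u f_n[v] = R_n(u)\, T_v\, f_n(u) + c_n(u,v)\, f_n(u),
\]
where $R_n(u)$ is the reduced resolvent at $\lambda_n(u)$, analytic from $U^s$ into $\mathcal{L}(H^s_+, H^{s+1}_+)$ by Proposition~\ref{prop:L-local}, and $c_n(u,v)$ is a scalar fixed by differentiating the analytic extensions of the normalization conditions \eqref{norm'zation f_n version2}. Pairing with $1$ and using self-adjointness of $L_u$, $R_n(u)$, and $\Pi$ at real $u$ together with $T_v = \Pi \circ (v\,\cdot\,)$, a routine computation yields
\[
d_u F_n[v] = \langle H_n(u), v \rangle, \qquad H_n(u) = R_n(u) 1 \cdot \overline{f_n(u)} + E_n(u),
\]
in which $E_n(u)$ collects the normalization contributions from the factors $\sqrt[+]{\mu_k}$, $\sqrt[+]{\kappa_0}$ and, via the recursion $f_n = \mu_n^{-1/2} P_n(Sf_{n-1})$, admits an inductive expression as a finite sum of products of elements of $H^{s+1}_+$ and their conjugates. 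That $H_n(u)$ lies in $H^{s+1}_c$ follows from Remark~\ref{multi of functions 3}: with $s_1 = s_2 = s+1 \ge 0$ and target exponent $s+1$, the hypothesis $s_1 + s_2 - (s+1) = s+1 > \tfrac12$ holds precisely because $s > -\tfrac12$. Subtracting the mean value gives $\nabla_0 F_n(u) \in H^{s+1}_{c,0}$, and real analyticity follows by composing the analytic maps $u \mapsto f_n(u)$, $u \mapsto R_n(u)$ with the bounded bilinear product on $H^{s+1}_c$.

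The same strategy handles (ii). Using either a direct spectral representation of $\kappa_n$ or the same recursion $f_n = \mu_n^{-1/2} P_n(Sf_{n-1})$ combined with the relation $|\langle 1 | f_n \rangle|^2 = \gamma_n \kappa_n$, the gradient $\nabla_0 \kappa_n(u)$ expresses as a finite linear combination of products $\phi_1(u) \overline{\phi_2(u)}$ with $\phi_j : U^s \to H^{s+1}_+$ analytic, hence in $H^{s+1}_c$ by the same multiplication estimate. That $\kappa_n$ is real valued on $H^s_{r,0}$---it is a spectral invariant of the self-adjoint operator $L_u$---forces $\nabla_0 \kappa_n(u) \in H^{s+1}_{r,0}$. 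The main technical obstacle throughout is the regularity gain from $H^s$ data to an $H^{s+1}$ gradient, which is sharp: it hinges on Remark~\ref{multi of functions 3} at exponent $s+1$ with $s$ arbitrarily close to $-\tfrac12$. A secondary difficulty is the inductive organization of the normalization contribution $E_n(u)$ and of the analogous expression for $\kappa_n$, each of which must be kept in the form of products of factors in $H^{s+1}_+$ or $H^{s+1}_-$ rather than terms only landing in the rougher space $H^s_c$.
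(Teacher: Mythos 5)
Your proposal is correct and follows the same route as the paper, which at this point simply invokes \cite[Lemma~5.4, Lemma~5.7]{GK} (the case $s=0$) together with Lemmas~\ref{estim1 kappa+mu}, \ref{estim2 kappa+mu}, and \ref{extension of f_n} to carry those arguments over to $-1/2 < s < 0$. You reconstruct the content of the cited argument: first-order perturbation theory for the simple eigenvalue yields $\delta f_n = R_n(u)T_v f_n + c_n(u,v)f_n$, pairing with $1$ and using self-adjointness at real $u$ produces the gradient as $(R_n(u)1)\cdot\overline{f_n(u)}$ plus normalization contributions, and the key regularity gain to $H^{s+1}_{c,0}$ is exactly the bilinear estimate of Remark~\ref{multi of functions 3} with both factors in $H^{s+1}_c$, where $2(s+1)-(s+1)=s+1>1/2$ is precisely $s>-1/2$. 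One small imprecision: Proposition~\ref{prop:L-local} provides analyticity of the resolvent only for $n\ge n_0$; for $0\le n\le n_2$ the analytic dependence of the reduced resolvent and of $f_n$ comes instead from Lemma~\ref{estim2 kappa+mu} (via the Riesz projectors for the finitely many low modes, with $U^\sigma$ shrunk accordingly), which you in fact already cite in the opening sentence. With that attribution corrected, the argument is complete and agrees with the paper's.
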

\begin{proof}
The statements of items (i) and (ii) for $s=0$ are proved in \cite[Lemma 5.4]{GK} and  \cite[Lemma 5.7]{GK}, respectively. 
In view of Lemma \ref{estim1 kappa+mu}, Lemma \ref{estim2 kappa+mu}, and Lemma \ref{extension of f_n},
the arguments of the proof of \cite[Lemma 5.4, Lemma 5.7]{GK} can be used to show that the results in the case $-1/2< s < 0$.
\end{proof}

\begin{Coro}\label{gradients of Birkhoff coordinates}
For any $u \in H^s_{r,0}$, $-1/2 < s \le 0$, and any $n \ge 1$,  the gradient of the $n$th  component of $\Phi$, 
$\Phi_n(u) =  \frac{ \1 1 | f_n(u) \2}{\sqrt{\kappa_n(u)}}$ (cf. \eqref{def Birkhoff real}), is  in $H^{s+1}_{c, 0}$ and 
$$
\nabla_0 \Phi_n : H^s_{r,0} \to H^{s+1}_{c, 0} 
$$
is real analytic.
\end{Coro}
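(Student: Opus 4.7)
The plan is to apply the Leibniz (quotient) rule to the scalar-valued product/quotient defining $\Phi_n$ and then invoke Lemma \ref{nabla of f_n and kappa_n} together with the analyticity results already established for the scalar quantities $\1 1|f_n(u)\2$ and $\kappa_n(u)$.

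First, I would record that on the real line the denominator $\sqrt{\kappa_n(u)}$ is analytic and bounded away from zero. Indeed, for each fixed $w \in H^s_{r,0}$ with $s \le 0$ one has $\sigma(s)=s$, and Lemma \ref{estim1 kappa+mu} (for $n > n_2$) together with Lemma \ref{estim2 kappa+mu} (for $1 \le n \le n_2$) show that after possibly shrinking the neighborhood $U^s \equiv U^s_w$ of $w$ in $H^s_{c,0}$, the map $u \mapsto 1/\sqrt[+]{\kappa_n(u)}$ is well-defined and complex analytic on $U^s$. Similarly, by Lemma \ref{extension of f_n}, the scalar map $u \mapsto \1 1|f_n(u)\2$ is analytic on $U^s$. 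Since real analyticity is a local property, these yield real analytic scalar functions on all of $H^s_{r,0}$.

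Next, I would apply the Leibniz rule to
$$
\Phi_n(u) = \frac{\1 1 | f_n(u)\2}{\sqrt[+]{\kappa_n(u)}},
$$
which gives
$$
\nabla_0 \Phi_n(u) \;=\; \frac{1}{\sqrt[+]{\kappa_n(u)}} \, \nabla_0 \1 1 | f_n \2 (u) \;-\; \frac{\1 1 | f_n(u)\2}{2\,\kappa_n(u)^{3/2}} \, \nabla_0 \kappa_n(u).
$$
Now I would invoke Lemma \ref{nabla of f_n and kappa_n}: item (i) states that $\nabla_0 \1 1|f_n\2 : H^s_{r,0}\to H^{s+1}_{c,0}$ is real analytic, and item (ii) states that $\nabla_0 \kappa_n : H^s_{r,0} \to H^{s+1}_{r,0}$ is real analytic. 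Multiplying an $H^{s+1}_{c,0}$-valued real analytic map by the real analytic scalar functions $1/\sqrt[+]{\kappa_n(u)}$ and $-\1 1|f_n(u)\2/(2\kappa_n(u)^{3/2})$ produces $H^{s+1}_{c,0}$-valued real analytic maps, and the sum of two such maps is again real analytic with values in $H^{s+1}_{c,0}$. This establishes both claims of the corollary.

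There is really no hard step here: the work has already been done in Lemma \ref{nabla of f_n and kappa_n}, and what remains is the formal chain/quotient-rule manipulation. The only minor point to verify carefully is the strict positivity of $\kappa_n(u)$ on $H^s_{r,0}$ so that the square root defines an analytic branch; this however is an immediate consequence of the estimates $|n\kappa_n(u)-1| < 1$ (for $n>n_2$) and $|\kappa_n(u)-\kappa_n(w)| < \kappa_n(w)/2$ (for $1 \le n \le n_2$) recorded in Lemmas \ref{estim1 kappa+mu} and \ref{estim2 kappa+mu}, combined with a covering argument to patch local analyticity into global real analyticity on $H^s_{r,0}$.
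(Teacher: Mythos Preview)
Your proof is correct and follows essentially the same approach as the paper: apply the chain/quotient rule to obtain $\nabla_0 \Phi_n(u) = \kappa_n(u)^{-1/2}\,\nabla_0\langle 1|f_n\rangle(u) - \tfrac12\,\langle 1|f_n(u)\rangle\,\kappa_n(u)^{-3/2}\,\nabla_0\kappa_n(u)$, then invoke Lemma~\ref{nabla of f_n and kappa_n} for the gradients and Lemmas~\ref{estim1 kappa+mu}, \ref{estim2 kappa+mu}, \ref{extension of f_n} for the analyticity of the scalar prefactors. The paper's proof is terser but structurally identical.
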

\begin{proof}
The stated result for $s=0$ is proved in \cite[Proposition 5.9]{GK}. For $-1/2 < s < 0$ one argues as follows: 
by the chain rule, one has for any $n \ge 1$,
$$
\nabla_0 \Phi_n(u) =  \frac{ 1 }{\sqrt{\kappa_n(u)}} \nabla_0 \1 1 | f_n \2 (u)
- \frac12 \frac{ \1 1 | f_n(u) \2 }{\kappa_n(u)^{3/2}} \nabla_0 \kappa_n(u)
$$
and the results follow from  Lemma \ref{nabla of f_n and kappa_n} together 
with Lemma \ref{estim1 kappa+mu}, Lemma \ref{estim2 kappa+mu}, and Lemma \ref{extension of f_n}.
\end{proof}
For any $-1/2 < s \le 0$, the Gardner bracket of functionals $F$, $G : H^{-s}_{r,0} \to \C$ with sufficiently regular $L^2$-gradients
is defined as (cf. Remark \ref{convention nabla})
$$
\{ F, G \} = \frac{1}{2\pi} \int_0^{2\pi} \partial_x \nabla F \cdot \nabla G \, d x \, .
$$
\begin{Prop}\label{canonical relations}
For any $-1/2 < s \le 0$, 
the Poisson brackets between the components of $\Phi : H^s_{r,0} \to \h^{s+1/2}_{r,0}$ are well-defined and for any $n, k \ge 1$
$$
\{ \Phi_n, \Phi_k \} = 0\, , \qquad \{ \Phi_n,  \overline{\Phi_{k}} \} = -i \delta_{nk}.
$$
\end{Prop}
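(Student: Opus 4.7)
The plan is to combine the regularity of the gradients of the Birkhoff coordinates, supplied by Corollary \ref{gradients of Birkhoff coordinates}, with a density and analyticity argument, bootstrapping from the canonical relations in the $L^2$ setting already established in \cite{GK}.

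First I would verify that the Gardner bracket is well defined on $H^s_{r,0}$ for any $-1/2 < s \le 0$. By Corollary \ref{gradients of Birkhoff coordinates}, for any $u \in H^s_{r,0}$ and any $n,k \ge 1$, the gradients $\nabla_0 \Phi_n(u)$ and $\nabla_0 \overline{\Phi_k}(u) = \overline{\nabla_0 \Phi_k(u)}$ lie in $H^{s+1}_{c,0}$, so $\partial_x \nabla_0 \Phi_n(u) \in H^s_{c,0}$. Since $s > -1/2$ implies $s+1 \ge -s$, one has a continuous inclusion $H^{s+1}_{c,0} \hookrightarrow H^{-s}_{c,0}$, and the Gardner bracket
\[
\{\Phi_n, \Phi_k\}(u) = \frac{1}{2\pi}\,\langle \partial_x \nabla_0 \Phi_n(u) ,\, \nabla_0 \Phi_k(u) \rangle
\]
is well defined via the bilinear duality pairing between $H^s_{c,0}$ and $H^{-s}_{c,0}$, and similarly for $\{\Phi_n, \overline{\Phi_k}\}$.

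Next I would show that for fixed $n,k \ge 1$, both $u \mapsto \{\Phi_n, \Phi_k\}(u)$ and $u \mapsto \{\Phi_n, \overline{\Phi_k}\}(u)$ are real analytic on $H^s_{r,0}$. By Corollary \ref{gradients of Birkhoff coordinates}, $\nabla_0 \Phi_n$ and $\nabla_0 \overline{\Phi_k}$ are real analytic maps $H^s_{r,0} \to H^{s+1}_{c,0}$. Composing with the bounded linear operator $\partial_x : H^{s+1}_{c,0} \to H^s_{c,0}$ and with the continuous bilinear pairing $H^s_{c,0} \times H^{-s}_{c,0} \to \C$ yields real analytic scalar functions on $H^s_{r,0}$.

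Finally I would reduce the identification of the brackets to the case $s = 0$. For $u \in L^2_{r,0} \subset H^s_{r,0}$, the $H^s$-gradient and the $L^2$-gradient of $\Phi_n$ at $u$ coincide as elements of $H^1_{c,0} \subset H^{s+1}_{c,0}$ by uniqueness in the respective duality pairings, so the Gardner bracket computed via the $H^s$ duality pairing matches the one used in \cite{GK}. The canonical relations $\{\Phi_n, \Phi_k\} = 0$ and $\{\Phi_n, \overline{\Phi_k}\} = -i\delta_{nk}$ are known on $L^2_{r,0}$ from \cite{GK}, and since $L^2_{r,0}$ is dense in $H^s_{r,0}$ (for $-1/2 < s \le 0$) while both sides of the identities are continuous on $H^s_{r,0}$ by the preceding step, the relations extend to all of $H^s_{r,0}$. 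The main technical point, though not a deep one, is the careful matching of the two notions of gradient on the dense subspace $L^2_{r,0}$, which is what allows the density argument to transfer the canonical relations from the $L^2$ setting to the $H^s$ setting.
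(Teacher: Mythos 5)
Your proposal is correct and follows essentially the same route as the paper: Corollary \ref{gradients of Birkhoff coordinates} gives the $H^{s+1}_{c,0}$-regularity of the gradients, which makes the Gardner bracket well-defined and real analytic on $H^s_{r,0}$, and then density of $L^2_{r,0}$ in $H^s_{r,0}$ together with continuity of the brackets transfers the relations from \cite[Corollary 7.3]{GK} at $s=0$ to all $-1/2 < s < 0$. The extra care you take in matching the two notions of gradient on the dense subspace is a sound observation but essentially automatic here, since the paper uses a single pairing-based definition of $\nabla_0$ throughout.
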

\begin{proof}
For $s=0$, the stated results are the content of \cite[Corollary 7.3 ]{GK} and it remains to consider $-1/2 < s < 0$. Note that 
by Corollary \ref{gradients of Birkhoff coordinates}, the Poisson brackets $\{ \Phi_n, \Phi_k \}$ and $ \{ \Phi_n,  \overline{\Phi_k} \}$
are well-defined and real analytic for any $n, k \ge 1$. Since $L^2_{r,0}$ is dense in $H^s_{r,0}$, the claimed canonical relations then follow in the case $-1/2 < s < 0$ 
from the case $s=0$ by continuity.
 \end{proof}


\section{Analytic extension of the Birkhoff map}\label{sec:the_Birkhof_map}

The goal of this section is to show that for any $w \in H^s_{r,0} \setminus \{ 0 \}$ with $s > -1/2$, there exists a neighborhood $U^s_w$ of $w$ in $H^s_{c,0}$
so that $\Phi : U^s_w \cap H^s_{r,0} \to \h^{s+1/2}_{r,0}$ extends to an analytic map on $U^s_w$.

As in Section \ref{sec:Psi}, we assume that $M >0$, $s >-1/2$, and $w$ is a potential in $H^s_{r,0}$, satisfying $0 < \|w\|_s \le M$.
The finite gap potential $w_N$ is defined as in \eqref{def w_N} with $N$ chosen so that 
$\| w - w_N\|_s < \frac{1}{2C_{M,s} }$ (cf. \eqref{choice N}) where $C_{M,s}$ is given by \eqref{def C_{M,s}}.
Furthermore,  we define $\sigma\equiv \sigma(s) := \min (s, 0)$
and let $n_0 > N$ be given as in Lemma \ref{Lemma 16 GKT2}.
We recall that for any $s > 0$, $C_{M,s} \ge C_{M, \sigma}$.
Denote by $U^\sigma \equiv U^\sigma_w$ an open ball in $H^\sigma_{c,0}$, centered at $w$, satisfying {\em (NBH1)} and {\em (NBH2)} of Section \ref{sec.normalized eigenfunctions}.
For $s > 0$, let $U^s \equiv U^s_w$ be the neighborhood of $w$  in $H^s_{c,0}$, given by
\begin{equation}\label{def U^s}
U^s :=  U^\sigma \cap B^s_{c,0}(w_N, 1/C_{M,s}) .
\end{equation}
Note that $U^s$ is ball, centered at $w$, which invariant under complex conjugation, i.e., for any $u \in U^s$, one has $\overline u \in U^s$.

By \eqref{def Birkhoff real}, for any $u \in H^s_{r,0}$,  the nth component $\Phi_n(u)$, $n \ge 1$, of the Birkhoff map $\Phi$ 
is given 
$$
\Phi_n(u)=\frac{\big\1 1|f_n( u)\big\2}{\sqrt[+]{\kappa_n(u)}} = \frac{\big\1 \overline{ f_n( u)} | 1 \big\2}{\sqrt[+]{\kappa_n(u)}}.
$$
Since for any $u \in H^s_{r,0}$ and $n \ge 0$, the eigenvalue $\lambda_n(u)$  and the scaling factor $\kappa_n(u)$ 
are real valued, it then follows that for any $n \ge 1$,
\begin{equation}\label{eq:Phi_n}
\Phi_{-n}(u) =  \frac{\big\1  f_n(  u) | 1 \big\2}{\sqrt[+]{\kappa_n( u)}} , \qquad
\Phi_n(u)  = \overline{\Phi_{-n}(\overline u)} .
\end{equation}
Lemma \ref{estim1 kappa+mu}, Lemma \ref{estim2 kappa+mu}, and Proposition \ref{extension of f_n}
then yield the following
\begin{Coro}\label{local extension1 Phi}
For any $w \in H^s_{r,0} \setminus \{ 0 \}$, $s > -1/2$, and $n \ge 1$,  the components $\Phi_n$ and $\Phi_{-n}$
extend to analytic maps  $U^\sigma \to \C$, given by
$$
\Phi_{-n}(u) =  \sqrt[+]{n} \frac{\big\1  f_n(  u) | 1 \big\2}{\sqrt[+]{n\kappa_n( u)}}  , \qquad
\Phi_n(u) = \overline{\Phi_{-n}(\overline u)} .
$$
\end{Coro}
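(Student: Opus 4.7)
The plan is to assemble the three pieces already in place. First, Lemma \ref{extension of f_n} yields that for every $n \ge 1$, the map $f_n : U^\sigma \to H^{\sigma+1}_+$ is analytic; composing with the continuous antilinear functional $\1 \, \cdot \, | 1 \2$ (which is bounded on $H^{\sigma+1}_+$) gives that $u \mapsto \1 f_n(u) | 1 \2$ is analytic $U^\sigma \to \C$.

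Next I would analyze the factor $1/\sqrt[+]{n \kappa_n(u)}$ in two regimes. For $n > n_2$, Lemma \ref{estim1 kappa+mu} provides the analyticity of $\kappa_n$ on $U^\sigma$ together with the estimate $|n\kappa_n(u) - 1| \le \tfrac{7}{12} e^{1/3} < 1$. This forces $n \kappa_n(u)$ to lie in the open disk of radius less than one around $1$, hence in the open right half plane, so the principal branch $\sqrt[+]{n\kappa_n(u)}$ is well-defined, analytic, and nonvanishing on $U^\sigma$, and so is its reciprocal. For $1 \le n \le n_2$, Lemma \ref{estim2 kappa+mu} already asserts that $u \mapsto 1/\sqrt[+]{\kappa_n(u)}$ is analytic on $U^\sigma$, and since $\sqrt[+]{n \kappa_n(u)} = \sqrt{n}\, \sqrt[+]{\kappa_n(u)}$ (both factors have positive real part by the estimate $|\kappa_n(u) - \kappa_n(w)| < \kappa_n(w)/2$ and $\kappa_n(w) > 0$), the same conclusion holds.

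Combining these two observations, the formula
\[
\Phi_{-n}(u) := \sqrt[+]{n}\, \frac{\1 f_n(u) | 1 \2}{\sqrt[+]{n \kappa_n(u)}}
\]
defines an analytic map $U^\sigma \to \C$, and on the real slice $U^\sigma \cap H^s_{r,0}$ it agrees with \eqref{eq:Phi_n}, since there $n \kappa_n(u) > 0$ and $\sqrt[+]{n}/\sqrt[+]{n \kappa_n(u)} = 1/\sqrt[+]{\kappa_n(u)}$. Consequently it is the unique analytic extension to $U^\sigma$ of the component $\Phi_{-n}$ of the Birkhoff map.

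For $\Phi_n$, define $\Phi_n(u) := \overline{\Phi_{-n}(\overline u)}$. Because $U^\sigma$ is a ball in $H^\sigma_{c,0}$ centered at the real potential $w$, it is invariant under the antiholomorphic involution $u \mapsto \overline u$; pre-composing the analytic map $\Phi_{-n}$ with this involution yields an antiholomorphic map, and post-composing with complex conjugation on $\C$ restores analyticity. On $U^\sigma \cap H^s_{r,0}$ we have $\overline u = u$ and the identity $\Phi_n = \overline{\Phi_{-n}}$ from \eqref{eq:Phi_n}, so this is the correct extension. There is no serious obstacle; the only point requiring care is keeping the two square roots inside the domain of the principal branch, which is exactly what the quantitative bounds in Lemmas \ref{estim1 kappa+mu} and \ref{estim2 kappa+mu} are designed to ensure.
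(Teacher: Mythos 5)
Your proof is correct and assembles exactly the three ingredients (Lemma \ref{estim1 kappa+mu}, Lemma \ref{estim2 kappa+mu}, and Lemma \ref{extension of f_n}) that the paper cites without further comment; the rewriting of $1/\sqrt[+]{\kappa_n}$ as $\sqrt[+]{n}/\sqrt[+]{n\kappa_n}$ so that the estimate $|n\kappa_n(u)-1|<1$ can be used for $n>n_2$, and the conjugation argument for $\Phi_n$, are precisely what the authors intend. One small correction: the functional $f\mapsto \1 f\,|\,1\2=\widehat f(0)$ is \emph{linear}, not antilinear, in $f$ (it occupies the first slot of the sesquilinear pairing), which is in fact what makes the composition $u\mapsto\1 f_n(u)\,|\,1\2$ analytic rather than antianalytic.
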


Next we want to prove that for any $u \in U^s$, $(\Phi_n(u))_{n \ge 1}$ and $(\Phi_{-n}(u))_{n \ge 1}$
are in $\h^{s + 1/2}_+$ and that $(\Phi_n(u))_{n \ge 1}$, $(\Phi_{-n}(u))_{n \ge 1} : U^s \to \h^{s + 1/2}_+$
is analytic. By Lemma \ref{estim1 kappa+mu}, Lemma \ref{estim2 kappa+mu}, and Corollary \ref{local extension1 Phi},
this will hold if
\begin{equation}\label{bound1 on Phi}
\sum_{n > n_2} n^{2 + 2s} | \1 f_n(u) | 1\2 |^2 <\infty 
\end{equation}
locally uniformly for $u \in U^s$, where $n_2$ is given by Lemma \ref{estim1 kappa+mu}.
To prove \eqref{bound1 on Phi}, we use Proposition \ref{prop:Psi}, 
implying that $(\1 h_n(u) | 1 \2)_{n >n_2}$ is in $\h^{s+1}_{> n_2}$ locally uniformly for $u$ in $U^s$.
Note that $h_n(u)$ is an eigenfunction of $L_u$, corresponding to the eigenvalue $\lambda_n(u)$.
 Recall that by \eqref{extension f_n 1}
$$
f_{n}(u) =  \frac{1}{ \sqrt[+]{\mu_{n}(u)}}  P_{n }(u)\big(Sf_{n -1}(u) \big), \qquad \forall n > n_2, \ \forall \, u \in U^\sigma .
$$
and by Proposition \ref{prop:Psi}(ii),
$$
\big|\1 h_n( u) | f_n(w_N)  \2  - 1 \big| \le 1/2, \qquad \forall \, n \ge n_0,  \ \forall  \, u \in U^\sigma .
$$
Since $\lambda_n(u)$, $n \ge 0$, are simple eigenvalue of $L_u$, it follows that for any $n \ge n_2$
\begin{equation}\label{formula with f_n, a_n}
f_n(u) = a_n(u)h_n(u) , \qquad a_n(u) := \frac{\1 f_n( u) | f_n(w_N)  \2}{\1 h_n( u) | f_n(w_N)  \2} , 
\end{equation}
and for any $n > n_2$,
$P_{n }(u)\big(Sh_{n-1 }(u) \big) = \nu_{n}(u) h_{n}(u)$ where
$$
 \nu_{n}(u):= \frac{ \1 P_{n  }(u)\big(Sh_{n-1 }(u) \big) | f_n(w_N)  \2}{\1 h_n( u) | f_n(w_N)  \2} .
$$
Note that by Corollary \ref{def P_n}  and Proposition  \ref{prop:Psi}, $\nu_n : U^\sigma \to \C$ is analytic for any $n > n_2$.
The factors $ a_{n}(u)$ can be computed in terms of the $\nu_n(u)$ inductively. For notational convenience,
in the formulas below,
we will not indicate the dependence on $u$.  For any $n > n_2$ one obtains
$$
\sqrt[+]{\mu_{n}} a_{n} h_{n} = \sqrt[+]{\mu_{n}} f_{n} = P_{n }\big(Sf_{n-1 } \big)
= P_{n }\big(S a_{n-1} h_{n-1 } \big) = a_{n-1} \nu_{n} h_{n} .
$$
Since $h_{n} \ne 0$ (cf. Proposition  \ref{prop:Psi}) and $\mu_n \ne 0$ (cf. Lemma \ref{estim1 kappa+mu}),
one infers that
$$
a_n = \frac{\nu_n}{\sqrt[+]{\mu_n}} a_{n-1} .
$$
By induction it then follows that for any $n_3 \ge n_2$ and $n > n_3$
\begin{equation}\label{formula a_n}
a_n = a_{n_3} \prod_{k = n_3+1}^{n} \frac{\nu_k}{\sqrt[+]{\mu_k}} 
\end{equation}
and in turn, for any $n > n_2$ one has by \eqref{formula with f_n, a_n},
\begin{equation}\label{formula <f_n | 1>}
\1 f_n(u) | 1\2 = a_n(u) \1 h_n(u) | 1\2 , \qquad 
a_n(u) = a_{n_2}(u) \prod_{k = n_2+1}^{n} \frac{\nu_k(u)}{\sqrt[+]{\mu_k(u)}} .
\end{equation}
Note that for any $n \ge n_2$, $a_n : U^\sigma \to \C$ is analytic by \eqref{formula with f_n, a_n}
Corollary \ref{def P_n}, Lemma \ref{extension of f_n}, and Proposition  \ref{prop:Psi}(ii).
In view of the first identity in \eqref{formula <f_n | 1>} and Proposition  \ref{prop:Psi}(i),
the estimate \eqref{bound1 on Phi} will follow if we show 
that there exists $n_3 \ge n_2$ so that
$$
U^\sigma \to \ell^\infty_{> n_3} , \ u \mapsto (a_n(u))_{n > n_3}
$$
is locally uniformly bounded for $u$ in $U^\sigma$. We begin by estimating $\nu_n(u)$, $n > n_2$.
For notatational convenience, write $f_n \equiv f_n(w_N)$. Then 
\begin{equation}\label{rewrite nu}
\nu_n(u) = \frac{ \1 P_{n  }(u)(Sh_{n-1 }(u) ) \, | f_n  \2}{\1 h_n( u) | f_n  \2}
= 1 + \frac{\delta_n(u)}{\alpha_n(u)}
\end{equation}
where 
$$\alpha_n(u):= \1 h_n( u) | f_n  \2
$$ 
and
\begin{align}\label{64bis}
\delta_n(u) & := \1 P_{n  }(u)(Sh_{n-1 }(u))  - h_n(u) \, | f_n  \2  \nonumber\\
& =  \1 P_{n  }(u)\big(SP_{n-1}(u) f_{n-1 } - f_n\big) \, | f_n  \2 ,
\end{align}
where we used that $h_n(u) = P_n(u) f_n$.
By Corollary \ref{def P_n}  and Proposition  \ref{prop:Psi}, $\delta_n: U^\sigma \to \C$
is analytic for any $n > n_2$. 
\begin{Prop}\label{prop:delta_n-analyticity}
For any $u \in U^\sigma$, $(\delta_n(u))_{n > n_2} \in \ell^1_{> n_2}$ and
\begin{equation}\label{eq:delta}
\delta : U^\sigma \to \ell^1_{> n_2},\quad u\mapsto (\delta_n(u))_{n > n_2},
\end{equation}
 is analytic and bounded.
\end{Prop}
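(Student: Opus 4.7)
I would begin by rewriting $\delta_n(u)$ in a form that exposes its vanishing at first order in $v := u - w_N$. For any $n > N$, the finite-gap structure of $w_N$ gives $S f_{n-1}(w_N) = f_n(w_N)$ (since $f_k(w_N) = g_\infty e^{ikx}$ for $k \ge N$), and since $\lambda_n(w_N) = n$ is a simple eigenvalue for $n \ge N$, one has $P_k(w_N) f_k(w_N) = f_k(w_N)$. Substituting $f_n = S f_{n-1}$ and $f_{n-1} = P_{n-1}(w_N) f_{n-1}$ into \eqref{64bis}, and writing $\Delta_k(u) := P_k(u) - P_k(w_N)$, yields
$$\delta_n(u) = \big\langle P_n(u)\, S\, \Delta_{n-1}(u)\, f_{n-1} \,\big|\, f_n\big\rangle.$$
Further splitting $P_n(u) = P_n(w_N) + \Delta_n(u)$ and using that, in the notation of Section~\ref{Lax operator}, $P_n(w_N) = |f_n\rangle\langle f_n|$ together with $S^* f_n = f_{n-1}$ (valid for $n > N$), one decomposes $\delta_n = \delta^{I}_n + \delta^{II}_n$ with
$$\delta^{I}_n(u) = \big\langle \Delta_{n-1}(u)\, f_{n-1} \,\big|\, f_{n-1}\big\rangle = \alpha_{n-1}(u) - 1, \qquad \delta^{II}_n(u) = \big\langle \Delta_n(u)\, S\, \Delta_{n-1}(u)\, f_{n-1} \,\big|\, f_n\big\rangle.$$

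Next, I would invoke the key \emph{vanishing identity} behind the name of the lemma: $\alpha_k(u) - 1 = O(\|v\|_s^2)$. Indeed, the first-order Neumann contribution to $\Delta_k(u)$ is $-\tfrac{1}{2\pi i}\oint_{\partial D_k} (L_{w_N}-\lambda)^{-1} T_v (L_{w_N}-\lambda)^{-1}\, d\lambda$; evaluating its matrix element at $f_k$ via the Laurent expansion $(L_{w_N}-\lambda)^{-1} = P_k(w_N)/(k-\lambda) + R_k(\lambda)$ with $R_k(\lambda)$ holomorphic at $\lambda = k$, and using $R_k(k) f_k = 0$, reduces the contour integral to $\tfrac{1}{2\pi i}\oint \langle T_v f_k | f_k\rangle / (k-\lambda)^2\, d\lambda$, which vanishes. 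Hence both $\delta^{I}_n$ and $\delta^{II}_n$ are at least quadratic in $v$, a gain of an extra factor of $\|v\|_s$ compared with the estimates in Section~\ref{sec:Psi}. This extra factor is exactly what upgrades $\ell^2$-summability into $\ell^1$-summability.

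The third step is to expand $\Delta_{n-1}(u)$ in $\delta^{I}_n$ (resp.\ both $\Delta_n(u)$ and $\Delta_{n-1}(u)$ in $\delta^{II}_n$) as Neumann series via \eqref{eq:neumann_series}, starting at order $m \ge 2$ for $\delta^{I}_n$ and at orders $m,m' \ge 1$ for $\delta^{II}_n$. Each such term, after contour integration, is a multi-sum over intermediate indices $k_1, \ldots$ of products of Fourier coefficients $\widehat{v g_{k_j}\overline{g_{k_{j+1}}}}$ divided by products of denominators bounded below by $(|k_j - n| + 1)/5$, structurally identical to the quantities $S_u(n;k_1,\ldots,k_m,\alpha)$ of Lemma~\ref{key lemma}. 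I would then apply Cauchy--Schwarz in the summation $\sum_{n > n_2}|\cdot|$ to factor each contribution as a product of two $\ell^2$-type sequences in $n$ (one coming from the ``projector at $n$'' resolvent chain, the other from the ``projector at $n-1$'' chain), and estimate each factor by iterating the operator $Q$ of Lemma~\ref{estimate Q}, whose norm is controlled by $2 C_{s,1}^3 C_{s,2}^2 (2 + M)^{2\eta(s)}\|v\|_s$. Summing over all Neumann orders produces a convergent double geometric series under the standing assumption $\|v\|_s \le 1/C_{M,s}$ from \eqref{def U^s}, yielding both $\ell^1$-summability and uniform boundedness on $U^\sigma$.

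Analyticity of $\delta : U^\sigma \to \ell^1_{>n_2}$ then follows at once: each component $\delta_n$ is analytic on $U^\sigma$ by Corollary~\ref{def P_n} and Proposition~\ref{prop:Psi}(ii), and the estimates above show that the partial sums in the Neumann expansion converge to $\delta(u)$ in the $\ell^1$-norm uniformly on $U^\sigma$. The main technical obstacle, I expect, is combinatorial bookkeeping: the $k_j < N$ vs $k_j \ge N$ dichotomy of Section~\ref{sec:Psi} (with the $\tau$-trick from Lemma~\ref{multi of functions}(i) compensating the loss of a $1/(|k_j - n|+1)$ denominator whenever an intermediate index drops below $N$) must be carried out \emph{twice} — once in each of the two Neumann expansions contributing to $\delta^{II}_n$ — and the constants must be tracked carefully so that the final double series is dominated by a geometric series strictly less than $1/2$, as in the closing estimate of Lemma~\ref{key lemma}.
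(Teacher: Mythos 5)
Your decomposition $\delta_n = \delta^{I}_n + \delta^{II}_n$ with $\delta^{I}_n = \alpha_{n-1}(u)-1$ and $\delta^{II}_n = \langle \Delta_n(u) S\Delta_{n-1}(u) f_{n-1} \,|\, f_n\rangle$ is correct, as is the observation that the contribution linear in $v$ to each piece vanishes by a residue computation. But the inference that ``quadratic in $v$ upgrades $\ell^2$-summability into $\ell^1$-summability'' is wrong: the gain of a factor $\|v\|_s$ is a gain in the size of $v$, not a gain in decay in $n$, and the two are unrelated. Concretely, the quadratic part of $\delta^{I}_n = \alpha_{n-1}(u)-1$ equals
$-\sum_{k\ne n-1}\frac{\langle vf_{n-1}|f_k\rangle\langle vf_k|f_{n-1}\rangle}{(\lambda_k-(n-1))^2}$,
and restricting to $k\ge N$ (so $\lambda_k=k$, $\langle vf_{n-1}|f_k\rangle=\widehat v(k-n+1)$) and setting $\ell=k-n+1$, this tends as $n\to\infty$ to the generically nonzero constant $-\sum_{\ell\ne 0}\widehat v(\ell)\widehat v(-\ell)/\ell^2$; hence $(\delta^{I}_n)_n\notin\ell^1$ on its own, and likewise for $(\delta^{II}_n)_n$.

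What saves the proposition is a cancellation \emph{between} the two pieces, not their common vanishing at first order. At second order the limiting constants are $\mp\sum_\ell\widehat v(\ell)\widehat v(-\ell)/\ell^2$ and cancel exactly; at every higher order, the corresponding ``bulk'' terms in the two Neumann expansions---those in which every intermediate summation index $k_j$ lies above $N$, so that $\lambda_{k_j}=k_j$ and no extra decay in $n$ is available---also cancel exactly. This is precisely what the Vanishing Lemma of \cite[Lemma~5.1]{GKT3} encodes, via the algebraic identity $\mathcal D(\ell[1,d])=0$ for the contour-integral coefficients $A(\cdots)$. The paper's proof organizes $\delta_n$ so that the cancelled bulk is exposed as $\delta_n^{(0)}+\delta_n^{(3)}$, leaving only remainder terms in which at least one intermediate index drops below $N$ (or the boundary term $\mathcal R_n^{(3)}$); each such remainder carries an extra factor bounded by $C_1 n^{2s}$ from the resolvent denominator $|\lambda_\alpha - n - \mu|^{-1}$ with $\alpha\le N$, and only then do Cauchy--Schwarz in $n$ and the $Q$-iteration of Lemma~\ref{estimate Q} yield $\ell^1$-summability. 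Your proposal does not exhibit this cancellation and therefore would not close; any correct route has to identify and use it, either via the combinatorial identity as the paper does or by an equivalent explicit bookkeeping of the matching terms in your two Neumann expansions.
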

The proof of Proposition \ref{prop:delta_n-analyticity}, which is quite delicate and a bit technical,
is differred to  Section \ref{sec:the_delta_map}. We remark that a local version of Proposition \ref{prop:delta_n-analyticity} 
near $w=0$ is given in \cite[ Proposition 4.1]{GKT3}.

We now use  Proposition \ref{prop:delta_n-analyticity} to prove the following
\begin{Lem}\label{lem:a}
For any $u \in U^\sigma$, the sequence $(a_n(u))_{n > n_2} \in \ell^\infty_{> n_2}$ is bounded and the map
\begin{equation}\label{eq:a}
a : U^\sigma \to \ell^\infty_{n > n_2}, \  u\mapsto  (a_n(u))_{n > n_2}
\end{equation}
 is analytic.
\end{Lem}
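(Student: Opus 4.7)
The plan is to exploit the multiplicative formula \eqref{formula a_n},
\[
a_n(u) = a_{n_2}(u) \prod_{k=n_2+1}^{n} \frac{\nu_k(u)}{\sqrt[+]{\mu_k(u)}},
\]
and to show that each factor differs from $1$ by a quantity summable in $k$, locally uniformly on $U^\sigma$. Granted this, the elementary product bound $\bigl|\prod_k (1 + x_k)\bigr| \le \exp\bigl(\sum_k |x_k|\bigr)$ immediately yields $\sup_{n > n_2}|a_n(u)| < \infty$ locally uniformly on $U^\sigma$, giving the boundedness claim. Since each $a_n : U^\sigma \to \C$ is already known to be analytic (by \eqref{formula a_n} together with Corollary \ref{def P_n}, Lemma \ref{estim1 kappa+mu}, Lemma \ref{estim2 kappa+mu}, and Proposition \ref{prop:Psi}), the analyticity of $a : U^\sigma \to \ell^\infty_{>n_2}$ then follows from the standard principle that a locally bounded map into a Banach sequence space with coordinate-wise analytic entries is Fr\'echet analytic, verified by applying Cauchy's estimates to each $a_n$.

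To execute the summability step, I would split
\[
\frac{\nu_k(u)}{\sqrt[+]{\mu_k(u)}} - 1 \; = \; \frac{\nu_k(u)-1}{\sqrt[+]{\mu_k(u)}} \; + \; \Bigl( \frac{1}{\sqrt[+]{\mu_k(u)}} - 1 \Bigr),
\]
and bound the two summands separately. From \eqref{rewrite nu} one has $\nu_k - 1 = \delta_k/\alpha_k$; Proposition \ref{prop:Psi}(ii) provides $|\alpha_k(u)| \ge 1/2$, and Proposition \ref{prop:delta_n-analyticity} provides $(\delta_k(u))_{k > n_2} \in \ell^1$ locally uniformly on $U^\sigma$, whence $\sum_k |\nu_k(u)-1| < \infty$ locally uniformly. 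For the other summand, Lemma \ref{estim1 kappa+mu} gives $|\mu_k - 1| \le \tfrac{7}{12} e^{1/3} < 1$, so $\sqrt[+]{\mu_k}$ remains in a compact subset of $\C \setminus \{0\}$ and a first-order Taylor estimate yields $|1/\sqrt[+]{\mu_k} - 1| \le C\,|\mu_k - 1| \le C'|\gamma_k|$. The analyticity and local boundedness of the Moment Map $\Gamma$ on $U^\sigma$, encoded in property \emph{(NBH1)} via \cite[Theorem 5]{GKT2}, then provide $\sum_k |\gamma_k(u)| < \infty$ locally uniformly.

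The only genuinely nontrivial input is Proposition \ref{prop:delta_n-analyticity}, whose proof is postponed to Section \ref{sec:the_delta_map}; everything else is a direct repackaging of the spectral bounds already at hand. The main obstacle, once the $\ell^1$-summability of $\delta_k$ and $\gamma_k$ is granted, is the passage from coordinate-wise analyticity plus local $\ell^\infty$-boundedness to genuine Fr\'echet analyticity of $a$ as an $\ell^\infty$-valued map. This I would handle by fixing a reference point $u_0 \in U^\sigma$, applying the scalar Cauchy integral formula to each $a_n$ on a small polydisc around $u_0$ to bound the Taylor coefficients of $a_n$ uniformly in $n$ by the locally uniform $\ell^\infty$-bound on $(a_n)$, and then assembling the resulting power series term by term into an $\ell^\infty$-valued analytic expansion of $a$ at $u_0$.
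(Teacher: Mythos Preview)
Your proposal is correct and follows essentially the same approach as the paper's proof: both use the product formula \eqref{formula a_n}, control $\nu_k - 1$ via Proposition \ref{prop:delta_n-analyticity} and Proposition \ref{prop:Psi}(ii), control $1/\sqrt[+]{\mu_k} - 1$ via Lemma \ref{estim1 kappa+mu} together with {\em (NBH1)}/\cite[Theorem 5]{GKT2}, and then invoke the standard principle that coordinate-wise analyticity plus local boundedness yields analyticity into $\ell^\infty$. The only cosmetic differences are that the paper takes logarithms and bounds $\sum_k |\log \nu_k|$, $\sum_k |\log \mu_k|$ (after first passing to some $n_3 \ge n_2$ to ensure the logarithms are defined), whereas you use the direct inequality $|\prod_k(1+x_k)| \le \exp(\sum_k |x_k|)$, and that the paper cites \cite[Theorem A.3]{KP-book} for the analyticity principle while you sketch the Cauchy-estimate argument.
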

\begin{proof}
Without loss of generality we assume that $-1/2 < s \le 0$ and hence $s = \sigma$.
Since for any $n > n_2$, $a_n: U^s \to \C$ is analytic, the claim follows by \cite[Theorem A.3]{KP-book},
once we show that there exists $n_3 \ge n_2$ so that $(a_n(u))_{n > n_3}$ is a sequence in
$\ell^\infty_{> n_3}$, which is bounded uniformly for $u \in U^s$.
In view of the formula \eqref{formula a_n} for $a_n(u)$, it suffices to show that there exists $n_3 \ge n_2$
so that the two sequences
$\big( \prod_{k = n_3+1}^{n} \nu_k(u) \big)_{n > n_3}$ and
$\big( \prod_{k = n_3+1}^{n}\frac{1}{\sqrt[+]{\mu_k(u)}}  \big)_{n > n_3}$
in  \eqref{formula a_n} are sequences in $\ell^\infty_{n > n_3}$, which are bounded uniformly for $u \in U^s$.
By Proposition \ref{prop:delta_n-analyticity}, there exists $n_3 \ge n_2$ and $C_1 \ge 1$
so that for any $u \in U^s$,
\begin{equation}\label{estimate sum delta_k}
\sum_{k = n_3+1}^\infty |\delta_k(u)| \le C_1, \qquad |\delta_n(u)| \le \frac 14, \quad \forall \, n > n_3 .
\end{equation}
Since by Proposition \ref{prop:Psi}, 
\begin{equation}\label{estimate alpha_n}
\alpha_n(u)| \ge 1/2 , \qquad \forall \, n \ge n_2 ,
\end{equation} 
it then follows that
$$
\big|  \frac{\delta_n(u)}{\alpha_n(u)}  \big| \le \frac 12 , \qquad \forall \, n > n_3, \ \forall \, u \in U^s .
$$
We then conclude from \eqref{rewrite nu}, \eqref{estimate sum delta_k}, and \eqref{estimate alpha_n}
that there exists $C_2 \ge 1$ so that for any $u \in U^s$ and any $k > n_3$,
$\log \nu_k(u) $ is well-defined and for any $n > n_3$,
$$
\sum_{k = n_3+1}^n |\log \nu_k(u) | = \sum_{k = n_3+1}^n |\log( 1 +  \frac{\delta_k(u)}{\alpha_k(u)}) | \le C_2 ,
$$
where $\log$ denotes the standard branch of the (natural) logarithm on $\C \setminus (- \infty, 0]$.
As a consequence,  for any $u \in U^s$ and $n > n_3$
$$
\big| \prod_{k = n_3+1}^{n} \nu_k(u)  \big| 
\le \exp \big( \sum_{k = n_3+1}^n |\log \nu_k(u) |\big) \le e^{ C_2}.
$$
Let us now turn towards $\prod_{k = n_3+1}^{n}\frac{1}{\sqrt[+]{\mu_k(u)}} $.
By Lemma \ref{estim1 kappa+mu}, $\log \mu_n(u) =  \log(1 - (1- \mu_n(u)))$
is well-defined for any $n > n_3$ and any $u \in U^s$.
By the property {\em{(NHB1)}} of $U^s$ it follows that Theorem 5 of \cite{GKT2} holds.
Hence by the estimate for $|1- \mu_n(u)|$ of Lemma \ref{estim1 kappa+mu}, there exists $C_3 \ge 1$ so that 
$$
\sum_{k = n_3+1}^\infty  |\log \mu_k(u) | \le C_3, \qquad \forall \, u \in U^s .
$$
This implies that for any $u \in U^s$ and $n > n_3$
$$
\big|  \prod_{k = n_3+1}^{n}\frac{1}{\sqrt[+]{\mu_k(u)}} \big| 
\le \exp \big( \frac 12 \sum_{k = n_3+1}^n |\log \mu_k(u) |\big) \le e^{C_3 / 2} ,
$$
which completes the proof of the lemma.
\end{proof}

Combining Corollary \ref{local extension1 Phi}, formula \eqref{formula <f_n | 1>},
Proposition  \ref{prop:Psi}, and Lemma \ref{lem:a}, we arrive at
\begin{Prop}\label{Phi analytic}
Let $s  > -1/2 $, $M >0$, and $w \in H^s_{r,0}$ with $0 < \|w\|_s \le M$ and let
$U^s$ be the neighborhood of $w$ in $H^s_{c,0}$, introduced at the beginning of the section (cf. \eqref{def U^s}).
Then 
$$
\Phi : U^s \to  \h^{s+ \frac 12}_{c, 0}, \quad u \mapsto \big( (\Phi_{-n}(u))_{n \ge 1}, (\Phi_{n}(u))_{n \ge 1} \big) ,
$$
is analytic.
\end{Prop}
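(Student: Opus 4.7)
The plan is to assemble Proposition \ref{Phi analytic} from the ingredients already in hand, by splitting $\Phi$ into a finite head and an infinite tail. Fix $n_3 \ge n_2$ as in the proof of Lemma \ref{lem:a}. The head $(\Phi_{\pm n})_{1 \le n \le n_3}$ takes values in $\C^{2n_3}$ and is analytic on $U^\sigma$ (hence on $U^s$) by Corollary \ref{local extension1 Phi}. Everything then reduces to showing that the two tails $u \mapsto (\Phi_{-n}(u))_{n > n_3}$ and $u \mapsto (\Phi_n(u))_{n > n_3}$ define analytic maps $U^s \to \h^{s+1/2}_{\ge n_3+1}$.

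For the tail of $(\Phi_{-n})$, combine \eqref{eq:Phi_n} with the factorization \eqref{formula <f_n | 1>} to write
$$
\Phi_{-n}(u) \;=\; \frac{a_n(u)\,\1 h_n(u) \,|\, 1 \2}{\sqrt[+]{\kappa_n(u)}} \;=\; b_n(u)\,\sqrt{n}\,\Psi_n(u),
\qquad
b_n(u) := \frac{a_n(u)}{\sqrt[+]{n\kappa_n(u)}}\, ,
$$
where $\Psi_n(u) = \1 h_n(u) \,|\, 1 \2$ is the $n$th coordinate of the pre-Birkhoff map of Proposition \ref{prop:Psi}. By Lemma \ref{estim1 kappa+mu} one has $|n\kappa_n(u) - 1| \le \tfrac{7}{12}e^{1/3} < 1$ for $n > n_2$, so $1/\sqrt[+]{n\kappa_n(u)}$ is an analytic scalar function of $u$, uniformly bounded in both $n > n_2$ and $u \in U^\sigma$; together with Lemma \ref{lem:a} this makes $(b_n)_{n > n_3}$ analytic and bounded as a map $U^\sigma \to \ell^\infty_{n > n_3}$. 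The factorization then directly produces the desired $\h^{s+1/2}$-bound: for $u \in U^s$,
$$
\sum_{n > n_3} \1 n \2^{2s+1}\,|\Phi_{-n}(u)|^2 \;\le\; \Bigl( \sup_{n > n_3}|b_n(u)| \Bigr)^{2} \sum_{n > n_3} \1 n \2^{2(s+1)}\,|\Psi_n(u)|^2 \;\le\; \|b(u)\|^2_{\ell^\infty_{n > n_3}}\,\|\Psi(u)\|^2_{\h^{s+1}_{\ge n_0}},
$$
and the right-hand side is locally bounded on $U^s$ thanks to Proposition \ref{prop:Psi}.

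Since each scalar component $\Phi_{-n}: U^s \to \C$ is analytic by Corollary \ref{local extension1 Phi} and the sequence $(\Phi_{-n})_{n > n_3}$ is locally uniformly bounded in $\h^{s+1/2}_{\ge n_3+1}$, the criterion \cite[Theorem A.3]{KP-book} (already used in the proof of Lemma \ref{lem:a}) yields analyticity as an $\h^{s+1/2}$-valued map. The tail $(\Phi_n(u))_{n > n_3}$ is then obtained from the reality relation $\Phi_n(u) = \overline{\Phi_{-n}(\overline u)}$ of \eqref{eq:Phi_n}: by construction $U^s$ is a ball centered at the real potential $w$ intersected with a ball centered at the real finite gap potential $w_N$, hence invariant under $u \mapsto \overline u$, and composing the analytic tail of $(\Phi_{-n})$ with this anti-holomorphic involution and a scalar conjugation restores analyticity in $u$. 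No genuine obstacle appears at this stage; the substance of the argument is already contained in Proposition \ref{prop:Psi} and Lemma \ref{lem:a}, which themselves rest on the delicate Vanishing Lemma \ref{prop:delta_n-analyticity} of Section \ref{sec:the_delta_map}.
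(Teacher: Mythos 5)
Your proposal is correct and follows essentially the same route as the paper: the paper's own ``proof'' is the one-line assembly ``Combining Corollary~\ref{local extension1 Phi}, formula~\eqref{formula <f_n | 1>}, Proposition~\ref{prop:Psi}, and Lemma~\ref{lem:a}, we arrive at\dots,'' and your factorization $\Phi_{-n}(u)=b_n(u)\sqrt{n}\,\Psi_n(u)$ with $b_n=a_n/\sqrt[+]{n\kappa_n}$, the resulting $\ell^2$-weighted estimate, and the appeal to the analyticity criterion of \cite[Theorem A.3]{KP-book} spell out exactly what the authors intend. The treatment of the $+$-branch via the reality relation $\Phi_n(u)=\overline{\Phi_{-n}(\overline u)}$ and the conjugation-invariance of $U^s$ is also precisely what Corollary~\ref{local extension1 Phi} sets up.
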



\section{Proof of Theorem \ref{th:Phi} and Theorem \ref{th:well-posedness}}\label{proof main results}
In this section we prove Theorem \ref{th:Phi}, stated in Section \ref{Introduction}, and then use it to show Theorem \ref{th:well-posedness}. 

By \cite[Theorem 1.1]{GKT3} (near zero) and 
 Proposition \ref{Phi analytic} (away from zero), the Birkhoff map
$$
\Phi: H^s_{r,0} \to \h^{s+1/2}_{r,0}, \, u \mapsto \big( (\overline{\Phi_{-n}(\overline u)} )_{n\le -1}, (\Phi_n(u))_{n\ge 1}\big)
$$ 
is real analytic for any $s > -1/2$, where by \eqref{def Birkhoff real}, 
\begin{equation}\label{recall Phi_n}
\Phi_n(u)=\frac{\1 1 | f_n(u) \2}{\sqrt[+]{\kappa_n(u)}}, \qquad n\ge 1.
\end{equation}
In a first step we prove the following
\begin{Prop}\label{loc diffeo}
For any $u \in H^s_{r,0}$, $s > -1/2$, the differential of $\Phi$ at $u$,
$d_u\Phi : H^s_{r,0} \to \h^{s+1/2}_{r,0}$, is a linear isomorphism.
\end{Prop}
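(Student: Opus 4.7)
The plan is to deduce that $d_u\Phi$ is a linear isomorphism from two complementary properties: surjectivity, which comes from the canonical relations of Proposition \ref{canonical relations}, and the Fredholm property of index zero, which comes from approximating $u$ by finite gap potentials. Since a surjective Fredholm operator of index zero between Banach spaces is an isomorphism, these two properties together establish Proposition \ref{loc diffeo}.

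\emph{Surjectivity via canonical relations.} By Corollary \ref{gradients of Birkhoff coordinates}, the gradients $\nabla_0 \Phi_n(u)$ belong to $H^{s+1}_{c,0}$ for every $n \ge 1$, hence $\partial_x \nabla_0 \Phi_n(u)$ and $\partial_x \nabla_0 \overline{\Phi_n}(u)$ lie in $H^s_{c,0}$. Writing $\{F,G\} = \langle \partial_x \nabla F, \nabla G\rangle$ and using integration by parts in the bilinear pairing, the canonical relations of Proposition \ref{canonical relations} give
$$
d_u \Phi_k\bigl(\partial_x \nabla_0 \overline{\Phi_n}(u)\bigr) = i\,\delta_{kn}, \qquad d_u \Phi_k\bigl(\partial_x \nabla_0 \Phi_n(u)\bigr) = 0, \quad \forall\, k, n \ge 1.
$$
For arbitrary $\zeta = (\zeta_n)_{n \ne 0} \in \h^{s+1/2}_{r,0}$ with $\zeta_{-n} = \overline{\zeta_n}$, I would then propose the real-valued candidate preimage $h = -i \sum_{n \ge 1}\bigl(\zeta_n\, \partial_x \nabla_0 \overline{\Phi_n}(u) - \overline{\zeta_n}\, \partial_x \nabla_0 \Phi_n(u)\bigr)$, which formally satisfies $d_u\Phi(h) = \zeta$. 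The analytic heart of this step is to verify convergence of the series in $H^s_{r,0}$ and boundedness of the map $\zeta \mapsto h$; the necessary uniform-in-$n$ estimates on $\|\partial_x \nabla_0 \Phi_n(u)\|_s$ follow from the explicit formula \eqref{recall Phi_n}, the expression for $\nabla_0\Phi_n$ given in Corollary \ref{gradients of Birkhoff coordinates}, and the asymptotic controls on $f_n(u)$, $\kappa_n(u)$, and $\mu_n(u)$ developed in Sections \ref{sec:Psi}--\ref{sec.normalized eigenfunctions}.

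\emph{Fredholm property via approximation.} By Proposition \ref{Phi analytic}, the map $u \mapsto d_u\Phi \in \LL(H^s_{r,0}, \h^{s+1/2}_{r,0})$ is continuous in the operator norm on a neighborhood of $u$ in $H^s_{c,0}$. For any $w_N \in \mathcal U_N$, the spectral structure \eqref{spec theory 1}--\eqref{spec theory 4} of $L_{w_N}$ and the vanishing $\Phi_n(w_N) = 0$ for $n > N$ allow one to verify directly that $d_{w_N}\Phi$ is Fredholm of index zero: the restriction of $\Phi$ to $\mathcal U_N$ is a diffeomorphism onto the finite-dimensional block $\{\zeta \in \h^{s+1/2}_{r,0} : \zeta_n = 0,\ |n|>N\}$, while on directions transverse to $\mathcal U_N$ the operator $d_{w_N}\Phi$ can be compared with the isomorphism $d_0\Phi$ from \cite{GKT3}, since the eigenfunctions $f_n(w_N) = g_\infty(w_N)e^{inx}$ for $n \ge N$ differ from the Fourier basis only through multiplication by the unitary factor $g_\infty(w_N)$. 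For arbitrary $u \in H^s_{r,0}$, one selects a finite gap $w_N$ with $\|u - w_N\|_s$ sufficiently small; the openness of Fredholm operators of index zero in the operator norm topology then yields that $d_u\Phi$ is itself Fredholm of index zero.

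The main obstacle will be the convergence step in the surjectivity argument: for $s$ throughout the full range $s > -1/2$, obtaining uniform-in-$n$ estimates on $\|\partial_x \nabla_0 \Phi_n(u)\|_s$ sharp enough to pair with an arbitrary $\zeta \in \h^{s+1/2}_{r,0}$ requires the full strength of the perturbative analysis of $h_n$, $\mu_n$, $\kappa_n$, and $\delta_n$ developed in Sections \ref{sec:Psi}--\ref{sec:the_Birkhof_map}, together with careful bookkeeping of the Sobolev weights. By contrast, once the explicit structure of $d_{w_N}\Phi$ on $\mathcal U_N$ is made precise, the Fredholm step is essentially mechanical.
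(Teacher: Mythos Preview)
Your overall architecture---Fredholm of index zero plus the canonical relations---matches the paper's. But the two steps are executed quite differently, and your surjectivity step contains a real difficulty that the paper sidesteps.

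\medskip

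\textbf{Surjectivity.} You propose to build an explicit preimage $h=-i\sum_{n\ge 1}\bigl(\zeta_n\,\partial_x\nabla_0\overline{\Phi_n}(u)-\overline{\zeta_n}\,\partial_x\nabla_0\Phi_n(u)\bigr)$ and then prove the series converges in $H^s_{r,0}$. You correctly identify this as the main obstacle, and it is a genuine one: the paper only establishes that each $\nabla_0\Phi_n(u)\in H^{s+1}_{c,0}$ (Corollary \ref{gradients of Birkhoff coordinates}), with no decay in $n$. Extracting uniform bounds $\|\partial_x\nabla_0\Phi_n(u)\|_s\lesssim n^{-s-1/2}$ sharp enough to sum against an arbitrary $\zeta\in\h^{s+1/2}_{r,0}$ would require substantial additional work not contained in Sections~\ref{sec:Psi}--\ref{sec:the_Birkhof_map}. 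The paper avoids this entirely by a duality argument: once $d_u\Phi$ is Fredholm of index zero, surjectivity is equivalent to injectivity of the transpose $(d_u\Phi)^\top:\h^{-s-1/2}_{r,0}\to H^{-s}_{r,0}$. If $(d_u\Phi)^\top(\overline z,z)=0$, pair with the single test vector $\partial_x\nabla_0\Phi_k(u)\in H^s_{c,0}$ and use the canonical relations to conclude $z_k=0$. No summation over $n$, no decay estimates needed.

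\medskip

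\textbf{Fredholm property.} Your plan---verify Fredholm at a nearby $w_N\in\mathcal U_N$ and invoke openness---is correct in outline but leaves the verification at $w_N$ vague (``compared with the isomorphism $d_0\Phi$''). The paper takes a different route: for every $u\in H^s_{r,0}$ it writes $\pi_{r,0}\,d_u\Phi=-G(u)+A(u)$, where $G(u)[v]=\bigl(n^{-1/2}\,\widehat{v\,\overline{g_\infty}}(n)\bigr)_{n\ge 1}$ is explicit and $A(u)$ is a remainder. It then shows (i) $G(u)$ is Fredholm of index zero for every $u$, by decomposing multiplication by $\overline{g_\infty}$ into a Toeplitz part (a linear isomorphism on $H^s_{+,0}$ by Lemma \ref{Fredholm}) and a compact Hankel part; and (ii) $A(u)$ is compact, using continuity of $u\mapsto A(u)$, density of finite gap potentials, and an explicit computation showing that for $u\in\mathcal U_N$ the operator $A(u)$ gains a full derivative. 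This is where the finite gap approximation enters---to prove compactness of the remainder $A(u)$, not to directly approximate $d_u\Phi$. The Toeplitz/Hankel analysis is the substantive content your sketch is missing.

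\medskip

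Finally, note that the canonical relations (Proposition \ref{canonical relations}) are stated only for $-1/2<s\le 0$. The paper handles $s>0$ separately (Lemma \ref{Lemma 3}): injectivity of $d_u\Phi$ on $H^s_{r,0}$ follows from injectivity on $L^2_{r,0}$, and Fredholm index zero then gives the isomorphism.
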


The proof of Proposition \ref{loc diffeo} is split up into the following three lemmas.

\begin{Lem}\label{Lemma 1}
For any $u \in H^s_{r,0}$ with $s > -1/2$, 
$d_u\Phi : H^s_{r,0} \to \h^{s+1/2}_{r,0}$, is a Fredholm operator of index $0$.
\end{Lem}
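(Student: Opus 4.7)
The plan is to approximate $u$ by the finite gap potentials $w_N$ of \eqref{def w_N}. Since $\Phi : H^s_{r,0} \to \h^{s+1/2}_{r,0}$ is real analytic --- by \cite{GKT3} near $0$ and by Proposition \ref{Phi analytic} away from $0$ --- the assignment $v \mapsto d_v\Phi$ is continuous into $\LL(H^s_{r,0},\h^{s+1/2}_{r,0})$, so $\|d_u\Phi - d_{w_N}\Phi\|\to 0$ as $N \to \infty$. Since both the Fredholm property and the Fredholm index are stable under small perturbations in operator norm, it suffices to prove that $d_{w_N}\Phi$ is Fredholm of index zero for some sufficiently large $N$.

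Fix such an $N$ and analyze $d_{w_N}\Phi$ at the finite gap point $w_N \in \mathcal U_N$. By \cite{GK,GKT1}, $\mathcal U_N$ is a real analytic submanifold of $H^s_{r,0}$ of real dimension $2N$ with tangent space $T_{w_N}\mathcal U_N$, and $\Phi|_{\mathcal U_N}$ is a real analytic diffeomorphism onto an open subset of the finite dimensional subspace
\[
V_N := \{z \in \h^{s+1/2}_{r,0} \, : \, z_n = 0 \text{ for all } |n| > N \}.
\]
In particular, $d_{w_N}\Phi$ maps $T_{w_N}\mathcal U_N$ isomorphically onto $V_N$. Moreover, since $\Phi_n$ vanishes identically on $\mathcal U_N$ for $|n| > N$, the functional $d_{w_N}\Phi_n$ annihilates $T_{w_N}\mathcal U_N$ for those $n$. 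Choosing a closed complement $W$ of $T_{w_N}\mathcal U_N$ in $H^s_{r,0}$ and setting $V_N^{c} := \{z \in \h^{s+1/2}_{r,0} : z_n = 0 \text{ for all } |n| \le N\}$, the differential takes the upper triangular form
\[
d_{w_N}\Phi = \begin{pmatrix} A & B \\ 0 & D \end{pmatrix} : T_{w_N}\mathcal U_N \oplus W \longrightarrow V_N \oplus V_N^{c},
\]
with $A$ a finite-dimensional isomorphism and $B$ bounded. The Fredholm property and index of $d_{w_N}\Phi$ therefore coincide with those of the single block $D : W \to V_N^{c}$, so it remains to prove that $D$ is Fredholm of index zero.

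To treat $D$ I would use the pre-Birkhoff map $\Psi$ of Proposition \ref{prop:Psi}. At $w_N$ one has $h_n(w_N) = f_n(w_N) = g_\infty(w_N) e^{inx}$ for $n > N$, and differentiating the Neumann expansion \eqref{eq:neumann_series} at $v = 0$ followed by contour integration on $\partial D_n$ yields an explicit integral representation of $d_{w_N}\Psi_n[v]$. Combining this with the formulas \eqref{formula with f_n, a_n}--\eqref{formula <f_n | 1>}, which relate $f_n(u)$ to $h_n(u)$ through the scalar factors $a_n$ and $\sqrt[+]{\mu_n}$, together with the values $\mu_n(w_N) = 1$ and the explicit $\kappa_n(w_N)$ from Lemma \ref{estim1 kappa+mu}, one sees that the leading ($m=0$) part of $D$ is, up to multiplication by the unimodular factor $\overline{g_\infty(w_N)}$ and bounded scalar weights, the standard Fourier map $v \mapsto (\widehat{v}(-n))_{|n| > N}$, which is an isomorphism onto $V_N^{c}$. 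The higher order Neumann contributions ($m \ge 1$) are controlled by Lemma \ref{key lemma} and by the extra regularity $\nabla_0\Phi_n(w_N) \in H^{s+1}_{c,0}$ supplied by Corollary \ref{gradients of Birkhoff coordinates}; they factor through the compact Sobolev embedding $H^{s+1}_c \hookrightarrow H^s_c$ and therefore define compact operators $W \to V_N^{c}$. Hence $D$ is a compact perturbation of an isomorphism, which makes it Fredholm of index zero. The principal obstacle is the rigorous identification of this ``Fourier part'' and the accompanying compactness estimate for the remainder, as it requires tracking uniformly in $n$ the derivatives of the scaling factors $a_n, \mu_n, \kappa_n$ at $w_N$ alongside the term by term differentiation of the Neumann series already analyzed in Section \ref{sec:Psi}.
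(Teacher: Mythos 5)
Your high-level structure — isolate a ``leading Fourier part'' that is Fredholm of index $0$ and show the rest is compact — is the right idea and is indeed how the paper argues, but two of your steps contain genuine gaps.

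The first gap is in the reduction to finite gap potentials. You argue that, since $\|d_u\Phi - d_{w_N}\Phi\| \to 0$ and the Fredholm property together with the index is stable under small operator-norm perturbations, it suffices to establish that $d_{w_N}\Phi$ is Fredholm of index $0$ for some large $N$. This inference is backwards. Openness of the set of Fredholm operators of a fixed index says that a small perturbation \emph{of a given Fredholm operator} stays Fredholm with the same index, with a perturbation radius depending on the operator one starts from. It does not say that a limit of Fredholm operators is Fredholm: take $T_N = \frac{1}{N}\,{\rm Id}$ on $\ell^2$, each an isomorphism, converging in norm to the zero operator, which is not Fredholm. To pass from ``$d_{w_N}\Phi$ is Fredholm index $0$'' to a conclusion about the limit $d_u\Phi$ you would need uniform control of the decompositions (a uniformly invertible ``main part'' and compact remainders that converge), which your sketch does not provide. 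The paper sidesteps this entirely: it writes $\pi_{r,0}\,d_u\Phi = -G(u) + A(u)$ and proves, directly for \emph{every} $u$, that $G(u)$ (defined in \eqref{75bis}) is Fredholm of index $0$ and that $A(u)$ (defined in \eqref{75ter}) is compact. The density-of-finite-gap argument is used only for the compactness of $A(u)$ in Lemma \ref{A compact}, where it is legitimate precisely because the set of compact operators is \emph{closed} in operator norm — unlike the set of Fredholm operators of a given index, which is open.

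The second gap is in your treatment of the block $D$. You assert that, up to the unimodular factor $\overline{g_\infty(w_N)}$ and bounded scalar weights, the $m=0$ contribution to $D$ is a weighted Fourier map and hence an isomorphism onto $V_N^c$. Multiplication by $\overline g_\infty$ is, however, a convolution in Fourier; it does not commute with the projection $\Pi$ onto nonnegative modes and therefore does not reduce to ``scalar weights.'' The map $v \mapsto \big(\,n^{-1/2}\widehat{v\,\overline g_\infty}(n)\,\big)_{n \ge 1}$ genuinely mixes the positive and negative Fourier modes of $v$, and the correct analysis — carried out in the paper via the decomposition into the restricted Toeplitz operator $T^{(1)}_{\overline g_\infty}$ acting on $\Pi v$ and the Hankel operator $H_{\overline g_\infty}$ acting on $({\rm Id}-\Pi)v$ (see \eqref{Toeplitz/Hankel}, \eqref{Toeplitz restricted} and Lemma \ref{Fredholm}) — shows that $G(u)$ is Fredholm of index $0$, not an isomorphism. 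The Toeplitz factor is shown to be invertible by an inner/outer-type factorization $g_\infty = g_+ g_-$, and the Hankel factor is compact. Your stated justification (``bounded scalar weights'') would not survive close inspection even though it happens to land on a true conclusion.

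Your block-triangular reduction of $d_{w_N}\Phi$ via $T_{w_N}\mathcal U_N$ and $V_N$, and the claim that the Fredholm property of a block upper-triangular operator with invertible upper-left block is governed by the lower-right block, are correct and would be a reasonable alternative route \emph{if} the two issues above were repaired. As it stands, though, both the reduction to $w_N$ and the Fourier analysis of $D$ require substantial additional work; the paper avoids them by working at an arbitrary $u$, identifying the model Fredholm operator $G(u)$ explicitly, and invoking finite gap potentials only where closedness of the target set (compact operators) makes a limiting argument sound.
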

\begin{Lem}\label{Lemma 2}
For any $u \in H^s_{r,0}$ with $ -1/2 < s \le 0$,
$d_u\Phi : H^s_{r,0} \to \h^{s+1/2}_{r,0}$, is a linear isomorphism.
\end{Lem}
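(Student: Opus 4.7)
The plan is to exploit the canonical relations of Proposition \ref{canonical relations} in order to produce explicit preimages under $d_u\Phi$ of finitely supported sequences, and then to conclude surjectivity from density combined with the closed-range property provided by Lemma \ref{Lemma 1}; since $d_u\Phi$ is Fredholm of index zero, surjectivity will yield the claimed linear isomorphism.

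First I would fix $u \in H^s_{r,0}$ and, using Corollary \ref{gradients of Birkhoff coordinates} together with the identity $\nabla\overline{\Phi_k}(u)=\overline{\nabla\Phi_k(u)}$ (valid for real $u$ since $d_u\overline{\Phi_k}[v]=\overline{d_u\Phi_k[v]}$ when $v$ is real), observe that
$$
\xi_k := \partial_x\nabla\overline{\Phi_k}(u)=\partial_x\overline{\nabla\Phi_k(u)}\in H^s_{c,0},\qquad k\ge 1.
$$
By definition of the Gardner bracket one has $d_u G[\partial_x\nabla F(u)]=\{F,G\}(u)$, so the canonical relations $\{\Phi_n,\Phi_k\}=0$ and $\{\Phi_n,\overline{\Phi_k}\}=-i\delta_{nk}$ from Proposition \ref{canonical relations}, combined with the antisymmetry of the bracket, yield
$$
d_u\Phi_n[\xi_k]=\{\overline{\Phi_k},\Phi_n\}(u)=i\delta_{nk},\qquad d_u\overline{\Phi_n}[\xi_k]=\{\overline{\Phi_k},\overline{\Phi_n}\}(u)=0.
$$

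Second, for any $z=(z_n)_{n\ne 0}\in\h^{s+1/2}_{r,0}$ with only finitely many nonzero components, I would set
$$
v:=-i\sum_{k\ge 1}\bigl(z_k\xi_k-\overline{z_k}\,\overline{\xi_k}\bigr)=2\,\mathrm{Im}\sum_{k\ge 1}z_k\xi_k\,.
$$
This is a finite linear combination of elements of $H^s_{c,0}$; since $\overline{\xi_k}$ is the complex conjugate of $\xi_k$, the function $v$ is real valued and has zero mean, hence belongs to $H^s_{r,0}$. Applying the Poisson bracket identities above and the reality constraint $z_{-n}=\overline{z_n}$, one checks directly that $d_u\Phi_n[v]=z_n$ for every $n\ge 1$, and then $d_u\Phi_{-n}[v]=\overline{d_u\Phi_n[v]}=z_{-n}$ because $\Phi_{-n}=\overline{\Phi_n}$ on $H^s_{r,0}$.

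Consequently, the image of $d_u\Phi$ contains every finitely supported element of $\h^{s+1/2}_{r,0}$, and these form a dense subspace. By Lemma \ref{Lemma 1} the image of $d_u\Phi$ is closed, so it must equal all of $\h^{s+1/2}_{r,0}$; combined with $\mathrm{ind}(d_u\Phi)=0$, this forces $d_u\Phi$ to be a linear isomorphism. The principal difficulty in executing this plan is not analytic but notational: one must carefully track the bilinear (as opposed to sesquilinear) pairing used to define $\nabla$, the interpretation of $\overline{\Phi_n}$ as a functional on the real space $H^s_{r,0}$, and the signs produced by antisymmetry of the Gardner bracket, so that the computation of $d_u\Phi_n[\xi_k]$ matches the canonical relations of Proposition \ref{canonical relations} with the correct sign.
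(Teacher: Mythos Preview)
Your argument is correct and uses exactly the same ingredients as the paper's proof: Lemma \ref{Lemma 1} (Fredholm of index $0$, hence closed range) together with the canonical relations of Proposition \ref{canonical relations} applied via the vector fields $\partial_x\nabla\Phi_k$. The only difference is the packaging: the paper shows the transpose $(d_u\Phi)^\top$ is injective by pairing a hypothetical kernel element against $\partial_x\nabla_0\zeta_k$ and reading off $z_k=0$ from the Poisson relations, whereas you prove surjectivity of $d_u\Phi$ directly by exhibiting $\partial_x\nabla\overline{\Phi_k}$ as a preimage of the $k$th basis vector and invoking density plus closed range --- these are the two standard dual ways of using a symplectic/canonical structure to upgrade a Fredholm-index-zero statement to an isomorphism.
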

\begin{Lem}\label{Lemma 3}
For any $u \in H^s_{r,0}$ with $s > 0$,
$d_u\Phi : H^s_{r,0} \to \h^{s+1/2}_{r,0}$, is a linear isomorphism.
\end{Lem}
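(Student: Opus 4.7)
\textbf{Plan for Lemma \ref{Lemma 3}.} The strategy is to bootstrap from the regularity case $s=0$ covered by Lemma \ref{Lemma 2}, using Lemma \ref{Lemma 1} to upgrade injectivity to bijectivity. The key preliminary observation is the compatibility of the differentials across the scales: for $s>0$ one has continuous inclusions $H^s_{r,0}\hookrightarrow H^0_{r,0}$ and $\h^{s+1/2}_{r,0}\hookrightarrow \h^{1/2}_{r,0}$, and since $\Phi$ is defined on both scales by the same formula \eqref{recall Phi_n} (the eigenvalues $\lambda_n(u)$, eigenfunctions $f_n(u)$, and scaling factors $\kappa_n(u)$ are intrinsic to $L_u$ and do not depend on the ambient Sobolev space), the differential $d_u\Phi : H^s_{r,0}\to\h^{s+1/2}_{r,0}$ at a given $u\in H^s_{r,0}$ is the restriction to $H^s_{r,0}$ of the differential $d_u\Phi : H^0_{r,0}\to\h^{1/2}_{r,0}$ at the same point $u$, now viewed as an element of $H^0_{r,0}$.

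First I would establish injectivity. Suppose $v\in H^s_{r,0}$ satisfies $d_u\Phi(v)=0$ in $\h^{s+1/2}_{r,0}$. By the compatibility just noted and the inclusion $\h^{s+1/2}_{r,0}\subset\h^{1/2}_{r,0}$, the vector $v\in H^0_{r,0}$ lies in the kernel of $d_u\Phi : H^0_{r,0}\to\h^{1/2}_{r,0}$. Since $u\in H^s_{r,0}\subset H^0_{r,0}$, Lemma \ref{Lemma 2} applied at regularity $0$ asserts that this differential is a linear isomorphism, whence $v=0$.

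For surjectivity I would then invoke Lemma \ref{Lemma 1}, which asserts that $d_u\Phi:H^s_{r,0}\to\h^{s+1/2}_{r,0}$ is Fredholm of index zero. Combined with the injectivity just established, this immediately yields that $d_u\Phi$ is onto, and therefore a linear isomorphism, completing the proof. The substantive work, namely the Fredholm property of $d_u\Phi$ at higher regularity and the bijectivity of $d_u\Phi$ at $s\le 0$, has already been handled in Lemmas \ref{Lemma 1} and \ref{Lemma 2}; the only potential obstacle here is ensuring that the differential on $H^s_{r,0}$ really does extend/restrict compatibly with the differential on $H^0_{r,0}$, but this follows at once from the fact that $\Phi$ on $H^s_{r,0}$ is just the restriction of $\Phi$ on $H^0_{r,0}$ together with the chain rule applied to the inclusion $H^s_{r,0}\hookrightarrow H^0_{r,0}$.
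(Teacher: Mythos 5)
Your proof is correct and follows essentially the same route as the paper: identify $d_u\Phi$ on $H^s_{r,0}$ as the restriction of the $L^2$-case differential, deduce injectivity from Lemma \ref{Lemma 2} applied at regularity $0$, and conclude bijectivity from the index-zero Fredholm property of Lemma \ref{Lemma 1}.
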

Lemma \ref{Lemma 3} follows from Lemma \ref{Lemma 1} and Lemma \ref{Lemma 2}.
We prove it first.
\begin{proof}[Proof of Lemma \ref{Lemma 3}]
Let $u \in H^s_{r,0}$ with $s > 0$. First note that $d_u\Phi : H^s_{r,0} \to \h^{s+1/2}_{r,0}$
is the restriction of $d_u\Phi : L^2_{r,0} \to \h^{1/2}_{r,0}$ to $H^s_{r,0}$ and hence by Lemma \ref{Lemma 2},
$d_u\Phi : H^s_{r,0} \to \h^{s+1/2}_{r,0}$ is one-to-one.
By Lemma \ref{Lemma 1}, it then follows that $d_u\Phi : H^s_{r,0} \to \h^{s+1/2}_{r,0}$ is a linear isomorphism.
\end{proof}

Let us now turn to the proof of Lemma \ref{Lemma 2}. It uses Lemma \ref{Lemma 1} and the canonical relations,
recorded in Proposition \ref{canonical relations}.
\begin{proof}[Proof of Lemma \ref{Lemma 2}]
Let $u \in H^s_{r,0}$ with $ -1/2 < s \le 0$. According to Lemma \ref{Lemma 1}, 
$d_u\Phi : H^s_{r,0} \to \h^{s+1/2}_{r,0}$, is a Fredholm operator of index $0$.
Hence it is a linear isomorphism, once we show that it is onto, or equivalently,
that the transpose $(d_u\Phi)^\top$ of $d_u\Phi$ is one-to-one.
Note that $(d_u\Phi)^\top: \h^{-s-1/2}_{r,0} \to H^{-s}_{r,0}$ is given by
$$
(d_u\Phi)^\top  (\overline{z},   z )
= \sum_{n \ge 1} \overline{z_n} \, \nabla_0 \overline{\zeta_n}(u) + z_n \, \nabla_0 \zeta_n(u) ,
\qquad z = (z_n)_{n \ge 1} \in h_+^{-s -1/2} , 
$$
where $\overline z = (\overline z_n)_{n \ge 1} $ and  where $\nabla_0 \zeta_n$ is the $L^2_{r,0}$-gradient
of $\zeta_n = \Phi_n$. 
By Corollary \ref{gradients of Birkhoff coordinates},
 $\nabla_0 \zeta_n \in H^{s+1}_{c, 0}$ and hence $\partial_x \nabla_0 \zeta_n \in H^{s}_{c, 0}$ for any $n \ge 1$.
Now assume that $(\overline z, z) \in  \h^{-s-1/2}_{r,0}$ is in the kernel of $(d_u\Phi)^\top$,
$(d_u\Phi)^\top(\overline z, z) = 0$. Taking the inner product with $\partial_x \nabla_0 \zeta_k$, $k \ge 1$, one gets
\begin{equation}\label{kernel condition}
0 = \sum_{n \ge 1} \overline{z_n} \1 \nabla_0 \overline{\zeta_n} | \partial_x \nabla_0 \zeta_k \2  + 
z_n \1 \nabla_0 \zeta_n | \partial_x \nabla_0 \zeta_k \2 , \qquad \forall \, k \ge 1 \, .
\end{equation}
Since by Proposition \ref{canonical relations},
$$
\frac{1}{2\pi} \int_0^{2\pi}  \partial_x \nabla_0 {\overline \zeta_k}  \cdot \nabla_0 \zeta_n dx = \{\overline{ \zeta_k}, \zeta_n \} = i \delta_{kn}
 $$
 and
 $$
\frac{1}{2\pi} \int_0^{2\pi}  \partial_x \nabla_0 {\overline \zeta_k}  \cdot \nabla_0 \overline{\zeta_n} dx = \{\overline{ \zeta_k}, \overline{ \zeta_n} \} = 0 ,
 $$
it then follows from \eqref{kernel condition} that $z_k = 0$ for any $k \ge 1$ and hence $z=0$. This shows that 
$(d_u\Phi)^\top$ is one-to-one.
\end{proof}

It remains to prove Lemma \ref{Lemma 1}. First we need to make some preliminary considerations. We begin by computing
$$
d_u\Phi_n[v] = \1 \nabla_0 \zeta_n(u) , v \2  =  \1 \nabla_0 \zeta_n(u) | v \2, \qquad \forall \, v \in H^s_{r,0}, \ n \ge 1.
$$
By \eqref{recall Phi_n} we have
\begin{equation}\label{70bis}
 \1 \nabla_0 \zeta_n(u) | v \2 = -\frac 12 \frac{\delta \kappa_n}{\kappa_n(u)} \zeta_n(u) + \frac{\1 1 | \delta f_n \2}{\sqrt{\kappa_n(u)} },
\end{equation}
where $\delta \kappa_n = d_u \kappa_n[v]$ and similarly, $ \delta f_n = d_u f_n[v]$.
The latter can be computed as follows. From $(L_u - \lambda_n(u))f_n(u) = 0$, one gets
$$
(L_u - \lambda_n) \delta f_n -T_v f_n - (\delta \lambda_n) f_n = 0 ,
$$
where for notational convenience, we do not always indicate the dependence on $u$ and $v$.
Taking the inner product of both sides of the latter identity with $f_n $ and using that $L_u$ is self-adjoint, one gets
$$
0 - \1 T_v f_n | f_n \2 -  (\delta \lambda_n) \1 f_n | f_n \2 = 0 .
$$
Since $\1 f_n | f_n \2 = 1$ one concludes that $\delta \lambda_n = - \1 T_vf_n | f_n \2$ and therefore
$$
(L_u - \lambda_n) \delta f_n  = T_v f_n -  \1 T_v f_n | f_n \2 f_n  = P_n^\bot T_v f_n ,
$$
where $P_n^\bot \equiv P_n(u)^\bot = {\rm Id} - P_n(u) : H_+ \to H_+$. We conclude that
$$
P_n^\bot \delta f_n = (L_u - \lambda_n)^{-1} P_n^\bot(T_v f_n)
$$
and in turn, with $\delta f_n = P_n \delta f_n + P_n^\bot \delta f_n$,
$$
\1 1 | \delta f_n  \2 = \1 f_n | \delta f_n \2 \1 1 | f_n \2 + \1 P_n^\bot 1 |  (L_u - \lambda_n)^{-1} P_n^\bot(T_v f_n) \2 ,
$$
where we used that $ \1 f_n | P_n \delta f_n \2 =  \1 f_n | \delta f_n \2$.
Altogether, \eqref{70bis} then becomes
\begin{equation}\label{70ter}
\1 \nabla_0 \zeta_n | v \2 = - \frac 12 \frac{\delta \kappa_n}{\kappa_n} \zeta_n + \1 f_n | \delta f_n \2 \zeta_n
+ \frac{\1  (L_u - \lambda_n)^{-1} P_n^\bot1 | P_n^\bot(T_v f_n ) \2  }{\sqrt{\kappa_n} } .
\end{equation}
For $u \in \mathcal U_N$, the latter formula simplifies. For any $n > N$, one has
\begin{equation}\label{70quarto}
\lambda_n = n, \quad \gamma_n = 0, \quad \1 1 | f_n \2 = 0, \quad \zeta_n = 0, \quad   f_n = g_\infty e^{inx}, \quad  g_\infty(x) = e^{i\partial_x^{-1} u(x)} .
\end{equation}
In particular, for any $n > N$, 
\begin{equation}\label{70quinto}
1 =  \sum_{0 \le k \le N} \1 1 | f_k \2 f_k = P_n^\bot 1 .
\end{equation} 
Since
$$ \frac 1n + \frac{1}{ \lambda_k - n} = - \frac 1n \frac{\lambda_k}{n(1 - \lambda_k /n )}, \qquad \forall \, 0 \le k \le N ,
$$ 
it then follows from \eqref{70quinto} that
$$
 (L_u - \lambda_n)^{-1} P_n^\bot 1 = - \frac 1n \, -  \, \frac 1n \sum_{0 \le k \le N}  \frac{\lambda_k}{n(1 - \lambda_k /n )} \1 1 | f_k \2 f_k , \qquad \forall \, n > N .
$$
This together with \eqref{70ter} and \eqref{70quarto} then imply that for any $n > N$,
\begin{align}\label{variation zeta_n} 
\1 \nabla_0 \zeta_n | v \2  & = - \frac{1}{\sqrt{n}} \1 v \overline g_\infty | e^{inx} \2
+ \big(1 - \frac{1}{\sqrt{n\kappa_n}} \big) \frac{1}{\sqrt{n}} \1 v \overline g_\infty | e^{inx} \2 \nonumber \\
& - \frac 1n \frac{1}{\sqrt{n}} \frac{1}{\sqrt{n\kappa_n}} \sum_{0 \le k \le N} \frac{\lambda_k \1 1 | f_k \2}{1 - \lambda_k/n} \1 v\overline g_\infty f_k | e^{inx}\2 .
\end{align}
The latter formula suggests to introduce for any $u \in H^s_{r,0}$, $s > -1/2$,
\begin{equation}\label{75bis}
G(u) : H^s_{r,0} \to  \h^{s + 1/2}_+ , \, v \mapsto \big( \frac{1}{\sqrt{n}} \widehat{ v \overline g_\infty}(n) \big)_{n \ge 1} .
\end{equation}
\begin{Rem}
In \cite{GKT3}, we computed the differential of $\Phi$ at $u=0$, to be the weighted Fourier transform
$$
d_0 \Phi : H^s_{r,0} \to \h^{s+1/2}_{r,0}, \, v \mapsto ( - \frac{1}{\sqrt{|n|}} \widehat v(n) )_{n\ne 0} .
$$
Note that for $u=0$,  $g_\infty \equiv 1$ and hence $(d_0 \Phi_n)_{n \ge 1} = - G(0)$.
\end{Rem}

One verifies in a straightforward way that $G(u) \in \mathcal L(H^s_{r,0}, \h^{s + 1/2}_+)$ and that
$$
G: H^s_{r,0} \to \mathcal L(H^s_{r,0}, \h^{s + 1/2}_+), \, u \mapsto G(u) ,
$$
is continuous with respect to the operator norm topology on $ \mathcal L(H^s_{r,0}, \h^{s + 1/2}_+)$.
Define 
$\pi_{r,0} : \h^s_{r,0} \to  \h^s_+, \, (z_n)_{n \ne 0} \mapsto (z_n)_{n \ge 1}$, and for any $u \in H^s_{r,0}$, $s > -1/2$,
\begin{equation}\label{75ter}
A(u) :=  \pi_{r,0} \, d_u \Phi + G(u) .
\end{equation}
It follows that $A(u) \in \mathcal L(H^s_{r,0}, \h^{s + 1/2}_+)$ and that 
$$
A: H^s_{r,0} \to \mathcal L(H^s_{r,0}, \h^{s + 1/2}_+), \, u \mapsto A(u)
$$
is continuous.
\begin{Lem}\label{A compact}
For any $u \in H^s_{r,0}$ with $s > -1/2$, $A(u): H^s_{r,0} \to \h^{s+1/2}_+$ is a compact operator.
\end{Lem}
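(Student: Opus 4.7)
The plan is to exploit the density of finite gap potentials together with the continuity (already established in the excerpt) of $u \mapsto A(u)$ into the operator norm topology on $\mathcal L(H^s_{r,0}, \h^{s+1/2}_+)$. Since $\bigcup_{N \ge 1} \mathcal U_N$ is dense in $H^s_{r,0}$ for every $s > -1/2$ and since the compact operators form a closed subspace in the operator norm, it suffices to prove that $A(u)$ is compact whenever $u \in \mathcal U_N$ for some $N \ge 1$. The continuity result then automatically upgrades the conclusion to arbitrary $u \in H^s_{r,0}$ by writing such $u$ as the limit of a sequence in $\bigcup_N \mathcal U_N$.

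For a finite gap $u \in \mathcal U_N$, formula \eqref{variation zeta_n} is tailor-made. For $n > N$, the leading term $-\tfrac{1}{\sqrt n}\widehat{v\overline g_\infty}(n)$ cancels exactly with the corresponding row of $G(u) v$, so that
\begin{equation*}
[A(u) v]_n = \Bigl(1 - \frac{1}{\sqrt{n\kappa_n(u)}}\Bigr)\frac{\widehat{v\overline g_\infty}(n)}{\sqrt n} - \frac{1}{n^{3/2}\sqrt{n\kappa_n(u)}}\sum_{k=0}^{N}\frac{\lambda_k \langle 1 | f_k\rangle}{1 - \lambda_k/n}\widehat{v\overline g_\infty f_k}(n),
\end{equation*}
while for $1 \le n \le N$ the $n$th row is, by \eqref{70ter}, a bounded linear functional of $v$. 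I would split $A(u) = A^{(0)}(u) + A^{(1)}(u) + A^{(2)}(u)$ accordingly: $A^{(0)}$ consists of the $N$ low-frequency rows (finite rank, hence compact), $A^{(1)}$ is the diagonal multiplier $v \mapsto (\epsilon_n(u)\,\widehat{v\overline g_\infty}(n)/\sqrt n)_{n > N}$ with $\epsilon_n(u) := 1 - 1/\sqrt{n\kappa_n(u)}$, and $A^{(2)}$ is the remaining Toeplitz-like piece.

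The piece $A^{(2)}$ is the easy one: the extra factor $1/n^{3/2}$, combined with the $H^s$-boundedness of multiplication by the smooth functions $\overline g_\infty f_k$ (Lemma \ref{multi of functions}, using that finite gap potentials lie in $H^\infty$), shows that $A^{(2)}(u)$ maps $H^s_{r,0}$ boundedly into $\h^{s+3/2}_+$, which embeds compactly into $\h^{s+1/2}_+$ by a Rellich-type argument on weighted $\ell^2$-spaces. For $A^{(1)}$, truncation at $n \le K$ produces a finite-rank operator $A^{(1)}_K(u)$, and
\begin{equation*}
\|A^{(1)}(u) - A^{(1)}_K(u)\|_{\mathrm{op}} \le \Bigl(\sup_{n > K}|\epsilon_n(u)|\Bigr)\, \|G(u)\|_{\mathrm{op}},
\end{equation*}
so compactness of $A^{(1)}(u)$ reduces to the asymptotic $\epsilon_n(u) \to 0$ as $n \to \infty$.

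The main obstacle is thus the decay $n\kappa_n(u) \to 1$ for $u \in \mathcal U_N$, which goes beyond the uniform bound in Lemma \ref{estim1 kappa+mu}. I expect this to follow from the explicit description of $\kappa_n$ on the finite gap manifold $\mathcal U_N$ given in \cite{GK}, combined with the normalizations $\lambda_k = k$ and $f_k = g_\infty e^{ikx}$ for $k \ge N$: these give a product/residue representation of $\kappa_n$ for $n > N$ whose asymptotic is $1/n (1 + O(1/n))$, uniformly on compact subsets of $\mathcal U_N$. Once this decay is secured, the three pieces combine to give compactness of $A(u)$ on each $\mathcal U_N$, and the density-plus-continuity argument of the first paragraph concludes the proof for arbitrary $u \in H^s_{r,0}$.
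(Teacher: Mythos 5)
Your proposal is essentially the paper's proof. Both reduce to finite gap $u \in \mathcal U_N$ via density of $\bigcup_N \mathcal U_N$ and the operator-norm continuity of $u \mapsto A(u)$, then read compactness off formula \eqref{variation zeta_n}; both recognise that the crux is the asymptotics of $n\kappa_n(u)$ and that the product representation of $\kappa_n$ from \cite[Corollary 3.4]{GK} supplies it. The one place you diverge is the handling of the diagonal piece $A^{(1)}$: you argue via finite-rank truncation, which only needs $\epsilon_n(u) = 1 - 1/\sqrt{n\kappa_n(u)} \to 0$, whereas the paper proves the quantitative bound $|\epsilon_n(u)| \le C_1/n$ and uses it, as with $A^{(2)}$, to show $A^{(1)}(u)$ maps boundedly into $\h^{s+3/2}_{>N}$, so that compactness follows uniformly from the compact embedding $\h^{s+3/2}_+ \hookrightarrow \h^{s+1/2}_+$ rather than from a separate truncation argument. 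Your route is slightly more economical in what it needs from $\kappa_n$, but you stop short of actually deriving the decay of $\epsilon_n$ from the product formula (``I expect this to follow...''), which is precisely the computation the paper carries out: writing $n\kappa_n = \frac{n}{n - \lambda_0}\prod_{1 \le p \le N}(1 + \frac{\gamma_p}{n - \lambda_p})$ and applying elementary logarithm/exponential estimates to get the $O(1/n)$ rate.
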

\begin{proof}
Let $s > -1/2$. Since the set of compact operators  $H^s_{r,0} \to \h^{s + 1/2}_+$ is a closed subset of $ \mathcal L(H^s_{r,0}, \h^{s + 1/2}_+)$,
the operator $A: H^s_{r,0} \to \mathcal L(H^s_{r,0}, \h^{s + 1/2}_+) $ is continuous, 
and $\cup_{N \ge 0}\,  \mathcal U_N$ is dense in $H^s_{r,0}$ (cf. \cite[Theorem 1]{GK}, \cite[Theorem 6, Proposition 5]{GKT1}), 
 it suffices to prove that for any $u \in \mathcal U_N$, $N \ge 1$, $A(u) : H^s_{r,0} \to \h^{s+1/2}_+$
is compact. Let $u \in \mathcal U_N$ for some $N \ge 1$.
By \eqref{variation zeta_n} and \eqref{75ter}, the components $A_n(u)[v]$ of $A(u)[v] = (A_n(u)[v])_{n \ge 1}$ with $n > N$ can be written as
$$
A_n(u)[v] = \big( 1 - \frac{1}{\sqrt{n \kappa_n}}  \big) G_n(u)[v]
-  \frac 1n \frac{1}{\sqrt{n}} \frac{1}{\sqrt{n\kappa_n}} \sum_{0 \le k \le N} \frac{\lambda_k \1 1 | f_k \2}{1 - \lambda_k/n} \1 v\overline g_\infty f_k | e^{inx}\2 .
$$
We estimate the two terms separately.
By the product representation of $\kappa_n$ (cf. \cite[Corollary 3.4]{GK}) and since $\gamma_n = 0$ for $n > N$,
$$
n\kappa_n = \frac{n}{n - \lambda_0} \prod_{1 \le p \le N} (1 + \frac{\gamma_p}{n - \lambda_p}) , \qquad \forall \, n > N .
$$
Since $- \lambda_0 = | \lambda_0|$ (cf. \eqref{70quarto}), one gets for any $n > N$
$$
\begin{aligned}
\frac{1}{\sqrt{n \kappa_n}} & = \Big( \frac{n +|\lambda_0|}{n} \Big)^{1/2} \Big( \prod_{1 \le p \le N}  \big( 1 + \frac{\gamma_p}{n - \lambda_p}  \big) \Big)^{-1/2}\\
& = \exp\Big( \frac 12 \log \big(1 + \frac{|\lambda_0|}{n} \big) - \frac 12 \sum_{1 \le p \le N}  \log \big(1 + \frac{\gamma_p}{n - \lambda_p} \big)  \Big) .
\end{aligned}
$$
By the elementary estimates for $x \in \R$ and $y \ge 0$,
$$
|e^x - 1 | = \big|  \int_0^1\frac{d}{dt} e^{tx}  d t \big| \le |x|e^{|x|}, \qquad 
\big| \log(1 + y)  \big| = \big| \int_0^y  \frac{1}{1+t} dt\big| \le y ,
$$
it then follows with 
$$
x = \frac 12 \log (1 + \frac{|\lambda_0|}{n}) - \frac 12 \sum_{1 \le p \le N}  \log(1 + \frac{\gamma_p}{n - \lambda_p})
$$ 
that
$$
\big|  \frac{1}{ \sqrt{ n\kappa_n}} -1 \big| \le |x| e^{|x|} \le \big( \, \frac 12 \frac{|\lambda_0|}{n} 
+ \frac 12  \frac 1n \sum_{1 \le p \le N}   \frac{\gamma_p}{1 - p/n} \,  \big) \, e^x , 
$$
where we used that for $0 \le p \le N$,  $\lambda_p \le p$ (cf. \eqref{70quarto}).
Hence there exists a constant $C_1 \ge 1$ so that 
$$
\big|  \frac{1}{ \sqrt{ n\kappa_n}} -1 \big| \le \frac{C_1}{n}, \qquad \forall \, n > N .
$$
This implies that for $u$ given as above, 
$$
H^s_{r,0} \to \h^{s+1/2 + 1}_{> N}, \, v \mapsto 
\big( \big( 1 - \frac{1}{\sqrt{n \kappa_n}}  \big) G_n(u)[v] \big)_{n > N}
$$
is a bounded map.
Furthermore, for any $0 \le k \le N$,
$$
H^s_{r,0} \to \h^{s+1/2 + 1}_{> N}, \, v \mapsto 
\big( \frac 1n \frac{1}{\sqrt{n}} \frac{1}{\sqrt{n\kappa_n}}  \frac{\lambda_k \1 1 | f_k \2}{1 - \lambda_k/n} \1 v\overline g_\infty f_k | e^{inx}\2 \big)_{n > N}
$$
is a bounded map as well. Since the embedding $ \h^{s+1/2 + 1}_+ \to  \h^{s+1/2}_+$ is compact,   we then conclude that
$$
H^s_{r,0} \to \h^{s+1/2}_+, \, v \mapsto \big(  A_n(u)[v]  \big)_{n \ge 1}
$$
is compact.
\end{proof}

For the proof of Lemma \ref{Lemma 1} we need to make some further preparations. 
For any $u \in H^s_{r,0}$ with $s > -1/2$, let
\begin{equation}\label{Toeplitz/Hankel}
T_{\overline g_\infty} : H^s_+ \to H^s_+ , \, f \mapsto \Pi(\overline g_\infty f) \, ,
\qquad  
H_{\overline g_\infty} : H^s_- \to H^s_+, \, f \mapsto  \Pi( \overline g_\infty f) \, ,
\end{equation}
denote the Toeplitz operator 
and respectively,
the Hankel operator with symbol $\overline g_\infty$. Here  $H^s_- $ denotes
the complex conjugate of the Hardy space $H^s_+$,
$$
H^s_- :=  \{ f \in H^s_{c} : \, \widehat f (n) =0 \ \forall \,  n \ge 1 \}.
$$
Furthermore let
$$
H^s_{+,0} := \{ f \in H^s_{+} : \, \langle f | 1 \rangle =0 \} 
$$ 
and introduce the inclusion $\iota_+ : H^s_{+,0} \to H^s_+$, the projection
$$
\Pi_{\ge 1} : H^s_+ \to H^s_{+,0} \, , \ \sum_{n \ge 0} \widehat v(n) e^{inx} \mapsto  \sum_{n \ge 1} \widehat v(n) e^{inx} \, , 
$$
and the following version of the restriction of the Toeplitz operator to $H^s_{+,0}$,
\begin{equation}\label{Toeplitz restricted}
T^{(1)}_{\overline g_\infty} := \Pi_{\ge 1} T_{\overline g_\infty} \iota_+ : \ H^s_{+, 0} \to H^s_{+, 0}.
\end{equation}
It follows from the well known fact that the Toeplitz operator $T_{\overline g_\infty}: H^s_+ \to H^s_+$
is a linear isomorphism (cf. e.g. \cite[Theorem 10, p 309]{Lax}) that the same is true for $T^{(1)}_{\overline g_\infty}$.
For the convenience of the reader we include a proof.
\begin{Lem}\label{Fredholm}
For any $u \in H^s_{r,0}$ with $s > -1/2$,  the following holds:\\
$(i)$ $T_{\overline g_\infty}: H^s_+ \to H^s_+$ is a linear isomorphism.\\
$(ii)$ $T^{(1)}_{\overline g_\infty}: H^s_{+, 0} \to H^s_{+, 0}$ is a linear isomorphism.\\
$(iii)$ $H_{\overline g_\infty} : H_-^s \to H_+^s$ is a compact operator.
\end{Lem}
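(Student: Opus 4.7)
The plan is to exploit the structural properties of $g_\infty = e^{i\partial_x^{-1} u}$ for $u \in H^s_{r,0}$ with $s > -1/2$: the function $g_\infty$ is unimodular ($g_\infty \overline{g_\infty} \equiv 1$), lies in $H^{s+1}_c$ by Lemma \ref{regularity g_n}, is continuous on $\T$ by the Sobolev embedding $H^{s+1}_c \hookrightarrow C^0$ (valid since $s+1 > 1/2$), and has zero winding number as the exponential of a continuous real-valued loop. Lemma \ref{multi of functions} ensures that multiplication by $g_\infty$ and $\overline g_\infty$ defines bounded operators on $H^s_c$, so the three operators appearing in (i)--(iii) are all bounded between the stated spaces.

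For item (i), I would first invoke the classical Wiener--Hopf/Toeplitz inversion theorem \cite[Theorem 10, p.~309]{Lax} in the $L^2$ case, which applies precisely because $\overline g_\infty$ is continuous, unimodular, and of winding number zero. To extend to general $s > -1/2$, the strategy is a parametrix argument: a direct computation using $g_\infty \overline{g_\infty} \equiv 1$ gives, for every $f \in H^s_+$,
\[
(T_{g_\infty} T_{\overline g_\infty}) f \;=\; f \;-\; \Pi\bigl( g_\infty (\mathrm{Id} - \Pi)(\overline g_\infty f) \bigr),
\]
with an analogous identity for the reversed composition. The correction term is a composition of two Hankel-type operators and is compact by the argument of item (iii) below, so $T_{\overline g_\infty}$ is Fredholm of index zero on $H^s_+$. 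Injectivity is then propagated from the $L^2$ case via the Wiener--Hopf factorization $g_\infty = G_- G_+$ induced by splitting the mean-zero real primitive $\partial_x^{-1} u$ into its positive and negative Fourier parts.

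For item (ii), the plan is to exhibit $T^{(1)}_{\overline g_\infty}$ as similar to $T_{\overline g_\infty}$. Multiplication by $e^{ix}$ is a bounded linear isomorphism $U : H^s_+ \to H^s_{+,0}$ with inverse $U^{-1}(h) = e^{-ix} h$, and a short Fourier computation will yield, for every $h \in H^s_{+,0}$,
\[
U\, T_{\overline g_\infty}\, U^{-1}(h) \;=\; e^{ix}\, \Pi\bigl( \overline g_\infty\, e^{-ix} h \bigr) \;=\; \Pi_{\ge 1}(\overline g_\infty h) \;=\; T^{(1)}_{\overline g_\infty}(h).
\]
Thus $T^{(1)}_{\overline g_\infty}$ is conjugate to $T_{\overline g_\infty}$ through the isomorphism $U$, and (ii) follows from (i).

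For item (iii), I would proceed by finite-rank approximation. Boundedness of $H_{\overline g_\infty} : H^s_- \to H^s_+$ is immediate from Lemma \ref{multi of functions}(i). Approximating $\overline g_\infty$ in $H^{s+1}_c$ by its truncated Fourier series $p_N := \sum_{|n|\le N} \widehat{\overline g_\infty}(n)\, e^{inx}$, the same bilinear estimate gives
\[
\| H_{\overline g_\infty} - H_{p_N} \|_{H^s_- \to H^s_+} \;\le\; C\, \|\overline g_\infty - p_N\|_{s+1} \;\longrightarrow\; 0.
\]
The essential observation is that each $H_{p_N}$ has finite rank: for $f \in H^s_-$ (Fourier support in $\{m \le 0\}$) and $n \in \Z$, the product $e^{inx} f$ has Fourier support contained in $(-\infty, n]$, so $\Pi(e^{inx} f) = 0$ when $n < 0$ and has Fourier support in $\{0,1,\ldots,n\}$ when $n \ge 0$. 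Hence $\mathrm{Range}(H_{p_N}) \subset \mathrm{span}\{1, e^{ix}, \ldots, e^{iNx}\}$, exhibiting $H_{\overline g_\infty}$ as the norm limit of finite-rank operators, hence compact. The main obstacle is the extension of invertibility in (i) from $s = 0$ to general $s > -1/2$; items (ii) and (iii) reduce to routine bookkeeping once the right structural observations -- the similarity $U$ and truncation to trigonometric polynomials -- are identified.
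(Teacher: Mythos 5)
Your proposal is correct, but the three parts differ from the paper's arguments to varying degrees, and one step deserves a caution. For item (i), the paper does \emph{not} pass through Fredholm theory at all: it writes $g_\infty = g_+ g_-$ with $g_+(x) = e^{i\partial_x^{-1}(\Pi u)(x)}$, $g_-(x) = e^{i\partial_x^{-1}(({\rm Id}-\Pi) u)(x)}$, checks that $g_\pm, 1/g_\pm \in H^{s+1}_\pm$, and proves bijectivity directly -- $T_{\overline g_\infty} f = 0$ gives $f/g_+ = g_- h$ with $\Pi h = 0$, and a Fourier-support comparison forces $f=0$; surjectivity comes from the explicit right inverse $f_0 \mapsto g_+\Pi[g_- f_0]$. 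Your parametrix gives Fredholm of index $0$, but you still have to prove injectivity on $H^s_+$ for all $s>-1/2$, and for $-1/2<s<0$ this does \emph{not} follow from the $L^2$ statement since $H^s_+ \not\subset L^2_+$; so the phrase ``propagated from the $L^2$ case'' is misleading. The Wiener--Hopf factorization you cite is exactly the tool that works, but it must be applied directly on $H^s_+$ as in the paper -- once you have the factorization, the Fredholm scaffolding becomes redundant. For item (ii), your similarity argument $T^{(1)}_{\overline g_\infty} = U\, T_{\overline g_\infty}\, U^{-1}$ with $U$ multiplication by $e^{ix}$ is both correct and genuinely cleaner than the paper's proof, which instead identifies $T_{\overline g_\infty} g_+ = 1$ and $T_{\overline g_\infty}^{-1}1 = g_+$ and then verifies injectivity and surjectivity separately via the decomposition $H^s_{+,0} = \C g_+ \oplus T_{\overline g_\infty}^{-1}(H^s_{+,0})$; your conjugation argument is the better way to do it. For item (iii), your finite-rank truncation argument coincides with the paper's (it truncates only high positive frequencies $\sum_{k\le j}$, which works just as well); the one small correction is that for $-1/2<s\le 0$ the operator-norm bound should come from Lemma \ref{multi of functions}(i) in the $\|\cdot\|_{s+1-\tau}$ norm rather than $\|\cdot\|_{s+1}$, though the conclusion is unaffected.
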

\begin{Rem}
$(i)$ The Hankel operator $H_{\overline g_\infty}$ can be shown to have smoothing properties which imply
that it is compact -- see e.g. \cite[Appendix A]{GKT4}. However, for our purposes, such smoothing properties are not needed.\\
$(ii)$ The inverse $(T^{(1)}_{\overline g_\infty})^{-1} : H^s_{+, 0} \to H^s_{+, 0}$ of $T^{(1)}_{\overline g_\infty}$ is given for any $f \in H^s_{+,0}$ by
$$
(T^{(1)}_{\overline g_\infty})^{-1} (f)  = f_1 - \langle f_1 | 1 \rangle g_+\, ,  \qquad f_1 := g_+ \Pi [g_- f] \, , 
$$
where $g_\pm$ are defined in \eqref{def g_pm} below.
\end{Rem}
\begin{proof}
Let $u$ be in $H^s_{r,0}$ with $s > -1/2$. Then $g_\infty = e^{i \partial_x^{-1}u(x)}$ is an element in $H^{s+1}_c$, 
hence in particular it is continuous.
For notational convenience, we write $g$ instead $g_\infty$ 
and decompose it as
\begin{equation}\label{def g_pm}
g = g_+ \cdot g_-, \qquad g_+(x) := e^{ i\partial_x^{-1} \Pi u(x)},  \qquad g_-(x) := e^{ i\partial_x^{-1} ({\rm Id} - \Pi) u(x)} .
\end{equation}
Since $u$ is real valued one has
$$
\overline g_\infty = e^{- i \partial_x^{-1}u(x)} = e^{ -i\partial_x^{-1} \Pi u(x)} e^{- i\partial_x^{-1} ({\rm Id} - \Pi) u(x)} = \frac{1}{g_+} \frac{1}{g_-} .
$$
Note that  $|g_\pm (x) | =1$ for any $x \in \T$. Furthermore, using the Taylor expansion of $e^{i y}$ at $y=0$ and Lemma \ref{multi of functions}(ii), one sees that 
$g_{\pm} \in H^{s+1}_\pm$ and $\frac{1}{g_\pm} \in H^{s+1}_\pm$. \\
$(i)$ 
Since $T_{\overline g_\infty}$ is bounded, it suffices to show that it is bijective.
First let us show that $T_{\overline g_\infty}$ is one-to-one. Assume that $T_{\overline g_\infty} f = 0$ for some $f$ in $H^s_+$. 
It means that there exists $h\in H^s_c$ with $\Pi (h) = 0$
so that ${\overline g_\infty} f = h$. Multiplying both sides of the latter equation by $g_-$, yields $\frac{1}{g_+} f = g_-h $. 
Since $g_-$ is in $H^{s+1}_-$
it follows that $\Pi( g_-h ) = 0$. On the other hand, $\Pi( \frac{1}{g_+} f) = \frac{1}{g_+} f$, implying that $\frac{1}{g_+} f = 0$ and hence $f=0$.

To show that $T_{\overline g_\infty}$ is onto, let $f_0 \in H^s_+$ and define
$$
f := g_+ \Pi \big[ g_- f_0 \big] \in H^s_+ .
$$ 
We claim that $f$
is a solution of $T_{\overline g_\infty} f = f_0$. Indeed, by the definition of $f$ there exists $h \in H^s_c$ with $\Pi (h) = 0$ so that
$\frac{1}{g_+} f =  g_- f_0  + h$. Multiplying  both sides of the equation by $\frac{1}{g_-}$, one obtains
${\overline g_\infty} f = f_0 + \frac{1}{g_-} h$.  Since $\Pi(\frac{1}{g_-} h) = 0$ it then follows that 
$$
T_{\overline g_\infty} f = \Pi({\overline g_\infty} f) = \Pi f_0 + 0 = f_0 .
$$
$(ii)$ Using the Taylor expansion of $e^{iy}$ and taking into account that $\langle u | 1 \rangle = 0$,
one sees that $ \Pi g_-^{-1} = 1$ and $ \Pi g_-= 1$. It then follows from item $(i)$ that
\begin{equation}\label{80bis}
T_{\overline g_\infty} g_+ = \Pi (\frac{1}{g_-}) = 1\, , \qquad
T_{\overline g_\infty}^{-1} 1 = g_+ \Pi g_- = g_+. 
\end{equation}
Let $E^s_+ := T_{\overline g_\infty}^{-1}(H^s_{+,0})$. Since by item $(i)$, $T_{\overline g_\infty}^{-1}: H^s_{+} \to H^s_{+}$ is a linear isomorphism, any element
$f \in H^s_{+,0}$ can be written in a unique way as a linear combination of $g_+$ and an element $f_1$ of $E^s_+$, 
$f = cg_+ +  f_1$ where $c = - \langle f_1| 1 \rangle $. 

First let us show that $T^{(1)}_{\overline g_\infty}$ is one-to-one.  Assume that $f \in H^s_{+,0}$ with $T^{(1)}_{\overline g_\infty} f = 0$.
Representing $f$ as $f = - \langle f_1 | 1 \rangle g_+ + f_1$ with $f_1 \in E^s_+$,
it then follows from \eqref{80bis} that 
$T_{\overline g_\infty} f =  - \langle f_1| 1 \rangle + T_{\overline g_\infty} f_1$, implying that
$0 = T^{(1)}_{\overline g_\infty} f = T_{\overline g_\infty} f_1$. Hence by item (i), $f_1 = 0$ and in turn $f = 0$.

To see that $T^{(1)}_{\overline g_\infty}$ is onto, take $h \in H^s_{+, 0}$ and let $f_1 := T_{\overline g_\infty}^{-1} h$.
Then $f := - \langle f_1 | 1 \rangle g_+ + f_1$ is an element in $H^s_{+, 0}$, and 
$T^{(1)}_{\overline g_\infty} f = T_{\overline g_\infty} f_1 = h$. 

\noindent
$(iii)$ To prove that the Hankel operator $H_{\overline g_\infty}$ is compact, we  approximate $\overline g_\infty$ by 
$\overline g_{\infty, j} := \sum_{k \le  j} \widehat{\overline g}_\infty (k) e^{ikx}$, $j \ge 1$. One has 
 $\overline g_\infty = \lim_{j \to \infty} \overline g_{\infty, j}$ in $H^{s+1}_c$ and hence $H_{\overline g_\infty, j} \to H_{\overline g_\infty}$ in operator norm
 (cf. Lemma \ref{multi of functions}, Remark \ref{multi of functions 3}).
 Since $H_{\overline g_\infty, j}$ is an operator of rank $j+1$, it is compact and hence so is $H_{\overline g_\infty}$.
\end{proof}
We are now ready to prove Lemma \ref{Lemma 1}.
 \begin{proof}[Proof of Lemma \ref{Lemma 1}]
By Lemma \ref{A compact}, it remains to prove that for any given $u \in H^s_{r,0}$ with $s > -1/2$,
$G(u): H^s_{r,0} \to \h^{s+1/2}_{r,0}$ is a Fredholm operator of index $0$. 
For any $v \in H^s_{r,0}$, write $v = \Pi v +({\rm Id} - \Pi) v$. Hence by the definition \eqref{75bis} of $G(u)[v]$, 
one has
$$
G(u)[v] =  \mathcal F^+_{1/2} (\Pi [ \overline g_\infty \Pi v])
+ \mathcal F^+_{1/2} (\Pi [ \overline g_\infty ({\rm Id} - \Pi) v ])
$$
where $\mathcal F^+_{1/2}$ denotes the weighted partial Fourier transform
$$
\mathcal F^+_{1/2} : H^s_c \to \h^{s+1/2}_+ , \, f \mapsto \big( \frac{1}{\1 n \2^{1/2}} \widehat f(n)  \big)_{n \ge 1} .
$$
In view of the definitions \eqref{Toeplitz/Hankel} and \eqref{Toeplitz restricted}, $G(u)[v]$ can be expressed in terms of 
the restriction $T^{(1)}_{\overline g_\infty}$ of the Toeplitz operator $T_{\overline g_\infty}$ and the Hankel operator $H_{\overline g_\infty}$,
$$
G(u)[v]= \mathcal F^+_{1/2} (T^{(1)}_{\overline g_\infty}[\Pi v])  + \mathcal F^+_{1/2} (H_{ \overline g_\infty}[({\rm Id} - \Pi) v]) .
$$
By Lemma \ref{Fredholm}(ii), $T^{(1)}_{\overline g_\infty} : H^s_{+,0} \to H^s_{+,0} $ is a linear isomorphism and hence so is
$$
H^s_{r,0} \to \mathfrak h^{s+1/2}_+, \, v \mapsto  \mathcal F^+_{1/2} (T^{(1)}_{\overline g_\infty}[\Pi v]).
$$
Since by Lemma \ref{Fredholm}(iii),
$$
H^s_{r,0} \to \mathfrak h^{s+1/2}_+, \, v \mapsto  \mathcal F^+_{1/2} (H_{ \overline g_\infty}[ ({\rm Id} -\Pi) v]) 
$$
is compact, we then conclude that $G(u) : H^s_{r,0} \to \mathfrak h^{s+1/2}_+$ is a Fredholm operator of index $0$.
\end{proof}
All the pieces of the proof of Theorem \ref{th:Phi} are now in place and it remains to put them together.
\begin{proof}[Proof of Theorem \ref{th:Phi}]
Let $s > -1/2$.
 By \cite[Theorem 1.1]{GKT3} (near $0$) and  Proposition \ref{Phi analytic} (away from $0$), 
 the Birkhoff map (cf. \eqref{eq:Phi_n})
$$
\Phi: H^s_{r,0} \to \h^{s+1/2}_{r,0}, \, u \mapsto \big(\big(\overline{\Phi_{-n}(\overline u)}\big)_{n\le -1},\big(\Phi_n(u)\big)_{n\ge 1}\big)
$$ 
is real analytic.
By Proposition \ref{loc diffeo}, $\Phi : H^s_{r,0} \to \h^{s+1/2}_{r,0}$ is a local diffeomorphism. 
Finally,
by \cite[Theorem 1.1]{GK} ($s=0$) and \cite[Theorem 6, Proposition 5 ]{GKT1} ($s > -1/2$),  
$\Phi : H^s_{r,0} \to \h^{s+1/2}_{r,0}$ is bijective.
\end{proof}
To prove Theorem \ref{th:well-posedness}, we follow the strategy  developed in \cite[Section 6]{GKT3}
for showing the results of Theorem \ref{th:well-posedness} near $0$.
 Recall that  the Benjamin-Ono equation is globally well-posed in $H^s_{r,0}$ for any 
$s > -1/2$ (cf. \cite{GKT1} for details and references). For any given $t \in \mathbb R$,  denote by $\mathcal S^t_0$ the 
flow map of the Benjamin-Ono equation on $H^s_{r,0}$, $\mathcal S^t_0 : H^s_{r,0} \to H^s_{r,0}$ and by 
$\mathcal S^t_{B}: \h^{s+1/2}_+ \to \h^{s+1/2}_+$ the version of $\mathcal S^t_0$ obtained, when expressed in 
the Birkhoff coordinates $(\zeta_n)_{n \ge 1}$. 
To describe $\mathcal S^t_{B}$ more explicitly, recall that the nth frequency $\omega_n$, $n \ge 1$, of the Benjamin-Ono 
equation is the real valued, affine function defined on $\h^{s+1/2}_+$ (cf. \cite{GK}, \cite{GKT1}),
\begin{equation}\label{eq:formula_nth_frequency}
\omega_n(\zeta) = n^2 -2\sum_{k=1}^{n} k |\zeta_k|^2
- 2n\sum_{k=n+1}^{\infty} |\zeta_k|^2\, .
\end{equation}
For any initial data  $\zeta(0) \in \h^{s+ 1/2}_+$ with $ s > -1/2$, $\mathcal S^t_B(\zeta(0))$ is given by
\begin{equation}\label{formula for S_B}
\mathcal S^t_B(\zeta(0)):= \big( \zeta_n(0)e^{it\omega_n(\zeta(0))} \big)_{n \ge 1} \, .
\end{equation}
The key ingredient for the proof of Theorem \ref{th:well-posedness}(i)
is the following result, established in \cite[Lemma 6.1]{GKT3}.
\begin{Lem}\label{lem:nowhere locally uniformly continuous}
For any $t \ne 0$ and any $ - 1/2 < s < 0$, 
$\mathcal S^t_B : \h^{s+ 1/2}_+ \to \h^{s+1/2}_+$
is nowhere locally uniformly continuous. In particular,
it is {\em not} locally Lipschitz.
\end{Lem}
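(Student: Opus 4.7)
The plan is to show that on any open subset $V\subset\h^{s+1/2}_+$, $\mathcal S^t_B$ is not uniformly continuous. Given any $\zeta^*\in V$ and radius $r>0$ with $B(\zeta^*,r)\subset V$, I will produce sequences $\zeta^{(N)},\tilde\zeta^{(N)}\in B(\zeta^*,r)$ whose $\h^{s+1/2}$-distance tends to $0$ but whose $\mathcal S^t_B$-images stay at distance bounded below by a positive constant. The key point is a scaling mismatch visible in \eqref{eq:formula_nth_frequency} and \eqref{formula for S_B}: coordinate $N$ has weight $N^{2s+1}$ in the $\h^{s+1/2}$-norm, whereas the coefficient of $|\zeta_N|^2$ inside $\omega_N$ is $-2N$. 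Since $-1/2<s<0$, a perturbation of coordinate $N$ of size $\sim N^{-(s+1/2)}$ has bounded $\h^{s+1/2}$-norm, yet modifies $\omega_N$ by an amount of order $N\cdot N^{-(2s+1)}=N^{-2s}\to\infty$.

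Concretely, fix a real $c\in(0,r/2)$ and let $\zeta^{(N)}$ and $\tilde\zeta^{(N)}$ agree with $\zeta^*$ in every coordinate except the $N$th, where
\begin{equation*}
\zeta^{(N)}_N:=\zeta^*_N+a_N,\qquad \tilde\zeta^{(N)}_N:=\zeta^*_N+a_N+h_N,\qquad a_N:=\frac{c}{N^{s+1/2}},
\end{equation*}
with $h_N\in\R$ to be chosen. Since the two sequences differ only at coordinate $N$, formula \eqref{eq:formula_nth_frequency} gives
\begin{equation*}
\omega_N(\tilde\zeta^{(N)})-\omega_N(\zeta^{(N)})=-2N\bigl(|\zeta^*_N+a_N+h_N|^2-|\zeta^*_N+a_N|^2\bigr)=-4N h_N a_N+R_N,
\end{equation*}
with remainder $R_N:=-4N h_N\re(\zeta^*_N)-2N h_N^2$. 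I now pick $h_N:=-\pi/(4t N a_N)=-\pi N^{s-1/2}/(4tc)$, which makes $-4tNh_Na_N=\pi$. To confirm $tR_N\to 0$: first $2tNh_N^2=O(N^{2s})\to 0$; second $4tNh_N\re(\zeta^*_N)=O(N^{s+1/2}|\zeta^*_N|)\to 0$ because $\zeta^*\in\h^{s+1/2}_+$ forces the tail bound $N^{s+1/2}|\zeta^*_N|\to 0$. Hence $|e^{it\omega_N(\tilde\zeta^{(N)})}-e^{it\omega_N(\zeta^{(N)})}|=|e^{i(\pi+o(1))}-1|=2+o(1)$.

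The three relevant sizes then align. The input: $\|\zeta^{(N)}-\tilde\zeta^{(N)}\|_{s+1/2}=N^{s+1/2}|h_N|=\pi N^{2s}/(4|t|c)\to 0$ since $s<0$. Both sequences lie in $B(\zeta^*,r)$ because $\|\zeta^{(N)}-\zeta^*\|_{s+1/2}=c<r$ and $\|\tilde\zeta^{(N)}-\zeta^*\|_{s+1/2}\le c+N^{s+1/2}|h_N|=c+o(1)<r$ for large $N$. For the output, the $N$th coordinate alone contributes
\begin{equation*}
N^{s+1/2}\bigl|(\zeta^*_N+a_N)(e^{it\omega_N(\zeta^{(N)})}-e^{it\omega_N(\tilde\zeta^{(N)})})-h_N e^{it\omega_N(\tilde\zeta^{(N)})}\bigr|\ge (2+o(1))(c-N^{s+1/2}|\zeta^*_N|)-N^{s+1/2}|h_N|,
\end{equation*}
which tends to $2c$. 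Consequently $\|\mathcal S^t_B \zeta^{(N)}-\mathcal S^t_B \tilde\zeta^{(N)}\|_{s+1/2}\ge c$ for all sufficiently large $N$, contradicting uniform continuity on $V$ and, a fortiori, local Lipschitz continuity.

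The delicate point is that the remainder $R_N$ must be shown to vanish uniformly across the class of admissible $\zeta^*$: the term $4tNh_N\re(\zeta^*_N)$ is controlled not by any pointwise decay of $\zeta^*_N$ — which may be arbitrarily slow — but by the global summability encapsulated in the tail bound $N^{s+1/2}|\zeta^*_N|\to 0$. The perturbation of coordinate $N$ also affects $\omega_n$ for $n\neq N$, but those contributions only add to the output norm in absolute value, and therefore can only strengthen the lower bound obtained from coordinate $N$.
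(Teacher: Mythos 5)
Your proof is correct. The paper does not reproduce the argument itself — it invokes \cite[Lemma 6.1]{GKT3} — but the mechanism you exploit (the scaling mismatch between the $\h^{s+1/2}$-weight $N^{s+1/2}$ on the $N$th coordinate and the coefficient $-2N$ of $|\zeta_N|^2$ inside $\omega_N$, which for $-1/2<s<0$ lets a vanishing perturbation of coordinate $N$ produce a bounded-away-from-$2\pi\Z$ phase shift $t\,\Delta\omega_N$) is precisely the standard argument for this kind of nowhere-local-uniform-continuity statement and is almost certainly what the cited lemma does. The computational details all check out: the choice $h_N=-\pi/(4tNa_N)$ forces $-4tNh_Na_N=\pi$; the remainder $tR_N=-4tNh_N\re(\zeta^*_N)-2tNh_N^2$ vanishes because $N^{2s}\to 0$ and $N^{s+1/2}|\zeta^*_N|\to 0$ for $\zeta^*\in\h^{s+1/2}_+$; both perturbed points stay in $B(\zeta^*,r)$; the input separation $\|\zeta^{(N)}-\tilde\zeta^{(N)}\|_{s+1/2}=\pi N^{2s}/(4|t|c)$ tends to $0$; and the $N$th-coordinate contribution to the output separation tends to $2c>0$, while the effect of the perturbation on $\omega_n$ for $n\ne N$ can only increase the $\ell^2$-norm of the difference, so the single-coordinate lower bound suffices. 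No gap.
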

\begin{proof}[Proof of Theorem \ref{th:well-posedness}(i)]
The claimed result follows directly from Lemma \ref{lem:nowhere locally uniformly continuous} and 
Theorem \ref{th:Phi}. 
\end{proof}
To outline the proof of Theorem \ref{th:well-posedness}(ii), we first make some preliminary considerations. 
Denote by $\ell^\infty_{c,0}$ the space $\ell^\infty(\Z \setminus\{0\}, \C)$ 
of bounded complex valued sequences $z = (z_n)_{n \ne 0}$, endowed with the supremum norm. 
It is a subspace of $\ell^\infty_c$ and a Banach algebra with respect to the multiplication, 
\begin{equation}\label{eq:l^infty-Banach_algebra}
\ell^\infty_{c,0}\times\ell^\infty_{c,0}\to\ell^\infty_{c,0},\quad(z,w)\mapsto z\bcdot w:=(z_n w_n)_{n\ne 0}.
\end{equation}
Similarly, for any $s\in\R$, the bilinear map
\begin{equation}\label{eq:h*l^infty-multiplication}
\h^s_{c,0}\times\ell^\infty_{c,0}\to\h^s_{c,0},\quad(z,w)\mapsto z\bcdot w,
\end{equation}
is bounded. For any given $T>0$ and $s\in\R$, introduce the complex Banach spaces
\[
\mathcal{C}_{T, s}:=C\big([-T,T],\h^s_{c,0}\big)\quad {\rm and}\quad
\mathcal{C}_{T, \infty} :=C\big([-T,T],\ell^\infty_{c,0}\big),
\] 
endowed with the supremum norm.
Elements of these spaces are denoted by $\xi$, or more explicitly, $\xi(t) = (\xi_n(t))_{n \ne 0}$.
The multiplication in \eqref{eq:l^infty-Banach_algebra} and \eqref{eq:h*l^infty-multiplication} induces
in a natural way the following bounded, bilinear maps (cf. \cite[Section 6]{GKT3})
\begin{equation}\label{eq:multiplication_on_curves}
\mathcal{C}_{T, \infty} \times\mathcal{C}_{T, \infty} \to\mathcal{C}_{T, \infty},\qquad
\mathcal{C}_{T, s} \times\mathcal{C}_{T, \infty}\to\mathcal{C}_{T, s},  \qquad
\mathcal{C}_{T,s} \times \ell^\infty_{c,0}\to\mathcal{C}_{T,s}.
\end{equation}
The following lemma can be shown in a straightforward way.

\begin{Lem}\label{lem:exp-analytic}
The map 
$\mathcal{C}_{T,\infty} \to \mathcal{C}_{T,\infty}$, 
$\xi \mapsto 
e^\xi:=\big(e^{\xi_n}\big)_{n\ne 0}$,
is analytic.
\end{Lem}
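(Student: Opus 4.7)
The plan is to exploit the fact that $\mathcal{C}_{T,\infty}$ is a commutative unital Banach algebra under the pointwise-in-$t$, componentwise-in-$n$ multiplication, and to represent the exponential as a globally convergent power series in that algebra.

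First, I would verify that setting $(\xi\bcdot\eta)(t):=\xi(t)\bcdot\eta(t)$ turns $\mathcal{C}_{T,\infty}$ into a commutative Banach algebra with unit $\mathbf{1}(t):=(1)_{n\ne 0}$. The required estimate $\|\xi\bcdot\eta\|_{\mathcal{C}_{T,\infty}}\le\|\xi\|_{\mathcal{C}_{T,\infty}}\|\eta\|_{\mathcal{C}_{T,\infty}}$ is an immediate consequence of the corresponding Banach algebra inequality in $\ell^\infty_{c,0}$ (cf. \eqref{eq:l^infty-Banach_algebra}) combined with the definition of the supremum norm on $\mathcal{C}_{T,\infty}$; commutativity, associativity, and the unit axiom are inherited pointwise from $\ell^\infty_{c,0}$. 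Continuity in $t$ of the product of two continuous curves is also standard.

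With this structure in hand, I would introduce the formal series
$$E(\xi):=\sum_{k=0}^\infty \frac{1}{k!}\,\xi^{\bcdot k},\qquad \xi^{\bcdot 0}:=\mathbf{1},\quad \xi^{\bcdot(k+1)}:=\xi\bcdot\xi^{\bcdot k}.$$
The Banach algebra inequality yields $\|\xi^{\bcdot k}\|_{\mathcal{C}_{T,\infty}}\le\|\xi\|_{\mathcal{C}_{T,\infty}}^k$, so the series is dominated termwise by $\sum_k \|\xi\|_{\mathcal{C}_{T,\infty}}^k/k!$ and converges absolutely in $\mathcal{C}_{T,\infty}$, uniformly on bounded subsets. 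Since the $k$-th term is a continuous $k$-homogeneous polynomial (it is the diagonal restriction of the bounded symmetric $k$-linear map $(\xi_1,\dots,\xi_k)\mapsto \xi_1\bcdot\cdots\bcdot\xi_k$ on $\mathcal{C}_{T,\infty}^k$), the sum $E:\mathcal{C}_{T,\infty}\to\mathcal{C}_{T,\infty}$ is analytic in the sense used throughout the paper.

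Finally, I would identify $E(\xi)$ with $e^\xi$ by evaluating componentwise. For fixed $t\in[-T,T]$ and $n\ne 0$, the Banach-space convergence of the series in $\mathcal{C}_{T,\infty}$ implies in particular pointwise convergence of the $n$-th coordinate, so $(E(\xi))_n(t)=\sum_{k\ge 0}\xi_n(t)^k/k!=e^{\xi_n(t)}$, which is exactly the $n$-th component of $e^\xi$ at time $t$. Hence $E=\exp$, as desired. No substantive obstacle arises: once the Banach algebra structure on $\mathcal{C}_{T,\infty}$ is recorded, the lemma reduces to the well-known analyticity of the exponential in a unital Banach algebra.
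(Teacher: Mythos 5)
Your proof is correct and is exactly the ``straightforward'' argument the paper alludes to but does not spell out: once one records that $\mathcal{C}_{T,\infty}$ is a commutative unital Banach algebra for the pointwise product induced by \eqref{eq:l^infty-Banach_algebra}, the map $\xi\mapsto e^{\xi}$ is the entire power series $\sum_{k\ge 0}\xi^{\bcdot k}/k!$, whose terms are continuous homogeneous polynomials and which converges absolutely and uniformly on bounded sets, so the sum is analytic, and componentwise evaluation identifies it with $(e^{\xi_n})_{n\ne 0}$. No gaps; this matches the intended reasoning.
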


For any $s>-1/2$ and any $n\ge 1$, the $n$th frequency \eqref{eq:formula_nth_frequency} of the
Benjamin-Ono equation extends from $\h_{r,0}^{s+ 1/2}$ to 
an analytic function on $\h_{c,0}^{s + 1/2}$ given by
\begin{equation}\label{eq:omega_n}
\omega_n(\zeta) = n^2+\Omega_n(\zeta),
\end{equation}
where 
\begin{equation}\label{eq:Omega_n}
\Omega_n(\zeta):=-2\sum_{k=1}^{n} k\,\zeta_{-k}\zeta_k
- 2n\sum_{k=n+1}^\infty\zeta_{-k}\zeta_k,\qquad\zeta = (\zeta_k)_{k \ne 0} \in\h_{c,0}^{s+1/2}.
\end{equation}
For $n\le-1$ we set 
\begin{equation}\label{eq:Omega_{-n}}
\Omega_n(\zeta):=-\Omega_{-n}(\zeta),\qquad\zeta\in\h_{c,0}^{s+1/2}.
\end{equation}
In \cite[Lemma 6.3]{GKT3} we prove the following

\begin{Lem}\label{eq:Omega-analytic}
For any $s\ge 0$ the map
\begin{equation}\label{eq:Omega-map}
\Omega : \h_{c,0}^{s+1/2}\to\ell^\infty_{c,0},\quad\zeta\mapsto\big(\Omega_n(\zeta)\big)_{n\ne 0},
\end{equation}
is analytic.
\end{Lem}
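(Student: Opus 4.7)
The plan is to recognize $\Omega_n$ as a continuous bilinear form in $\zeta$ (once we let one slot absorb the negative-index part and the other the positive-index part), and then show that the family $(\Omega_n)_{n \ne 0}$, viewed collectively, is bounded bilinear into $\ell^\infty_{c,0}$. By \eqref{eq:Omega_{-n}} it suffices to consider $n \ge 1$, so I would introduce the bilinear form
\[
B_n(\zeta,\eta) := -2\sum_{k=1}^{n} k\,\zeta_{-k}\eta_k - 2n\sum_{k=n+1}^{\infty}\zeta_{-k}\eta_k, \qquad n \ge 1,
\]
so that $\Omega_n(\zeta) = B_n(\zeta,\zeta)$. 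Once I show that $B : \h^{s+1/2}_{c,0}\times \h^{s+1/2}_{c,0} \to \ell^\infty_{c,0}$, $(\zeta,\eta)\mapsto (B_n(\zeta,\eta))_{n\ne 0}$ (extended by the antisymmetry \eqref{eq:Omega_{-n}} to $n\le -1$) is a bounded bilinear map, analyticity of $\Omega$ follows immediately: bounded bilinear maps between Banach spaces are analytic, and $\Omega$ is the composition of $B$ with the diagonal $\zeta\mapsto(\zeta,\zeta)$.

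The main step is therefore the uniform-in-$n$ estimate $\sup_{n\ge 1} |B_n(\zeta,\eta)| \le C\|\zeta\|_{s+1/2}\|\eta\|_{s+1/2}$. Since $\h^{s+1/2}_{c,0}\hookrightarrow \h^{1/2}_{c,0}$ for $s\ge 0$, it suffices to prove this with $\|\cdot\|_{1/2}$ on the right. For the first sum I would use Cauchy--Schwarz directly:
\[
\Bigl|\sum_{k=1}^{n} k\,\zeta_{-k}\eta_k\Bigr|
\le \Bigl(\sum_{k\ge 1} k\,|\zeta_{-k}|^2\Bigr)^{1/2}\Bigl(\sum_{k\ge 1} k\,|\eta_k|^2\Bigr)^{1/2}
\le \|\zeta\|_{1/2}\|\eta\|_{1/2}.
\]
For the tail term, the apparent growth factor $n$ is absorbed by the weight $k>n$: applying Cauchy--Schwarz and the elementary inequality $|a_k|^2\le (k/n)|a_k|^2$ for $k>n$ gives
\[
n\sum_{k>n}|\zeta_{-k}||\eta_k|
\le n\Bigl(\tfrac{1}{n}\sum_{k>n}k|\zeta_{-k}|^2\Bigr)^{1/2}\Bigl(\tfrac{1}{n}\sum_{k>n}k|\eta_k|^2\Bigr)^{1/2}
\le \|\zeta\|_{1/2}\|\eta\|_{1/2}.
\]
Combining the two estimates yields $|B_n(\zeta,\eta)|\le 4\|\zeta\|_{1/2}\|\eta\|_{1/2}$ for every $n\ge 1$, uniformly in $n$.

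The only subtlety I anticipate --- and it is really the content of the argument --- is precisely the handling of that $2n$ prefactor in the tail, which at first glance looks dangerous. Once one sees that the tail $\sum_{k>n}|\zeta_{-k}||\eta_k|$ gains a factor $1/n$ from the Hardy-type weight (since the relevant Sobolev weight $k$ exceeds $n$ on that range), the estimate is effortless. With the bound in hand, boundedness of $B$ into $\ell^\infty_{c,0}$ is clear, extension to $n\le -1$ is immediate from \eqref{eq:Omega_{-n}} (which shows $B_n(\zeta,\eta):=-B_{-n}(\zeta,\eta)$ for $n\le -1$ satisfies the same bound), and the analyticity statement of the lemma follows by the bilinear-map-is-analytic principle.
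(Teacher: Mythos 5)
Your argument is correct and complete: polarizing $\Omega_n$ to a complex bilinear form $B_n$ and bounding $\sup_{n\ge 1}|B_n(\zeta,\eta)|$ via Cauchy--Schwarz --- with the factor $n$ in the tail absorbed by the weight $k>n$ via $1\le k/n$ --- yields a bounded bilinear map $\h^{s+1/2}_{c,0}\times\h^{s+1/2}_{c,0}\to\ell^\infty_{c,0}$, whose restriction to the diagonal is a continuous homogeneous polynomial and hence analytic, with the case $n\le -1$ following from \eqref{eq:Omega_{-n}}. The paper itself defers the proof to \cite[Lemma 6.3]{GKT3}, so no line-by-line comparison is available here, but the estimate you give is the natural one and there is no gap.
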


We now consider the curves $\xi^{(1)},\xi^{(2)}\in\mathcal{C}_{T,\infty}$, defined by
\begin{equation}\label{eq:gamma1}
\xi^{(1)} : [-T,T]\to\ell^\infty_{c,0},\qquad \xi_n^{(1)}(t):= e^{i\sign(n)\,n^2 t} , \ \ n\ne 0,
\end{equation}
and, respectively, 
\begin{equation}\label{eq:gamma2}
\xi^{(2)} : [-T,T]\to\ell^\infty_{c,0},\qquad \xi_n^{(2)}(t):=t , \ \ n\ne 0.
\end{equation}
The following corollary, proved in \cite[Section 6]{GKT3}, follows from Lemma \ref{lem:exp-analytic}, Lemma \ref{eq:Omega-analytic}, 
and the boundedness of the bilinear maps in \eqref{eq:multiplication_on_curves}.

\begin{Coro}\label{coro:analyticity_of_S_B}
For any $s\ge 0$, the map (cf. \eqref{formula for S_B})
\begin{equation}\label{eq:S_{B,T}}
\mathcal{S}_{B,T} : \h^{s+1/2}_{c,0}\to\mathcal{C}_{T, s+1/2},\quad
\zeta\mapsto\zeta\bcdot \xi^{(1)}\bcdot e^{i\,\Omega(\zeta)\bcdot\xi^{(2)}},
\end{equation}
is analytic.
\end{Coro}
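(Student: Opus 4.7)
The plan is to decompose $\mathcal{S}_{B,T}$ as a composition of analytic maps and invoke the standard facts that bounded linear and bounded multilinear maps between Banach spaces are analytic, and that compositions of analytic maps are analytic. All the non-trivial analytic building blocks are already available: Lemma \ref{eq:Omega-analytic} gives the analyticity of $\Omega$, Lemma \ref{lem:exp-analytic} gives the analyticity of the entry-wise exponential, and \eqref{eq:multiplication_on_curves} gives the boundedness of the relevant bilinear multiplications on curve spaces.

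First I would introduce the constant-curve embedding
\[
\iota_{T,s+1/2} : \h^{s+1/2}_{c,0} \to \mathcal{C}_{T,s+1/2}, \quad \zeta \mapsto \bigl(t \mapsto \zeta\bigr),
\]
which is bounded linear, hence analytic. Similarly, the map $\ell^\infty_{c,0} \to \mathcal{C}_{T,\infty}$, $z \mapsto z \bcdot \xi^{(2)}$, is bounded linear (with norm at most $T$), hence analytic. Composing the latter with $\Omega : \h^{s+1/2}_{c,0} \to \ell^\infty_{c,0}$, which is analytic by Lemma \ref{eq:Omega-analytic} (since $s \ge 0$), and multiplying by the scalar $i$, yields that the map
\[
\h^{s+1/2}_{c,0} \to \mathcal{C}_{T,\infty}, \qquad \zeta \mapsto i\, \Omega(\zeta) \bcdot \xi^{(2)},
\]
is analytic. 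Applying Lemma \ref{lem:exp-analytic}, it follows that $\zeta \mapsto e^{i\,\Omega(\zeta)\bcdot \xi^{(2)}}$ is analytic as a map $\h^{s+1/2}_{c,0} \to \mathcal{C}_{T,\infty}$.

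Finally, the expression $\zeta \bcdot \xi^{(1)} \bcdot e^{i\,\Omega(\zeta)\bcdot \xi^{(2)}}$ is assembled from two bounded bilinear multiplications of the list \eqref{eq:multiplication_on_curves}. The first is $\mathcal{C}_{T,\infty} \times \mathcal{C}_{T,\infty} \to \mathcal{C}_{T,\infty}$ applied to the fixed curve $\xi^{(1)}$ and to $e^{i\,\Omega(\zeta)\bcdot \xi^{(2)}}$, which produces an analytic map $\h^{s+1/2}_{c,0} \to \mathcal{C}_{T,\infty}$. The second is $\mathcal{C}_{T,s+1/2} \times \mathcal{C}_{T,\infty} \to \mathcal{C}_{T,s+1/2}$ applied to $\iota_{T,s+1/2}(\zeta)$ and the preceding curve; being bounded bilinear, this is analytic. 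The composition is $\mathcal{S}_{B,T}$, which is therefore analytic. There is no real obstacle in the argument; it is a bookkeeping exercise of assembling the already-established analytic building blocks, the only point that requires care being that one must pass through the constant-curve embedding in order to pair the vector-valued factor $\zeta$ with the $\mathcal{C}_{T,\infty}$-valued exponential factor via the multiplication $\mathcal{C}_{T,s+1/2} \times \mathcal{C}_{T,\infty} \to \mathcal{C}_{T,s+1/2}$.
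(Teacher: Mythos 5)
Your proof is correct and follows exactly the route the paper indicates: it assembles $\mathcal{S}_{B,T}$ from Lemma \ref{eq:Omega-analytic}, Lemma \ref{lem:exp-analytic}, and the bounded bilinear multiplications in \eqref{eq:multiplication_on_curves}, with the constant-curve embedding being the only bookkeeping device needed to fit the pieces together. This is the same argument the paper alludes to (and the one carried out in \cite[Section 6]{GKT3}).
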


\smallskip

\begin{proof}[Proof of Theorem \ref{th:well-posedness}(ii)] 
Let $s \ge 0$, $T>0$, and $w \in H^s_{r,0}$ and define $\xi:= \Phi(w) \in  \h^{s+1/2}_{r,0}$.
%
Since the curve $t \mapsto \mathcal S^t_B(\xi) \in \h^{s+1/2}_{r,0}$ is continuous, 
$$
K:= \{ S_B^t(\Phi(w)) \, | \,  |t| \le T \}
$$ 
is compact in $\h^{s+1/2}_{r,0}$.
Choose an open  neighborhood $V$ of $K$  in $\h^{s+1/2}_{c,0}$ so that 
$\Phi^{-1}$ extends to an analytic map $V \to H^s_{c,0}$ and has the property that
$\Phi^{-1} : V \to U:= \Phi^{-1}(V)$ is a bianalytic diffeomorphism.
Since $K$ is compact there exists $\varepsilon > 0$ so that for any $\zeta \in K$
the ball $\{ \zeta' \in \h^{s+1/2}_{c,0} | \ \| \zeta' - \zeta \|_{s+1/2} < \varepsilon \}$ is contained in $V$.
By Corollary \ref{coro:analyticity_of_S_B}, there exists a neighborhood $V_\xi$ of $\xi$ in $ \h^{s+1/2}_{c,0}$
so that for any $\zeta \in V_\xi$ and any $t \in [-T, T]$, $\| \mathcal S_B^t(\zeta) - \mathcal S_B^t(\xi) \|_{s+1/2} < \varepsilon$
and hence $\mathcal S_B^t(\zeta) \in V$. By shrinking $V_\xi$, if needed, we can assume that  $V_\xi \subseteq V$.
Hence $U_w := \Phi^{-1}(V_\xi)$ is well-defined and an open neighborhood of $w$ in $H^s_{c,0}$.
Furthermore, for any $u \in U_w$ and $|t| \le T$, $( \Phi^{-1}\circ S_{B,T}(t)\circ\Phi)(u)$ is well-defined.
The claimed result then follows from Corollary \ref{coro:analyticity_of_S_B}, Theorem \ref{th:Phi},
and from Lemma \ref{lem:push-forward} below.
\end{proof}
For the convenience of the reader, we record the following lemma of \cite[Section 6]{GKT3}, used in the proof of Theorem \ref{th:well-posedness}(ii) .
To state it, we first need to introduce some more notation.
Let $X$ be a complex Banach space with norm $\| \cdot \| \equiv \|\cdot \|_X$. For any $T > 0$, denote by $C\big([-T,T],X\big)$
the Banach space of continuous functions $x: [-T, T] \to X$, endowed with the supremum norm $\|x\|_{T, X} := \sup_{t \in [-T, T]} \| x(t) \|$. 
Furthermore, for any open neighborhood $U$ of $X$, denote by 
$C\big([-T,T],U\big)$ the subset of $C\big([-T,T], X\big)$, consisting of continuous functions $[-T, T ] \to X$ with values in $U$. 
The following lemma can be shown in a straightforward way. For the convenience of the reader it is proved in \cite[Section 6]{GKT3}.
\begin{Lem}\label{lem:push-forward}
Let $f : U\to Y$ be an analytic map from an open neighborhood  $U$ in $X$ where 
$X$ and $Y$ are complex Banach spaces. Then for any $T > 0$, the associated pushforward map
\[
f_* : C\big([-T,T],U\big)\to C\big([-T,T],Y\big),\quad [ t \mapsto x(t)]  \mapsto\big[t\mapsto f(x(t))\big], 
\]
is analytic. 
\end{Lem}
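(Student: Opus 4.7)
The plan is to work locally around any given curve $x_0 \in C([-T,T],U)$ and show that $f_*$ admits a convergent Taylor expansion there. Since $[-T,T]$ is compact and $x_0$ is continuous, $K := x_0([-T,T])$ is a compact subset of $U$. The first step will be to invoke the standard Cauchy estimates for Banach space analytic maps: because $f$ is analytic on $U$ and $K \subset U$ is compact, there exist $\delta > 0$ and $M > 0$ such that for every $x \in K$ the closed ball of radius $\delta$ around $x$ is contained in $U$, and $f$ admits an expansion
\[
f(x+h) = \sum_{n \ge 0} P_n(x)[h,\dots,h]
\]
converging uniformly for $\|h\|_X \le \delta$, where $P_n(x)$ is a bounded symmetric $n$-multilinear form from $X^n$ to $Y$ satisfying $\|P_n(x)\| \le M/\delta^n$, and where the assignment $x \mapsto P_n(x)$ is itself continuous on $U$ (analytic, in fact).

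Next, for $y \in C([-T,T],X)$ with $\|y\|_{T,X} < \delta$, I would define candidate Taylor coefficients of $f_*$ at $x_0$ by
\[
\widetilde P_n(y_1,\dots,y_n)(t) := P_n\bigl(x_0(t)\bigr)\bigl[y_1(t),\dots,y_n(t)\bigr], \qquad t \in [-T,T].
\]
Continuity of $t \mapsto \widetilde P_n(y_1,\dots,y_n)(t)$ as an element of $Y$ follows from continuity of $x_0$, of $P_n$, and of the multilinear evaluation map, so $\widetilde P_n$ sends $C([-T,T],X)^n$ into $C([-T,T],Y)$, and the uniform Cauchy bound gives $\|\widetilde P_n\| \le M/\delta^n$. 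Since the partial sums of $\sum_{n\ge 0} P_n(x_0(t))[y(t)^n]$ converge to $f(x_0(t)+y(t))$ pointwise in $t$, and the convergence is uniform in $t$ by the geometric majorant $\sum_n M(\|y\|_{T,X}/\delta)^n$, one concludes that
\[
f_*(x_0 + y) = \sum_{n \ge 0} \widetilde P_n(y,\dots,y)
\]
in $C([-T,T],Y)$ for $\|y\|_{T,X} < \delta$. This is exactly a convergent power series representation of $f_*$ around the arbitrary point $x_0$, which is the definition of analyticity.

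The only nontrivial input is the existence of uniform Cauchy bounds for the Taylor coefficients of $f$ on a neighborhood of the compact set $K$; once this is established, everything else is bookkeeping. This uniformity is a standard fact for analytic maps between complex Banach spaces, obtained by covering $K$ with finitely many open balls on which $f$ is represented by a convergent power series and extracting a common radius $\delta$ and common bound $M$ via compactness. With that in hand, the argument reduces to recognizing that the multilinear forms $\widetilde P_n$ inherit both the required continuity into $C([-T,T],Y)$ and the summable norm estimate from their pointwise counterparts $P_n(x_0(t))$.
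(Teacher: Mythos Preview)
Your argument is correct and is precisely the standard one: use compactness of the trace $x_0([-T,T])$ to extract uniform Cauchy bounds for the Taylor polynomials of $f$, lift these pointwise to bounded multilinear maps on $C([-T,T],X)$, and conclude via the geometric majorant. The paper itself does not give a proof of this lemma; it simply remarks that the result ``can be shown in a straightforward way'' and refers the reader to \cite[Section~6]{GKT3} for details. What you have written is exactly the straightforward argument the authors have in mind.
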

The proof of the Addendum to Theorem \ref{th:well-posedness} (ii), stated in Section \ref{Introduction},
is proved as its local version in \cite{GKT3}.



\section{Analyticity of the delta map}\label{sec:the_delta_map}
The goal of this section is to prove Proposition \ref{prop:delta_n-analyticity} of Section \ref{sec:the_Birkhof_map},
saying that for any $w \in H^s_{r,0}$ with $s > -1/2$, the map
$$
\delta : U^\sigma_w \to \ell^1_{> n_2},\quad u\mapsto (\delta_n(u))_{n > n_2} ,
$$
is analytic where $\delta_n(u)$ is given by \eqref{64bis}.
We recall that $U^\sigma \equiv U^\sigma_w$,  $\sigma = \min \{s, 0\}$, is the neighborhood of $w$ in $H^\sigma_{c,0}$, 
satisfying the properties  {\em (NBH1)} - {\em (NBH2)}, introduced in Section \ref{sec.normalized eigenfunctions},
 and that $n_2 \ge n_0 > N$ is given by Lemma \ref{estim1 kappa+mu}.
Note that without loss of generality, we can assume that $-1/2 < s \le 0$, implying that $\sigma = s$
and hence throughout this section we assume that $-1/2 < s \le 0$.

The proof of  Proposition \ref{prop:delta_n-analyticity} is based on a vanishing lemma, which has been established in \cite{GKT3}.
We already know (cf. Section \ref{sec:the_Birkhof_map}) that each component of $\delta$,
\begin{equation}\label{94bis}
\delta_n(u) = \1 P_n(u) \big( SP_{n-1}(u)f_{n-1} - f_n \big) \,  | f_n \2 \in \C, \qquad n > n_2,
\end{equation}
is analytic on $U^s$, where $f_n \equiv f_n(w_N)$. Hence it suffices to show that there exists $n_3 \ge n_2$ so that
$$
\sum_{n > n_3} |\delta_n(u)| < \infty
$$ 
locally uniformly for $u$ in $U^s$. 
First we make some preliminary considerations. Recall that we write $u \in U^s$ as $u = w_N + v$.
For any $\lambda \in \partial D_{n-1}$ and any $n \ge n_2 > N$, one has by the resolvent identity,
\begin{equation}\label{94ter}
(L_u - \lambda)^{-1} 
=  (L_{w_N} - \lambda)^{-1} + (L_u - \lambda)^{-1} T_v (L_{w_N} - \lambda)^{-1}.
\end{equation}
Since $P_{n-1}(u) = -\frac{1}{2\pi i} \oint\limits_{\partial D_{n-1}} (L_u - \lambda)^{-1}  d\lambda$
and $Sf_{n-1} = f_n$, it then follows by Cauchy's theorem that
$$
 -\frac{1}{2\pi i} \oint\limits_{\partial D_{n-1}} S(L_{w_N} - \lambda)^{-1} f_{n-1} d\lambda =
  -\frac{1}{2\pi i} \oint\limits_{\partial D_{n-1}} \frac{1}{n-1 - \lambda}  d\lambda \ f_n = f_n
$$
and hence by \eqref{94bis} and \eqref{94ter},
$$
\delta_n(u) =  -\frac{1}{2\pi i} \oint\limits_{\partial D_{n-1}}  \1 P_n(u) S (L_u - \lambda)^{-1} T_v (L_{w_N} - \lambda)^{-1} f_{n-1} | f_n \2  d\lambda .
$$
Expanding  
\begin{equation}\label{96bis}
(L_u - \lambda)^{-1}  = (L_{w_N} - \lambda)^{-1}({\rm Id} - T_v (L_{w_N} - \lambda)^{-1})^{-1}
\end{equation}
in the formula above in terms of the Neumann series
for $({\rm Id} - T_v (L_{w_N} - \lambda)^{-1})^{-1}$ and writing $P_n$ for $P_n(u)$, one obtains
$$
\delta_n(u) =  - \sum_{m \ge 1} \frac{1}{2\pi i} \oint\limits_{\partial D_{n-1}}  \1 P_n S (L_{w_N} - \lambda)^{-1} (T_v (L_{w_N} - \lambda)^{-1})^m f_{n-1} | f_n \2  d\lambda .
$$
By expressing $(T_v (L_{w_N} - \lambda)^{-1})^m f_{n-1}$ in terms of the basis $f_n$, $n \ge 0$, one obtains
\begin{equation}\label{formula1 delta_n}
\delta_n(u) = \sum_{m\ge 1} \sum_{\tb{k_j \ge 0}{1\le j \le m}} C_{u, n-1}(n-1, k[1,m]) \1 P_n S f_{k_m}  | f_n \2 ,
\end{equation}
where 
$$
k[1, m] := k_1, \ldots, k_m .
$$
The coefficients $C_{u, n-1}(n-1, k[1,m])$ can be computed as follows.
For $m=1$ one has 
$$
\begin{aligned}
 (L_{w_N} - \lambda)^{-1} T_v (L_{w_N} - \lambda)^{-1} f_{n-1}
  = \sum_{k_1 \ge 0} \frac{\1 vf_{n-1} | f_{k_1} \2}{(n-1 - \lambda)(\lambda_{k_1} - \lambda)} f_{k_1} ,
 \end{aligned}
 $$
 where $\lambda_{k_1} \equiv  \lambda_{k_1}(w_N)$ and where we used that $\lambda_{n-1} = n-1$. Hence
 $$
 \begin{aligned}
 - \frac{1}{2\pi i} \oint\limits_{\partial D_{n-1}} & \1 P_n S (L_{w_N} - \lambda)^{-1} T_v (L_{w_N} - \lambda)^{-1} f_{n-1} | f_n \2  d\lambda \\
& =  \sum_{k_1 \ge 0} C_{u, n-1}(n-1, k_1) \1 P_n S f_{k_1}  | f_n \2
 \end{aligned}
 $$
where
$$
C_{u, n-1}(n-1, k_1) = -  \frac{1}{2\pi i} \oint\limits_{\partial D_{n-1}}   \frac{\1 vf_{n-1} | f_{k_1} \2}{(n-1 - \lambda)(\lambda_{k_1} - \lambda)} d \lambda  .
$$ 
Next let us compute the term $m=2$ in \eqref{formula1 delta_n}. One has
$$
(L_{w_N} - \lambda)^{-1} (T_v (L_{w_N} - \lambda)^{-1})^2 f_{n-1}
=(L_{w_N} - \lambda)^{-1}\!\!\sum_{\tb{k_j \ge 0}{j=1,2}}
\frac{\1 vf_{n-1} | f_{k_1} \2 \1 vf_{k_1} | f_{k_2} \2 }{(n-1 - \lambda)(\lambda_{k_1} - \lambda)} f_{k_2},
$$
yielding
 $$
 \begin{aligned}
& - \frac{1}{2\pi i} \oint\limits_{\partial D_{n-1}}  \1 P_n S (L_{w_N} - \lambda)^{-1} (T_v (L_{w_N} - \lambda)^{-1})^2 f_{n-1} | f_n \2  d\lambda \\
& = \sum_{\tb{k_j \ge 0}{j=1,2}} C_{u, n-1}(n-1, k_1, k_2) \1 P_n S f_{k_2}  | f_n \2
 \end{aligned}
 $$
where 
$$
C_{u, n-1}(n-1, k_1, k_2) =
  -  \frac{1}{2\pi i} \oint\limits_{\partial D_{n-1}}   
  \frac{\1 vf_{n-1} | f_{k_1} \2 \1 vf_{k_1} | f_{k_2} \2 }{(n-1 - \lambda)(\lambda_{k_1} - \lambda) (\lambda_{k_2} - \lambda)} d \lambda \, .
$$
Arguing similarly as in the cases $m=1$ and $m=2$,  for any $m \ge 1$ and $(k_j)_{1 \le j \le m}$, 
the coefficient $C_{u, n-1}(n-1, k[1,m])$  can be computed as
$$
C_{u, n-1}(n-1, k[1,m]) =
  -  \frac{1}{2\pi i} \oint\limits_{\partial D_{n-1}}   
  \frac{ \1 vf_{n-1} | f_{k_1} \2 \cdots  \1 vf_{k_{m-1}} | f_{k_m} \2}{(n-1 - \lambda)\prod_{j=1}^m( \lambda_{k_j} - \lambda)}  d \lambda .
$$
By passing to the variable $\mu:= \lambda -n + 1$ in the contour integral above we get
$$
C_{u, n-1}(n-1, k[1,m]) = A(0, k_{\lambda; n-1}[1, m]) B_u(n-1, k[1,m])
$$
where for any $\ell \ge 1$, $k_j \in \Z$, $1\le j \le \ell$,
$$
B_u(k[1,\ell]) :=   \1 vf_{k_1} | f_{k_2} \2  \1 vf_{k_2} | f_{k_3} \2  \cdots  \1 vf_{k_{\ell-1}} | f_{k_\ell} \2
$$
and any $z_1, \ldots, z_{\ell}$ with $z_j \in \C \setminus \partial D_0,$
\begin{equation}\label{def A(...)}
A(z_1, \ldots, z_\ell) :=  -  \frac{1}{2\pi i} \oint\limits_{\partial D_{0}} \frac{d \mu}{\prod_{j=1}^\ell (z_j - \mu)}  
\end{equation}
and 
$$
k_{\lambda}[1, m] := \lambda_{k_1}, \ldots, \lambda_{k_m} \, , \qquad
k_{\lambda; j}[1, m] :=   \lambda_{k_1} -j, \ldots, \lambda_{k_m} -j \, .
$$
For the sequel, it is convenient to define the corresponding sequences
when $[1, m]$ is replaced by $(1, m]$, $(1, m)$, \ldots , 
$$
k(1, m] := k[2,m],  \qquad k(1, m) := k[2,m-1] , \qquad \ldots \ \ \  .
$$
Altogether, we obtain the following formula,
\begin{equation}\label{formula2 delta_n}
\delta_n(u) = \sum_{m \ge 1}  \sum_{\tb{k_j \ge 0}{1\le j \le m}}A(0, k_{\lambda; n-1}[1, m]) B_u(n-1, k[1,m]) \1 P_n S f_{k_m}  | f_n \2 .
\end{equation}
Now let us turn to the expression $\1 P_n S f_{k_m}  | f_n \2$. An expansion of $P_n$ corresponding to the one of $P_{n-1}$ yields
\begin{equation}\label{100bis}
\1 P_n S f_{k_m}  | f_n \2 =
- \sum_{r \ge 0}  \frac{1}{2\pi i} \oint\limits_{\partial D_{n}}  \1  (L_{w_N} - \lambda)^{-1} (T_v (L_{w_N} - \lambda)^{-1})^r S f_{k_m} | f_n \2  d\lambda \, .
\end{equation}
For the term $r=0$ in the latter series, one has 
$$
\1 (L_{w_N} - \lambda)^{-1}  S f_{k_m} | f_n \2  = \frac{\1 Sf_{k_m} | f_n \2}{n-\lambda} =  \frac{\1 f_{k_m} | f_{n-1} \2}{n-\lambda}
$$
where we used that $S^* f_n = e^{-ix} f_n = f_{n-1}$ since $n-1 \ge N$. Hence by Cauchy's theorem
$$
-  \frac{1}{2\pi i} \oint\limits_{\partial D_{n}} \1 (L_{w_N} - \lambda)^{-1}  S f_{k_m} | f_n \2 d \lambda = \delta_{k_m (n-1)} .
$$
To compute the terms with $r \ge 1$ in the latter series, we distinguish between the case $k_m \ge N$
and the case $k_m < N$.

\smallskip
\noindent
{\em Case $k_m \ge N$.} In this case, $S f_{k_m} = S e^{ik_m x} g_\infty = f_{k_m +1}$ and hence by \eqref{100bis}, 
\begin{align}\label{formula1 P_n etc}
\1 P_n S f_{k_m}  | f_n \2 & = \delta_{k_m( n - 1)} + C_{u,n}(k_m +1, n) \nonumber \\
& + \sum_{r\ge 2} \sum_{\tb{\ell_j \ge 0}{1 \le j \le r}} C_{u,n}(k_m+1, \ell[1,r], n) ,
\end{align}
where passing to the integration variable $\mu:= \lambda - n$, one sees that
$$
\begin{aligned}
 & C_{u,n}(k_m+1, n) =  -  \frac{1}{2\pi i} \oint\limits_{\partial D_{n}}   \frac{ \1 vf_{k_m + 1} | f_{n} \2}{(k_m +1 - \lambda) (n - \lambda)}  d \lambda \\
 & =  -  \frac{1}{2\pi i} \oint\limits_{\partial D_{0}}   \frac{ \1 vf_{k_m + 1} | f_{n} \2 }{(k_m +1 - n - \mu)} \frac{d \mu}{-\mu} 
 =A(k_m+1 -n, 0)B_u(k_m +1, n) .
\end{aligned}
$$
Passing from $r$ to $r-1$ as summation index in the sum \eqref{formula1 P_n etc} one gets for $r \ge 1$,
$$
C_{u,n}(k_m+1, \ell[1,r], n) = A(k_m + 1 -n, \ell_{\lambda; n}[1, r], 0) \cdot B_u(k_m+1, \ell[1,r], n) \, .
$$
Writing $k_{m+j}$ for $\ell_j$, $1 \le j \le r$, and setting $k[j , j'] := \emptyset$ for any $j > j'$, we arrive at the following formula in the case $k_m \ge N$,
\begin{align}\label{formula2 P_n etc}
&\1 P_n S f_{k_m}  | f_n \2  = \delta_{k_m (n-1)}  \, +  \\
& \sum_{r\ge 0} \sum_{\tb{k_j \ge 0}{m <  j \le m+ r}}  A(k_m + 1 -n, \, k_{\lambda; n}(m, m+ r], 0) \cdot B_u(k_m+1, \, k(m, m+r], n) \, . \nonumber
\end{align}

\smallskip
\noindent
{\em Case $k_m < N$.} For convenience we set $\alpha := k_m$. Then $0 \le \alpha < N$.
Since $\1 Sf_\alpha | f_{k+1}  \2 = \1 f_\alpha | f_k \2 = 0$ for any $k\ge N$, one has
\begin{equation}\label{formula3 P_n etc}
\1 P_n S f_{\alpha} | f_n \2 = \sum_{0 \le \beta \le N} \1 Sf_{\alpha} | f_\beta \2 \1 P_n f_\beta | f_n \2 .
\end{equation}
We now expand $\1 P_n f_\beta | f_n \2$, $0 \le \beta \le N$. Since $\1 f_\beta | f_n \2 = 0$ for any $n > N$ one infers 
from \eqref{96bis} that
$$
\1 P_n f_\beta | f_n \2 = \sum_{r \ge 0} \sum_{\tb{\ell_j \ge 0}{1 \le j \le r}} C_{u,n}(\beta, \ell[1,r], n) .
$$
For $r=0$ (recall that by definition, $\ell[1, 0] = \emptyset$)
$$
 C_{u,n}(\beta,  n) =  -  \frac{1}{2\pi i} \oint\limits_{\partial D_{n}}   \frac{ \1 vf_{\beta} | f_{n} \2}{(\lambda_\beta - \lambda) (n - \lambda)}  d \lambda
 = A(\lambda_\beta -n, 0)B_u(\beta, n) 
$$
and for any $r \ge 1$,
$$
C_{u,n}(\beta, \ell[1,r], n) =  
 -  \frac{1}{2\pi i} \oint\limits_{\partial D_{n}}   
  \frac{ \1 vf_{\beta} | f_{\ell_1} \2 \cdots  \1 vf_{\ell_r} | f_n \2}{(\lambda_\beta - \lambda)(n-\lambda)\prod_{j=1}^r( \lambda_{\ell_j} - \lambda)}  d \lambda \, .
$$
By passing to the integration variable $\mu:= \lambda - n$ in the contour integrals, one arrives at
\begin{equation}\label{formula4 P_n etc}
 \1 P_n f_\beta | f_n \2 = \sum_{r \ge 0} \sum_{\tb{\ell_j \ge 0}{1 \le j \le r}}
 A(\lambda_\beta -n, \ell_{\lambda; n}[1, r], 0) \cdot B_u(\beta, \ell[1,r], n) .
\end{equation}
For notational convenience, we suppress $u$ in quantities such as $B_u$ or $\delta_n(u)$ in the sequel.
By combining  \eqref{formula2 delta_n} and \eqref{formula2 P_n etc} - \eqref{formula4 P_n etc}, one sees that 
\begin{equation}\label{105bis}
\delta_n = I_n + II_n
\end{equation} 
where
$$
I_n :=  \sum_{m \ge 1}  \sum_{\tb{k_j \ge 0}{1\le j \le m}}A(0, k_{\lambda; n-1}[1, m]) \cdot B(n-1, k[1,m])  \delta_{k_m (n-1)} ,
$$
$$
II_n :=   \sum_{m \ge 1}  \sum_{\tb{k_j \ge 0}{1\le j \le m}}A(0, k_{\lambda; n-1}[1, m]) \cdot B(n-1, k[1,m]) \big(\1 P_n S f_{k_m}  | f_n \2 - \delta_{k_m (n-1)} \big) .
$$
We rewrite $I_n$ and $II_n$ as
\begin{equation}\label{105ter}
I_n = \delta_n^{(0)} + \mathcal R_n^{(0)} , \qquad II_n = \delta_n^{(1)} + \mathcal R_n^{(1)} ,
\end{equation}
with $\delta_n^{(0)}$, $\delta_n^{(1)}$, $\mathcal R_n^{(0)}$, and $\mathcal R_n^{(1)}$ defined as follows:
the term $\delta_n^{(0)}$ is obtained by passing from $m$ to $d:= m-1$ as summation index and using that for $d=0$, $A(0, 0)\cdot B(n-1, n-1) = 0$, 
\begin{equation}\label{delta^(0)}
 \delta_n^{(0)}  := \sum_{d \ge 1}  \sum_{\tb{k_j \ge N, 1\le j \le d}{(k_{d+1} = n-1)}}A(0, k_{\lambda; n-1}[1, d], 0) \cdot B(n-1, k[1,d], n-1) ,
\end{equation}
whereas
\begin{align}\label{delta^(1)}
 \delta_n^{(1)}  & := \sum_{\tb{m \ge 1}{r \ge 0}}  \sum_{\tb{k_j \ge 0, 1\le j \le m}{k_m \ge N}}  \sum_{\tb{k_j \ge 0}{m < j \le m+r}}
 A(0, k_{\lambda; n-1}[1, m]) \cdot B(n-1, k[1,m])  \nonumber \\
& \cdot A(k_m + 1 -n, \, k_{\lambda; n}(m, m+ r], 0) \cdot B(k_m+1, \, k(m, m+r], n) ,
\end{align}
and 
$$
\mathcal R_n^{(0)} : = \sum_{0 \le  \alpha < N} \mathcal R_{n, \alpha}^{(0)} , \qquad
\mathcal R_n^{(1)} : =   \sum_{\tb{0 \le  \alpha <N}{0 \le \beta \le N}} \1 Sf_{\alpha} | f_\beta \2 \mathcal R_{n, \alpha, \beta}^{(1)} ,
$$
with
\begin{align}\label{mathcal R^(0)}
 \mathcal R_{n, \alpha}^{(0)} & : = A(0, \lambda_\alpha -n +1, 0) \cdot B(n-1, \alpha, n-1) \\
&+ \sum_{d \ge 2} \sum_{i=1}^{d} \sum_{\tb{k_j \ge N, 1\le j <i, k_i=\alpha}{k_j \ge 0, i < j \le d}}
A(0, k_{\lambda; n-1}[1, d], 0) \cdot B(n-1, k[1,d], n-1)  \nonumber 
\end{align}
and
\begin{align}\label{mathcal R^(1)}
 \mathcal R_{n, \alpha, \beta}^{(1)}  & : = \sum_{\tb{m \ge 1}{r \ge 0}}  \sum_{\tb{k_j \ge 0,1\le j < m }{(k_m = \alpha)}}  \sum_{\tb{\ell_j \ge 0}{1\le j \le r}}
 A(0, k_{\lambda; n-1}[1, m), \lambda_\alpha - n + 1)   \nonumber \\
& \cdot B(n-1, k[1,m), \alpha) \cdot A(\lambda_\beta -n, \, \ell_{\lambda; n}[1, r], 0) \cdot B(\beta, \ell[1, r], n) .
\end{align}
We now expand $\delta_n^{(1)}$, given by \eqref{delta^(1)}, further,
\begin{equation}\label{}
\delta_n^{(1)} = \delta_n^{(2)} +  \mathcal R_{n}^{(2,1)} +  \mathcal R_{n}^{(2,2)} ,
\end{equation}
where
\begin{align}\label{delta^(2)}
 \delta_n^{(2)}  &: = \sum_{\tb{m \ge 1}{r \ge 0}}  \sum_{\tb{k_j \ge N}{ 1\le j \le m+r}}  
 A(0, k_{\lambda; n-1}[1, m]) \cdot B(n-1, k[1,m])  \nonumber \\
& \cdot A(k_m + 1 -n, \, k_{\lambda; n}(m, m+ r], 0) \cdot B(k_m+1, \, k(m, m+r], n) . 
\end{align}
The remainders are given by
$$
 \mathcal R_{n}^{(2,1)} := \sum_{0 \le \alpha < N}  \mathcal R_{n, \alpha}^{(2,1)} , \qquad
  \mathcal R_{n}^{(2,2)} := \sum_{0 \le \alpha < N}  \mathcal R_{n, \alpha}^{(2,2)}
$$
where
\begin{align}\label{mathcal R^(2,1)}
& \mathcal R_{n, \alpha}^{(2,1)}  : = 
  \sum_{m \ge 1} \sum_{i=1}^{m-1} \sum_{\tb{k_j \ge N, 1\le j <i, k_i=\alpha} {k_j \ge 0, i < j <m, k_m\ge N} }
A(0, k_{\lambda; n-1}[1, m]) \cdot B(n-1, k[1,m])   \\
&  \cdot  \sum_{r \ge 0} \sum_{\tb{k_j \ge 0}{m < j \le m+r}}A(k_m + 1 -n, \, k_{\lambda; n}(m, m+ r], 0) \cdot B(k_m+1, \, k(m, m+r], n) \nonumber
\end{align}
and
\begin{align}\label{mathcal R^(2,2)}
& \mathcal R_{n, \alpha}^{(2,2)}  : = 
  \sum_{m \ge 1} \sum_{\tb{k_j \ge N}{1 \le j \le m}} 
A(0, k_{\lambda; n-1}[1, m]) \cdot B(n-1, k[1,m])   \\
&  \cdot \sum_{r \ge 0} \sum_{i=1}^{r}   \sum_{\tb{\ell_j \ge N, 1\le j <i}{\ell_i=\alpha, \ell_j \ge 0, i < j \le r} }
A(k_m + 1 -n, \, \ell_{\lambda; n}[1,r], 0) \cdot B(k_m+1, \, \ell[1, r], n) . \nonumber
\end{align}
Combining \eqref{105bis} (definition of $\delta_n$), \eqref{105ter} (definition $I_n$, $II_n$)
with \eqref{delta^(0)} - \eqref{mathcal R^(2,2)},
 we obtain the following expansion of $\delta_n$,
\begin{equation}\label{115bis}
\delta_n =  \delta_n^{(0)}  +  \delta_n^{(2)}  + \mathcal R_n,
\end{equation}
\begin{equation}\label{115ter}
 \mathcal R_n = 
  \sum_{0 \le \alpha < N} \mathcal R_{n, \alpha}^{(0)} + \sum_{\tb{0 \le  \alpha <N}{0 \le \beta \le N}} \1 Sf_{\alpha} | f_\beta \2 \mathcal R_{n, \alpha, \beta}^{(1)} 
  + \sum_{0 \le \alpha < N} ( \mathcal R_{n, \alpha}^{(2,1)} +  \mathcal R_{n, \alpha}^{(2,2)} ).
\end{equation}
Let us now consider $ \delta_n^{(0)}$ and  $\delta_n^{(2)}$ in more detail. We begin with  $\delta_n^{(0)}$.
Introducing $\ell_j:= k_j -n +1$, $1 \le j \le d$, and using that  $f_{k} = g_\infty e^{ikx}$, $\lambda_k = k$ for any $k \ge N$ (since $w_N \in \mathcal U_N$)
and that $|g_\infty(x) | = 1$, it follows that $B(n-1, k[1,d], n-1)$ in the formula \eqref{delta^(0)} for $\delta_n^{(0)}$ is given by
$$
\begin{aligned}
& B(n-1, k[1,d], n-1) = \1 vf_{n-1} | f_{k_1} \2 \cdots \1 vf_{k_d} | f_{n-1} \2 \\
& = \widehat v(k_1 - n +1) \widehat v(k_2 - k_1) \cdots \widehat v(n-1-k_d) = G_v(0, \ell[1, d], 0)
\end{aligned}
$$
where 
$$
G_v(0, \ell[1, d], 0) := \widehat v(\ell_1) \widehat v(\ell_2 - \ell_1) \cdots \widehat v(\ell_d - \ell_{d-1}) \widehat v(-\ell_{d}).
$$
(Recall that $u = w_N + v$ and hence $G_v$ depends on $v$, but not on $w_N$.) As a consequence,
$$
 \delta_n^{(0)}  = \sum_{d \ge 1}  \sum_{\tb{\ell_j > N-n}{1\le j \le d}}A(0, \ell[1, d], 0) \cdot G_v(0, \ell[1, d], 0).
$$
Similarly, we obtain a formula for $\delta_n^{(2)}$. Introducing
$$
\ell_j:= k_j - n +1, \ \ \ 1 \le j \le m, \qquad  \ell_j:= k_j - n, \ \ \ m <  j \le m + r,
$$
$ B(n-1, k[1,m])$ and $ B(k_m+1, k(m,m+r], n)$ in \eqref{delta^(1)} can be computed as
$$
\begin{aligned}
& B(n-1, k[1,m] ) =  \1 vf_{n-1} | f_{k_1} \2 \cdots \1 vf_{k_{m-1}} | f_{m} \2 \\
&=  \widehat v(k_1 - n +1) \widehat v(k_2 - k_1) \cdots \widehat v(k_m -k_{m-1})\\
&=  \widehat v(\ell_1) \widehat v(\ell_2 - \ell_1) \cdots \widehat v(\ell_m - \ell_{m-1}) = G_v(0, \ell[1, m])
\end{aligned}
$$
and 
$$
\begin{aligned}
& B(k_m+1, k(m,m+r], n) =  \1 vf_{k_m+1} | f_{k_{m+1}} \2 \cdots \1 vf_{k_{m+r}} | f_{n} \2 \\
&=\widehat v(k_{m+1} - k_m -1) \widehat v(k_{m+2} - k_{m+1}) \cdots \widehat v(n -k_{m+r})\\
&=  \widehat v(\ell_{m+ 1} - \ell_m) \widehat v(\ell_{m+2} - \ell_{m+1}) \cdots \widehat v(-\ell_{m +r}) = G_v( \ell[m, m+r], 0) .
\end{aligned}
$$
Since 
$$
G_v(0, \ell[1, m]) \cdot G_v( \ell[m, m+r], 0) = G_v(0,  \ell[1, m+r], 0)
$$
it follows that
$$
 \delta_n^{(2)}  = \sum_{\tb{m \ge 1}{r \ge 0}}  \sum_{\tb{\ell_j > N-n,  1\le j \le m}{\ell_j \ge N-n, m< j \le m+r}}  
 A(0, \ell[1, m]) \cdot A(\ell[m, m+r], 0) \cdot G_v(0, \ell[1, m+r], 0).
$$
Arguing as in \cite[Section 5]{GKT3}, we write $  \delta_n^{(2)}  =  \delta_n^{(3)} + \mathcal R^{(3)}_n$
where (taking again into account that $\lambda_k = k$ for $k \ge N$)
$$
 \delta_n^{(3)}  = \sum_{\tb{m \ge 1}{r \ge 0}}  \sum_{\tb{\ell_j > N-n}{1\le j  \le m+r}}  
 A(0, \ell[1, m]) \cdot A(\ell[m, m+r], 0) \cdot G_v(0, \ell[1, m+r], 0).
$$
and
$$
\begin{aligned}
& \mathcal R^{(3)}_n =  \sum_{\tb{m \ge 1}{r \ge 0}}  \sum_{\tb{\ell_j > N-n}{1\le j  \le m}}  
\sum_{i=1}^r  \sum_{\tb{\ell_{m+j} > N-n, 1\le j  <i, \ell_{m+i} = N-n}{\ell_{m+j} \ge N-n, i < j \le r}}  A(0, \ell[1, m]) \\
& \cdot A(\ell[m, m+r], 0) \cdot G_v(0, \ell[1, m+r], 0).
\end{aligned}
$$
We then arrive at 
$$
\delta_n^{(0)} +  \delta_n^{(2)}= \delta_n^{(0)} + \delta_n^{(3)} +  \mathcal R_{n}^{(3)}
$$
where 
$$
 \delta_n^{(0)} + \delta_n^{(3)} = \sum_{d \ge 1} \sum_{\tb{\ell_j > N-n}{1\le j  \le d}} 
 \mathcal D(\ell[1, d])  \cdot G_v(0, \ell[1, d], 0)
$$
and $\mathcal D(\ell[1, d])$ is given by \cite[(72)]{GKT3}, 
$$
\mathcal D(\ell[1, d]) = \sum_{1 \le m \le d} \big( A(0, \ell[1, m])A( \ell[m, d], 0) + A(0, \ell[1, d], 0) \big) .
$$
(Note the change in notation when compared with \cite{GKT3}.) 
By \cite[Lemma 5.1 (Vanishing Lemma)]{GKT3}, $\mathcal D(\ell[1, d]) = 0$. Hence by \eqref{115bis}-\eqref{115ter}, one gets
\begin{equation}\label{formula5 delta_n}
\delta_n = 
  \sum_{0 \le \alpha < N} \mathcal R_{n, \alpha}^{(0)} + \sum_{\tb{0 \le  \alpha <N}{0 \le \beta \le N}} \1 Sf_{\alpha} | f_\beta \2 \mathcal R_{n, \alpha, \beta}^{(1)} 
  + \sum_{0 \le \alpha < N} ( \mathcal R_{n, \alpha}^{(2,1)} +  \mathcal R_{n, \alpha}^{(2,2)} )+  \mathcal R_{n}^{(3)}.
\end{equation}
Arguing as in the proof of \cite[Lemma 5.2]{GKT3} one sees that for some constant $C \ge 1$, only depending on $s$,
$$
\sum_{n > n_2} |  \mathcal R_{n}^{(3)} | \le C \|v\|_s^3, \qquad \forall \, u= w_N + v \in  B^s_{c,0}(w_N, 1/C_{M,s})
$$
The remaining terms in \eqref{formula5 delta_n} are estimated, using the results of Section \ref{sec:Psi}.

\smallskip

\noindent
{\em  Estimate of $\sum_{\tb{0 \le  \alpha <N}{0 \le \beta \le N}} \1 Sf_{\alpha} | f_\beta \2 \mathcal R_{n, \alpha, \beta}^{(1)}$.}
First note that
$$
|  \1 Sf_{\alpha} | f_\beta \2 | \le \frac{1}{2\pi} \int_0^{2\pi} |e^{ix}f_\alpha \overline{ f_\beta} | d x \le \|f_\alpha \| \|f_\beta\| = 1 
$$
and that  by Cauchy-Schwarz,  $\sum_{n > n_2} |\mathcal R_{n, \alpha, \beta}^{(1)}|$ (cf. \eqref{mathcal R^(1)}) can be estimated as 
$$
\sum_{n > n_2} |\mathcal R_{n, \alpha, \beta}^{(1)}| \le \sum_{m \ge 1}(I^{(1)}_{m, \alpha})^{1/2} \cdot \sum_{r \ge 0} (II^{(1)}_{r, \beta})^{1/2}
$$ 
where
$$
I^{(1)}_{m,\alpha}:= \sum_{n > n_2} \big( \sum_{\tb{k_j \ge 0, 1 \le j < m}{k_m=\alpha}}
 |A(0, k_{\lambda; n-1}[1, m), \lambda_\alpha - n + 1)| \cdot |B(n-1, k[1,m), \alpha) |  \big)^2 ,
$$
$$
II^{(1)}_{r,\beta}:=  \sum_{n > n_2} \big(  \sum_{\tb{\ell_j \ge 0}{ 1 \le j \le r}}
| A(\lambda_\beta -n, \, \ell_{\lambda; n}[1, r], 0)| \cdot |B(\beta, \ell[1, r], n)| \big)^2 .
$$
To bound $II^{(1)}_{r,\beta}$, we estimate $\sup_{\mu \in \partial D_0}$ of the absolute value of the integrand 
in the expression for $A(\lambda_\beta -n, \, \ell_{\lambda; n}[1, r], 0)$ (cf. \eqref{def A(...)}), using that
\begin{equation}\label{estimate1 1/(n-N)}
\sup_{\mu \in \partial D_0} \frac{1}{ |\lambda_\beta - n - \mu |} \le \frac{1}{n-\frac13 -N} \le C_1 \frac{1}{n} \le C_1 n^{2s} \, , \qquad \forall n > n_2
\end{equation}
$$
\sup_{\mu \in \partial D_0} \frac{1}{ |\lambda_\ell - n - \mu |} \le \frac{5}{|\ell - n | +1}, \ \ \forall \, \ell  \ge 0, \ n > n_2 .
$$
We obtain
$$
\begin{aligned}
& | A(\lambda_\beta -n, \, \ell_{\lambda; n}[1, r], 0)| \cdot |B(\beta, \ell[1, r], n)| \le C_1 n^s 
\frac{ 5^r |\1 vf_\beta | f_{\ell_1}  \2 |  \cdots  |\1 vf_{\ell_r} | f_n  \2 |   }{\prod_{j=1}^r (|\ell_j -n| +1)}\\
& \le C_1 n^s 
\frac{ 5^r | \widehat{vg_\beta \overline g_{\ell_1}}(\ell_1 - \beta) |   | \widehat{vg_{\ell_1} \overline g_{\ell_2}}(\ell_2 - \ell_1) | \cdots  
 | \widehat{v g_{\ell_r} \overline g_\infty}(n - \ell_r) |  }{\prod_{j=1}^r (|\ell_j -n| +1)} .
\end{aligned}
$$
Arguing as in the proof of Lemma \ref{key lemma} one concludes that $ \sum_{r \ge 0} (II^{(1)}_{r, \beta})^{1/2}$ is bounded by
$$
C_1 \sum_{r \ge 0} 5^r \Big( \sum_{n \ge n_2} n^{2s} \big(
  \sum_{\tb{\ell_j \ge 0}{ 1 \le j \le r}} \frac{ | \widehat{vg_\beta \overline g_{\ell_1}}(\ell_1 - \beta) |   
   \cdots   | \widehat{v g_{\ell_r} \overline g_\infty}(n - \ell_r) |  }{\prod_{j=1}^r (|\ell_j -n| +1)}
 \big)^2 \Big)^{1/2} < \infty
$$
locally uniformly for $u=w_N + v \in B^s_{c,0}(w_N, 1/C_{M,s})$. 
Similarly, one can bound $ \sum_{m \ge 1} (I_{m, \alpha})^{1/2}$.
Indeed, for any $0 \le \alpha < N$ one has 
\begin{equation}\label{estimate2 1/(n-N)}
\frac{1}{|\lambda_\alpha - \lambda|} \le \frac{1}{(n-1) - \frac13 -(N-1)} \le \frac{1}{  n -\frac 13 -N} \le C_1 n^{2s} 
\end{equation}
and hence for any $n \ge n_1$
$$
\begin{aligned}
& |A(0, k_{\lambda; n-1}[1, m), \lambda_\alpha - n + 1)| \cdot |B(n-1, k[1,m), \alpha) | \\
& \le C_1 n^s 
\frac{ 5^m | \widehat{vg_\infty \overline g_{k_1}}(k_1 - n + 1) |  
\cdots   | \widehat{v g_{k_{m-1}} \overline g_\alpha}(\alpha - k_{m-1}) |  }{\prod_{j=1}^{m-1} (|k_j -n + 1| +1)} .
\end{aligned}
$$
Again arguing as in the proof of Lemma \ref{key lemma} one concludes that $\sum_{m \ge 0} (I^{(1)}_{m, \alpha})^{1/2}$ 
is bounded by
$$
C_1\!\!\sum_{m \ge 0}\!\!5^m \big(\!\!\sum_{n \ge n_2}\!\!n^{2s} \big(
\!\!\!\sum_{\tb{k_j \ge 0}{ 1 \le j < m}}\!\!\!\frac{ | \widehat{vg_\infty \overline g_{k_1}}(k_1 - n +1) |   
...| \widehat{v g_{k_{m-1}} \overline g_\alpha}(\alpha - k_{m-1}) |  }{\prod_{j=1}^{m-1} (|k_j -n + 1| +1)}
 \big)^2 \big)^{1/2}\!<\!\infty
$$
locally uniformly for $u=w_N + v \in B^s_{c,0}(w_N, 1/C_{M,s})$. 

\smallskip

\noindent
{\em  Estimate of $\sum_{0 \le \alpha < N} \mathcal R_{n, \alpha}^{(0)}$.}
Let us first consider the term 
$$A(0, \lambda_\alpha -n +1, 0) \cdot B(n-1, \alpha, n-1)
$$ 
in \eqref{mathcal R^(0)}. 
By Cauchy-Schwarz and \eqref{estimate2 1/(n-N)},
$$
\begin{aligned}
&\sum_{n \ge n_2} |A(0, \lambda_\alpha -n +1, 0)| \cdot |B(n-1, \alpha, n-1)| \\
& \le \sum_{n \ge n_2}\frac{| \widehat{vg_\infty \overline g_{\alpha}}(\alpha - n + 1) | | \widehat{vg_\alpha \overline g_\infty }(n - 1 - \alpha) |  }{n - \frac 13 - N}\\
&\le C_1 \big( \sum_{n \ge n_2} n^{2s} | \widehat{vg_\infty \overline g_{\alpha}}(\alpha - n + 1) |^{2s} \big)^{1/2}
\big( \sum_{n \ge n_2} n^{2s} | \widehat{vg_\alpha \overline g_{\infty}}(n - 1 - \alpha) |^{2s} \big)^{1/2} \\
&\le C_2 \|v\|_s^2
\end{aligned}
$$
where $C_2$ is a constant, only depending on $s$ and $M$. (Here we used Lemma \ref{estimates g_n} to bound $g_\alpha \equiv g_\alpha(w_N)$ 
and $g_\infty \equiv g_\infty(w_N)$.)
By estimating $\sup_{\mu \in \partial D_0}$ of the absolute value of the integrand 
in the expression for $A(0, k_{\lambda; n-1}[1, d], 0)$ (cf. \eqref{def A(...)}) and  using \eqref{estimate2 1/(n-N)},
one obtains in the case $k_i = \alpha$,
$$
\begin{aligned}
&|A(0, k_{\lambda; n-1}[1, d], 0)| \cdot |B(n-1, k[1,d], n-1) | \\
& \le C_1 n^{2s} 5^{d}
 \frac{   |\widehat{vg_\infty \overline g_{k_1}}(k_1 - n + 1) | | \widehat{vg_{k_1} \overline g_{k_2}}(k_2 - k_1) |
\cdots | \widehat{vg_{k_{d}} \overline g_\infty} (n-1 - k_{d}) | }{\prod_{\tb{ 1 \le j \le d}{j \ne i}} (|k_j -n +1| +1)} ,
\end{aligned}
$$
yielding  
\begin{equation}\label{sum R_{n, alpha}^(0) }
\sum_{n \ge n_2} | \mathcal R_{n, \alpha}^{(0)} | \le C_2 \|v\|_s^2  + C_1 \sum_{d \ge 2} 5^{d} \sum_{i=1}^{d-1}
 \sum_{n\ge n_2} n^s \,  I^{(0)}_{n, \alpha}(d, i) \cdot  n^s \, II^{(0)}_{n, \alpha}(d, i)
\end{equation}
where 
$$
\begin{aligned}
& I^{(0)}_{n, \alpha}(d, i) := \sum_{\tb{k_j \ge N}{ 1 \le j < i}}  
\frac{ |\widehat{vg_\infty \overline g_{k_1}}(k_1 - n + 1) | 
\cdots | \widehat{vg_{k_{i-1}} \overline g_\alpha} (\alpha - k_{i-1}) | }{\prod_{ 1 \le j <i} (|k_j -n +1| +1)}
\end{aligned}
$$
and
$$
\begin{aligned}
& II^{(0)}_{n, \alpha}(d, i) := \sum_{\tb{k_j \ge 0}{ i < j \le d}}  
\frac{ |\widehat{vg_\alpha \overline g_{k_{i+1}}}(k_{i+1} - \alpha) | 
\cdots | \widehat{vg_{k_{d}} \overline g_\infty} (n-1 - k_{d}) | }{\prod_{ i <  j \le d} (|k_j -n +1| +1)}  \, .
\end{aligned}
$$
By Cauchy-Schwarz, 
$$
\begin{aligned}
& \sum_{n\ge n_2} n^s \,  I^{(0)}_{n, \alpha}(d, i) \cdot  n^s \, II^{(0)}_{n, \alpha}(d, i)  \\
& \le  \big( \sum_{n\ge n_2} n^{2s} \,  (I^{(0)}_{n, \alpha}(d, i))^2 \big)^{1/2} \big( \sum_{n\ge n_1} n^{2s} \,  (II^{(0)}_{n, \alpha}(d, i))^2 \big)^{1/2}
\end{aligned} 
$$
and by Lemma \ref{estimate Q} one has for any $1 \le i < d$,
$$
\begin{aligned}
 \big( \sum_{n\ge n_2} n^{2s} \,  (I^{(0)}_{n, \alpha}(d, i))^2 \big)^{1/2} & \le 
  \sup_{0 \le \beta \le N} \| \big( Q^{i-1}[ \big( (|\widehat{v g_\infty \overline{g_k}}|)_{0 \le k \le N} \big)] \big)_\beta \|_s \\
& \le  \big( 2 C_{s, 1}^3 C_{s, 2}^2  (2 + M)^{2 \eta(s)}   \|v\|_s \big)^i  ,
\end{aligned}
$$
$$
\begin{aligned}
 \big( \sum_{n\ge n_2} n^{2s} \,  (II^{(0)}_{n, \alpha}(d, i))^2 \big)^{1/2} & \le 
  \sup_{0 \le \beta \le N} \| \big( Q^{d-i}[ (\big(|\widehat{v g_k \overline{g_\infty}}|)_{0 \le k \le N} \big)] \big)_\beta \|_s \\
& \le  \big( 2 C_{s, 1}^3 C_{s, 2}^2  (2 + M)^{2 \eta(s)}   \|v\|_s \big)^{d- i+1}  ,
\end{aligned}
$$
implying that (use $\sum_{i=1}^{d-1} 1 \le d \le 2^d$)
$$
\begin{aligned}
\sum_{d \ge 2} 5^{d} \sum_{i=1}^{d-1} \sum_{n\ge n_2} n^s \,  I^{(0)}_{n, \alpha}(d, i) \cdot  n^s \, II^{(0)}_{n, \alpha}(d, i) \\
 \le \sum_{d\ge 2}  \big( 20 C_{s, 1}^3 C_{s, 2}^2  (2 + M)^{2 \eta(s)}   \|v\|_s \big)^{d+1} < \infty
 \end{aligned}
$$
uniformly for any $u= w_N + v \in B^s_{c,0}(w_N, 1/C_{M,s})$. When combined with \eqref{sum R_{n, alpha}^(0) }
we proved that for any $0 \le  \alpha <N$, $\sum_{n >n_2} | \mathcal R_{n, \alpha}^{(0)} |  < \infty$
uniformly for any $u= w_N + v \in B^s_{c,0}(w_N, 1/C_{M,s})$.

\smallskip

\noindent
{\em  Estimate of  $ \sum_{0 \le \alpha < N}  \mathcal R_{n, \alpha}^{(2,1)} $.}
Since $k_m \ge N$, the term with $m=1$ in the sum \eqref{mathcal R^(2,1)} vanishes.
Furthermore, for any  $n > n_2$ and  $k \in Z_{\ge 0}$,
$$
\sup_{\mu \in \partial D_0} \frac{1}{ |k -  n - \mu |} \le \frac{5}{|k-n| +1}, \qquad
\sup_{\mu \in \partial D_0} \frac{1}{ |k - n + 1 - \mu |} \le \frac{8}{|k-n| +1} .
$$
It then follows by \eqref{estimate2 1/(n-N)} that
$$
\sum_{n > n_2} \mathcal R_{n, \alpha}^{(2,1)}  \le C_1 \sum_{m \ge 2, r \ge 0} 8^{m + r} \sum_{i=1}^{m-1} 
\sum_{n > n_2}  n^sI^{(2,1)}_{n, \alpha}(i, m, r) \cdot n^s II^{(2,1)}_{n, \alpha}(i, m, r) 
$$
where
$$
 I^{(2,1)}_{n, \alpha}(i, m, r)  :=   \sum_{\tb{k_j \ge N, 1\le j <i}{ k_i=\alpha}} 
 \frac{|\widehat v(k_1 -n +1)| |\widehat v(k_2 - k_1)| \cdots |\widehat{vg_\infty \overline g_\alpha}(\alpha - k_{i-1})|}{\prod_{j=1}^{i-1}(|k_j -n +1| +1)} ,
$$
$$
\begin{aligned}
& I^{(2,1)}_{n, \alpha}(i, m, r) := 
  \sum_{\tb{k_i=\alpha, k_m\ge N} {k_j \ge 0, i < j <m} } \frac{ |\widehat{vg_\alpha \overline g_{k_{i+1}}}(k_{i+1} - \alpha) | 
\cdots | \widehat{vg_{k_{m-1}} \overline g_\infty} (k_m - k_{m-1}) | }{\prod_{ i <  j \le m} (|k_j -n | +1)}   \\
& \cdot \sum_{\tb{k_j \ge 0}{m < j \le m+r}}\frac{   | \widehat{vg_{\infty} \overline g_{k_{m+1}}} (k_{m+1} - (k_m+1)) |  
\cdots  | \widehat{v g_{k_{m+r}}\overline g_{\infty} } (n - (k_{m+r})) |}{(|k_m -n| +1)\prod_{ m <  j \le m+r} (|k_j -n| +1)} .
\end{aligned}
$$
By Cauchy-Schwarz,
$$
\begin{aligned}
&\sum_{n > n_2}  n^sI^{(2,1)}_{n, \alpha}(i, m, r) \cdot n^s II^{(2,1)}_{n, \alpha}(i, m, r) \\
& \le
\big(\sum_{n > n_2}  n^{2s} (I^{(2,1)}_{n, \alpha}(i, m, r))^2 \big)^{1/2}
\big( \sum_{n > n_2}  n^{2s} (II^{(2,1)}_{n, \alpha}(i, m, r))^2 \big)^{1/2} 
\end{aligned}
$$
and by Lemma \ref{estimate Q} one has for any $1 \le i < m$,
$$
\begin{aligned}
 \big( \sum_{n\ge n_2} n^{2s} \,  (I^{(2,1)}_{n, \alpha}(i, m, r))^2 \big)^{1/2} & \le 
  \sup_{0 \le \beta \le N} \| \big( Q^{i-1}[ \big( (|\widehat{v g_\infty \overline{g_k}}|)_{0 \le k \le N} \big)] \big)_\beta \|_s \\
& \le  \big( 2 C_{s, 1}^3 C_{s, 2}^2  (2 + M)^{2 \eta(s)}   \|v\|_s \big)^i  ,
\end{aligned}
$$
$$
\begin{aligned}
 \big( \sum_{n\ge n_2} n^{2s} \,  (II^{(2,1)}_{n, \alpha}(i, m, r))^2 \big)^{1/2} & \le 
  \sup_{0 \le \beta \le N} \| \big( Q^{m+ r - i}[ \big( (|\widehat{v {g_k}\overline g_\infty}|)_{0 \le k \le N}  \big)] \big)_\beta \|_s \\
& \le  \big( 2 C_{s, 1}^3 C_{s, 2}^2  (2 + M)^{2 \eta(s)}   \|v\|_s \big)^{m+r -i + 1}  .
\end{aligned}
$$
Hence, taking $d:= m+r$ and $m$ as summation indices and using that 
$$
\sum_{m=2}^d \sum_{i=1}^{m-1} 1 \le d^2 \le 2^{d+1} ,
$$
one concludes that
$$
\begin{aligned}
& \sum_{n > n_2} \mathcal R_{n, \alpha}^{(2,1)}  \le C_1 \sum_{m \ge 2, r \ge 0} 8^{m + r} \sum_{i=1}^{m-1} 
\big( 2 C_{s, 1}^3 C_{s, 2}^2  (2 + M)^{2 \eta(s)}   \|v\|_s \big)^{m+r + 1} \\
& \le C_1 \sum_{d \ge 2} \big( 3 2 C_{s, 1}^3 C_{s, 2}^2  (2 + M)^{2 \eta(s)}   \|v\|_s \big)^{m+r + 1} < \infty ,
\end{aligned}
$$
uniformly  for any $u= w_N + v \in B^s_{c,0}(w_N, 1/C_{M,s})$. 

\smallskip

\noindent
{\em  Estimate of  $ \sum_{0 \le \alpha < N}  \mathcal R_{n, \alpha}^{(2,2)} $.}
Using the same arguments as in the previous paragraph, one shows that
 $\sum_{n > n_2} \mathcal R_{n, \alpha}^{(2,2)}$
is uniformly summable for any $u= w_N + v \in B^s_{c,0}(w_N, 1/C_{M,s})$.

\medskip

\noindent 
Patrick  G\'erard, Laboratoire de Math\'ematiques d'Orsay, CNRS, \\ Universit\'e Paris--Saclay, 91405 Orsay, France, \\
email:  patrick.gerard@universite-paris-saclay.fr

\smallskip
\noindent
Thomas Kappeler, Institut f\"ur Mathematik, Universit\"at Z\"urich, \\
Winterthurerstrasse 190, 8057 Zurich, Switzerland \\
email:  thomas.kappeler@math.uzh.ch

\smallskip
\noindent
Petar Topalov, Department of Mathematics, Northeastern University, \\
567 LA (Lake Hall), Boston, MA 0215, USA \\
email:  p.topalov@northeastern.edu

\end{document}